\pgfplotsset{width=8cm,compat=1.9}
\let\oldtocsection=\tocsection
\let\oldtocsubsection=\tocsubsection
\renewcommand{\tocsection}[2]{\hspace{0em}\oldtocsection{#1}{#2}}
\renewcommand{\tocsubsection}[2]{\hspace{1em}\oldtocsubsection{#1}{#2}}
\newtheorem{Theorem}{Theorem}[section]
\newtheorem{Lemma}[Theorem]{Lemma}
\newtheorem{Corollary}[Theorem]{Corollary}
\newtheorem{Definition}[Theorem]{Definition}
\newtheorem{Remark}[Theorem]{Remark}
\newcommand{\RR}{\mathbb{R}}
\newcommand{\cont}{\mathcal{C}}
\numberwithin{equation}{section} 
\begin{document}
\title{Fractional fast diffusion with initial data a Radon measure}
\date{}
\author{Jorge Ruiz-Cases}

\address{J. Ruiz-Cases.
  Departamento de Matem\'aticas, Universidad Aut\'onoma de Madrid, 28049 Madrid, Spain and Instituto de Ciencias Matemáticas ICMAT (CSIC-UAM-UCM-UC3M),
  28049 Madrid, Spain}
\email{jorge.ruizc@uam.es}

\begin{abstract}
    We establish a complete Widder Theory for the fractional fast diffusion equation. Our work focuses on nonnegative solutions satisfying a certain integral size condition at infinity. We prove that these solutions possess a Radon measure as initial trace, and prove the existence and uniqueness of solutions originating from such initial data. The uniqueness result is the main issue. Most of its difficulty comes from the singular character of the nonlinearity. 
\end{abstract}

\subjclass[2020]{
  35A02, 
  35D30, 
  35K55, 
  35K67, 
  35R11. 
}

\keywords{Nonlocal singular diffusion, uniqueness, very weak solutions.}

\maketitle

\section{Introduction}

\subsection{Goal} The aim of this paper is to establish a one-to-one correspondence between optimal initial data and nonnegative very weak solutions with at most certain growth at infinity for the fractional fast diffusion equation
\begin{equation} \label{equation} \tag{E}
    \partial_t u + (-\Delta)^{\sigma/2}u^m = 0, \qquad m_c<m<1,
\end{equation}
with $\sigma \in (0,2)$, $N > \sigma$ and $m_c = \frac{N-\sigma}{N}$. In order to do so, we divide this issue into three natural questions.

The first one is to identify the optimal class of initial data for our equation. We understand the initial condition as the limit $\lim_{t \to 0^+} u(t)$ of a nonnegative solution in the vague topology. We address this in~\Cref{se:traces}, where we find that any nonnegative solution has indeed an initial trace which is a nonnegative Radon measure $\mu$ in a certain growth class $\mathfrak{M}_\sigma$. This result is true for any~{$0<m<1$}.

The next question concerns the existence of a solution for every nonnegative Radon measure in~$\mathfrak{M}_\sigma$. This question is answered in~\Cref{se:existence}, where we show that, if $m>m_c$, there exists a very weak solution of equation~\eqref{equation} with a measure $\mu \in \mathfrak{M}_\sigma$ as initial trace.

The final and most difficult question, whether nonnegative solutions starting from these Radon measures are unique or not, is settled in~\Cref{se:uniqueness}, again if $m>m_c$. This is the only section that requires $N>\sigma$, although we keep this hypothesis throughout the work for simplicity.

\subsection{Precedents}

The question of existence and uniqueness for nonnegative solutions of the local heat equation, $\partial_t u - \Delta u = 0$ was settled by Widder in~\cite{WidderPositiveTemperatures, WidderHeatEquation} in the one-dimensional case. The rest of the theory and the extension to more dimensions was done mainly by Aronson in several works, see~\cite{AronsonNonnegativeSolutions, AronsonWidderInversion, Krzyzanski,  WilcoxPositiveTemperatures}. 

In the case of the fractional heat equation (FHE), $\partial_t u + (-\Delta)^{\sigma/2} u = 0$, we refer to the works~\cite{BarriosPeralSoriaValdinoci2024_NonlocalWidder, BonforteSireVazquez2017_OptimalWidder, BonforteVazquezQuantitativeEstimates}. In~\cite{BarriosPeralSoriaValdinoci2024_NonlocalWidder}, uniqueness of nonnegative solutions is proven, and some classical results for the heat equation are extended to the fractional case. In~\cite{BonforteSireVazquez2017_OptimalWidder}, a complete Widder theory for the FHE is presented, although existence of initial traces was already shown in the previous paper~\cite{BonforteVazquezQuantitativeEstimates}. Existence of solutions is obtained like in the local case, using the corresponding integral representation. 

Concerning nonlinear equations in the local framework, we turn our attention to the \textit{porous medium equation}, $\partial_t u - \Delta u^m = 0$ for $m>0$. It is usually called \textit{slow diffusion equation} when $m>1$, \textit{fast diffusion equation} when $\frac{(N-2)_+}{N}<m<1$, and \textit{very fast diffusion equation} when $0<m \leq \frac{(N-2)_+}{N}$. It is useful to make these distinctions since much of the theory changes according to the range of $m$.

For the case $m>1$, the Widder theory was completed in several works~\cite{AronsonCaffarelli1983_InitialTrace, BenilanCrandallPierre1984_InitialValue, DahlbergKenigPorous, PierreUniquenessMeasure}. Pierre proved in~\cite{PierreUniquenessMeasure} uniqueness for nonnegative solutions arising from a finite measure. Then, Bénilan, Crandall and Pierre proved in~\cite{BenilanCrandallPierre1984_InitialValue} that there exists a solution with initial data a nonnegative measure satisfying certain growth conditions. Shortly after, Dahlberg and Kenig proved in~\cite{DahlbergKenigPorous} uniqueness for such solutions. Aronson and Caffarelli then stated in~\cite{AronsonCaffarelli1983_InitialTrace} that the admissible initial traces of solutions of the slow diffusion equation were specifically the initial conditions used in~\cite{BenilanCrandallPierre1984_InitialValue}, hence concluding the Widder theory for this equation.

When $m<1$, the main works concerning this theory are~\cite{DahlbergKenigFast, HerreroPierreFastRegularizing}. Herrero and Pierre established in~\cite{HerreroPierreFastRegularizing} a regularizing effect from $L^1_{\text{loc}}$ to $L^\infty_{\text{loc}}$. Here it is particularly important that $m> \frac{N-2}{N}$ when $N>2$, since it is known that for $0<m\leq \frac{N-2}{N}$ such a regularizing effect cannot hold for general~$L^1_\text{loc}$ initial data, see~\cite{BenilanCrandallPierre1984_InitialValue}. This result is used and extended by Dahlberg and Kenig in~\cite{DahlbergKenigFast}, where they present the complete Widder theory for the fast diffusion equation. In fact the equation they study in that work is $\partial_t u - \Delta \varphi(u) = 0$, where the nonlinearity $\varphi$ is a generalization of the power~$\varphi(s) \sim  s^m$ for~$\frac{N-2}{N} < m <1$. This last paper is one of the biggest sources of inspiration for the present one, and so is~\cite{DahlbergKenigGeneralizedPorous}, where the same authors work on a generalized slow diffusion equation.

In the nonlocal framework, for the \textit{fractional porous medium equation} $\partial_t u + (-\Delta)^{\sigma/2} u^m = 0$, with~$m>0$, the case of $L^1$ initial data is now well understood, see for instance~\cite{delTesoEndalEspen2017_DistUniqueness, GrilloMuratoriPunzo2020_VeryWeak, dePabloQuiros2011, dePabloQuiros2012}. That solutions starting from a Dirac delta as an initial condition are unique if $m>m_c$ was proved by Vázquez in~\cite{VazquezBarenblattnolocal}. In that same paper, he mentions that the question of uniqueness for solutions with a general Radon measure as initial condition seems to be nontrivial. Existence of weak energy solutions starting from a finite measure when $m>m_c$ is also addressed in that paper. 

With respect to the fractional slow diffusion equation, Grillo, Muratori and Punzo prove uniqueness of nonnegative weak solutions arising from a nonnegative finite Radon measure, see~\cite{GrilloMuratoriPunzo2015_Measure}. Their proof adapts to the nonlocal framework the result by Pierre in~\cite{PierreUniquenessMeasure}. They also treat the additional difficulty of having a singular weight in front of the time derivative of the equation. 

For the fractional fast diffusion equation, Bonforte and Vázquez in~\cite{BonforteVazquezQuantitativeEstimates} work with a subclass of very weak solutions, which we will call \textit{weighted solutions}, and start developing their theory, including many estimates and results useful for this work. In that same paper they show what the initial traces are for these solutions in the whole range $m>0$.

\subsection{Main results}

This paper can be regarded as a direct sequel to~\cite{BonforteVazquezQuantitativeEstimates, DahlbergKenigFast, GrilloMuratoriPunzo2015_Measure}. It is a sequel to~\cite{GrilloMuratoriPunzo2015_Measure}, since the result stated there works for the fractional slow diffusion equation and here we complete the analogous for the fractional fast diffusion equation, although using a more general concept of solution. It is a sequel to~\cite{BonforteVazquezQuantitativeEstimates} in the sense that we use and build upon the theory of weighted solutions established in that work. And it is especially a sequel to~\cite{DahlbergKenigFast}, since there the Widder theory for the local fast diffusion equation is presented, and we aim to adapt the same results to the nonlocal setting.

We start our work by defining a \textit{weighted solution} as a very weak solution satisfying a certain growth at infinity. Then, a new estimate is proven in~\Cref{re:weightedcomparison}, which gives a comparison principle, and thus uniqueness, of solutions for initial conditions that present the previously mentioned growth at infinity. This result is valid for sign-changing solutions and allows us to prove existence (with initial data a function) through an approximation. To the best of our knowledge, these results have not been previously established in the literature. 

Afterwards, in~\Cref{re:initialtraces}, the existence of initial traces is shown, which is essentially Theorem~7.2 of~\cite{BonforteVazquezQuantitativeEstimates} and is included here for the sake of completeness.

We prove existence of weighted solutions starting from a Radon measure in $\mathfrak{M}_\sigma$ in~\Cref{re:existencia}. Here we require an approximation result for solutions with initial data a measure,~\Cref{re:measurecomparison}, that will also be of relevance in the uniqueness result.

Finally, we show uniqueness of solutions. The first important result is~\Cref{re:firstuniqueness}, where most of the technical difficulties of this work can be found. This is the point where we require the restriction~${N>\sigma}$, since a requisite for our proof is the use of the Riesz potential. We prove that nonnegative weak solutions starting from a finite nonnegative measure are unique. This works for the whole range $m>0$ with very little changes, so it recovers the uniqueness result for the fractional slow diffusion equation from~\cite{GrilloMuratoriPunzo2015_Measure}. As a corollary, we prove the main result of this work,~\Cref{re:mainuniqueness}. In it, we show uniqueness of weighted solutions starting from an initial condition in~$\mathfrak{M}_\sigma$. This result is an adaptation of~\cite[Theorem 3.28]{DahlbergKenigFast} to the nonlocal framework. We avoid the need to study~$L^1$--$L^\infty$ smoothing effects like the one in~\cite{HerreroPierreFastRegularizing} and their extensions in~\cite{DahlbergKenigFast}, since in~\cite[Theorem 3.28]{DahlbergKenigFast} they used very weak solutions as an approximation, and here we will use weak solutions instead.

The uniqueness proof for the fast diffusion case is more complicated than both the linear and the slow diffusion cases and, in a way, encapsulates them both. The reason is that a certain coefficient function $\alpha$ appears, which in the linear case is $\alpha = 1$, and in the slow diffusion case $\alpha$ may be degenerate. However, in the fast diffusion case, the coefficient can exhibit both degenerate and singular behavior. 

\subsection{Outline of the paper}

In~\Cref{se:preliminaries} we introduce some required concepts and helpful tools already available in the literature. In~\Cref{se:weighted}, we define \textit{weighted solutions} and prove useful estimates that they satisfy.~\Cref{se:traces} is concerned with the existence of initial traces. In~\Cref{se:existence}, we prove existence of weighted solutions starting from an initial condition in $\mathfrak{M}_\sigma$.~\Cref{se:uniqueness} contains the main result of this paper, which is the uniqueness of nonnegative solutions. Comments and possible extensions are mentioned in~\Cref{se:extensions}.

\section{Preliminaries} \label{se:preliminaries}

\subsection{On the fractional Laplacian and fractional Sobolev spaces}

The nonlocal operator~$(-\Delta)^{\sigma/2}$, known as the \textit{fractional Laplacian} of order~$\sigma \in (0,2)$, is defined for any function~$g$ in the Schwartz class via the Fourier transform,
\begin{equation}
    \widehat{(-\Delta)^{\sigma/2} g}(\xi) = |\xi|^\sigma \widehat{g}(\xi).
\end{equation}
It can also be written by means of a hypersingular kernel, 
\begin{equation} \label{eq:fractionallaplacian}
    (-\Delta)^{\sigma/2} g(x) = C_{N,\sigma} \text{P.V.} \int_{\RR^N} \frac{g(x) - g(y)}{|x-y|^{N+\sigma}}\, dy, \qquad C_{N,\sigma} = \frac{2^{\sigma-1}|\sigma| \Gamma((N+\sigma)/2)}{\pi^{N/2} \Gamma(1 - \sigma/2)},
\end{equation}
where P.V.\ stands for \textit{principal value}. The constant $C_{N,\sigma}$ can be found in \cite{Steinconstant}, for instance. The natural energy space that appears when working with weak solutions associated to this operator is the \textit{fractional homogeneous Sobolev space} $\dot{H}^{\sigma/2}(\RR^N)$, defined as the completion of $\cont^\infty_c(\RR^N)$ with the seminorm
\begin{equation}
    [\phi]_{\dot{H}^{\sigma/2}} = \left( \int_{\RR^N} |\xi|^{\sigma} |\widehat{\phi}|^2 \right)^{1/2} = \|(-\Delta)^{\sigma/4} \phi\|_2.
\end{equation}
The corresponding bilinear form can be expressed as
\begin{equation}
    \mathcal{E}(\phi,\psi) = \int_{\RR^N}(-\Delta)^{\sigma/4} \phi (-\Delta)^{\sigma/4} \psi = \frac{C_{N,\sigma}}{2} \int_{\RR^N}\int_{\RR^N} \frac{(\phi(x)-\phi(y)) (\psi(x) - \psi(y))}{|x-y|^{N+\sigma}}\, dxdy;
\end{equation}
hence, an alternative way to write the $\dot{H}^{\sigma/2}(\RR^N)$ seminorm is
\begin{equation}
    \|(-\Delta)^{\sigma/4} \phi\|_2 = \overline{\mathcal{E}}^{1/2}(\phi) = \frac{C_{N,\sigma}}{2} \left( \int_{\RR^N}\int_{\RR^N} \frac{(\phi(x)-\phi(y))^2}{|x-y|^{N+\sigma}}\, dxdy \right)^{1/2}. 
\end{equation}
A useful inequality to bear in mind for every $\phi \in \dot{H}^{\sigma/2}(\RR^N)$ is the Hardy-Littlewood-Sobolev inequality
\begin{equation} \label{eq:hlsinequality}
    \|\phi\|_{ L^{\frac{2N}{N-\sigma} }(\RR^N)} \leq C [\phi]_{\dot{H}^{\sigma/2}(\RR^N)} \quad \text{ if } N > \sigma,
\end{equation}
where the constant depends only on $N$ and $\sigma$. In fact, elements of the fractional homogeneous Sobolev space $\dot{H}^{\sigma/2}(\RR^N)$ are $L^{\frac{2N}{N-\sigma}}$ functions with finite $\dot{H}^{\sigma/2}$ seminorm.

\subsection{On the concepts of solution}

Let us start with the most studied concept of solution, which is the standard \textit{weak solution}. Let $Q = \RR^N \times (0, \infty)$ and let $L^p_\vartheta = L^p(\RR^N, \vartheta \,dx)$ for a weight function~$\vartheta$. 

\begin{Definition} \label{def:weaksolutions}
    A \textit{weak solution} or \textit{weak $L^1$ energy solution} of equation~\eqref{equation} is a function ${u \in \cont((0,\infty) \! : \! L^1(\RR^N))}$ with ${u^m \in L^2_\text{loc}((0, \infty) \! : \! \dot{H}^{\sigma/2}(\RR^N))}$ such that 
    \begin{equation}\label{weakuation}
        \int_0^\infty \int_{\RR^N} u \partial_t \zeta \, dxdt - \int_0^\infty \int_{\RR^N} (-\Delta)^{\sigma/4}u^m (-\Delta)^{\sigma/4} \zeta \, dxdt = 0
    \end{equation}
    for every test function $\zeta \in \cont^\infty_c(Q)$.
\end{Definition}

Existence, uniqueness and most of the basic theory for solutions with initial data~${u_0 \in L^1(\RR^N)}$ have already been developed, see~\cite{dePabloQuiros2012}. Another concept of solution is that of \textit{very weak solution}.

\begin{Definition} \label{def:veryweaksolutions}
    A \textit{very weak solution} of equation~\eqref{equation} is a function ${u \in L^1_\text{loc}((0,\infty) \! : \! L^1_\text{loc}(\RR^N))}$ with ${u^m \in L^1_\text{loc}((0, \infty) \! : \! L^1_\rho)}$, where $\rho(x) = (1+|x|^2)^{-(N+\sigma)/2}$, such that
    \begin{equation}\label{veryweakuation}
        \int_0^\infty \int_{\RR^N} u \partial_t \zeta \, dxdt - \int_0^\infty\int_{\RR^N} u^m (-\Delta)^{\sigma/2}\zeta \,dxdt = 0
    \end{equation}
    for every test function $\zeta \in \cont^\infty_c(Q)$.
\end{Definition} 

Notice that weak solutions are also very weak solutions thanks to~\eqref{eq:hlsinequality}. They also satisfy the following identity,
\begin{equation} \label{eq:veryweakwithinitialdata}
    \int_{\RR^N} u(x,T) \xi(x,T) \,dx -  \int_{\RR^N} u(x,\tau) \xi(x,\tau) \,dx = \int_\tau^T \int_{\RR^N} \big(u \partial_t \xi - u^m (-\Delta)^{\sigma/2}  \xi\big) \, dx dt,
\end{equation}
for almost every $\tau, T>0$, with $\xi \in C^\infty_c(Q_T^\tau)$, where $Q_T^\tau = \RR^N \times [\tau, T]$. It will be particularly useful throughout this work to consider the previous expression for a test function without time dependence, that is,
\begin{equation}\label{eq:salto_tiempo}
    \int_{\RR^N} (u(x, t+h)-u(x, t)) \phi(x) \,dx = - \int_t^{t+h} \int_{\RR^N}  u^m(x,s) (-\Delta)^{\sigma/2}\phi(x)  \,dx ds,
\end{equation}
for every $\phi \in \cont^\infty_c(\RR^N)$ and almost every $t, h \geq 0$. 
 
Regarding boundedness of solutions, under the condition that $m>m_c$, weak solutions satisfy an~$L^1$--$L^\infty$ \textit{smoothing effect}, see~\cite{dePabloQuiros2012}.

\begin{Theorem}[Smoothing effect]\label{re:smoothing_1}
    Let $m>m_c$. Assume $u$, a weak solution of~\eqref{equation}, verifies $\lim_{t\to 0}u(t) = u_0$ in $L^1(\RR^N)$. Then,
    \begin{equation} \label{eq:smoothingeq}
        \|u(t)\|_\infty \leq C t^{-\alpha}\|u_0\|_{1}^{\gamma} \quad \text{ for all } t>0,
    \end{equation}
    with $\alpha = N /(N(m-1)+\sigma)$ and $\gamma = \sigma \alpha / N$, where the constant $C$ depends only on $m, N$, and $\sigma$.
\end{Theorem}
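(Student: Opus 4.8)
The plan is to derive the estimate by the classical energy/Moser iteration scheme: equation~\eqref{equation} enjoys an $L^1$--$L^\infty$ smoothing effect precisely in the supercritical range $m>m_c$, and the sharp exponents $\alpha,\gamma$ are then dictated by its scaling invariance. Since $u_0\in L^1(\RR^N)$ and $u(t)\to u_0$ in $L^1$ as $t\to0^+$, I would first record that weak solutions are $L^1$-nonexpansive and, because $m>m_c$, mass preserving, so that letting $\tau\to0^+$ gives $\|u(t)\|_1\le\|u_0\|_1$ for every $t>0$; this follows from~\eqref{eq:salto_tiempo} with an increasing family of cutoffs tending to $1$, using $u^m\in L^1_\rho$ and the decay of $(-\Delta)^{\sigma/2}$ of those cutoffs.

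Next come the energy inequalities. For $p>1$, testing (a regularized form of) the equation against $p\,u^{p-1}$ --- which is legitimate after truncating $u$ and localizing in space, and then removing the truncation --- yields
\[
\frac{d}{dt}\int_{\RR^N} u^p\,dx \;=\; -\,p\,\mathcal E\big(u^{p-1},u^m\big).
\]
The nonlocal Stroock--Varopoulos inequality bounds $\mathcal E(u^{p-1},u^m)$ from below by $c_{p,m}\,\big[u^{(p+m-1)/2}\big]_{\dot H^{\sigma/2}}^2$ with $c_{p,m}=\tfrac{4m(p-1)}{(m+p-1)^2}>0$ (note $p+m-1>m>0$), and then the Hardy--Littlewood--Sobolev inequality~\eqref{eq:hlsinequality} applied to $\phi=u^{(p+m-1)/2}$ gives
\[
\frac{d}{dt}\,\|u(t)\|_p^{p} \;\le\; -\,c\,\big\|u(t)\big\|_{r_p}^{p+m-1},\qquad r_p:=\frac{(p+m-1)\,N}{N-\sigma}.
\]
The decisive point is that $r_p>p$ \emph{exactly} when $m>m_c$, since $r_p/p\ge mN/(N-\sigma)$ and $mN/(N-\sigma)>1\iff m>m_c$: there is a genuine gain of integrability at every level $p\ge1$.

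From here the argument is the standard Moser iteration. Each $\|u(t)\|_q$, $1\le q<\infty$, is nonincreasing by the displayed inequality; integrating that inequality over $[t/2,t]$ at a level $p_n$ and using this monotonicity produces an $L^{p_n}$--$L^{r_{p_n}}$ smoothing estimate, and iterating along $p_{n+1}:=r_{p_n}$ --- a recursion on the exponents that diverges to $+\infty$ precisely because $m>m_c$ --- while keeping the product of the constants convergent, one reaches $\|u(t)\|_\infty\le C\,t^{-a}\|u_0\|_1^{b}$ for some admissible exponents $a,b$. (The initial passage $L^1\to L^{p_0}$ for a $p_0$ close to $1$ needs a small separate argument, since $c_{p,m}\to0$ as $p\to1$.) The sharp values of $a$ and $b$ are then forced by homogeneity: applying this bound to the two-parameter family of rescaled solutions $u_{A,B}(x,t)=A\,u(Bx,\,A^{m-1}B^{\sigma}t)$, which all solve~\eqref{equation} and satisfy $\|u_{A,B}(0)\|_1=A\,B^{-N}\|u_0\|_1$, and matching the powers of $A$ and $B$ on both sides, yields $a=\alpha=N/(N(m-1)+\sigma)$ and $b=\gamma=\sigma\alpha/N$.

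I expect the main obstacle to be the rigorous justification of the energy identity for a merely weak $L^1$ energy solution posed on all of $\RR^N$: approximating $u$ by bounded, integrable solutions with integrable $u^m$, legitimizing the test function $u^{p-1}$ and the use of the Stroock--Varopoulos inequality in the nonlocal setting, and controlling the tail contributions created by the nonlocality of $(-\Delta)^{\sigma/2}$; the subsequent iteration is routine. Since all of this is already carried out in~\cite{dePabloQuiros2012}, a shorter route is to verify that our weak solutions fall within that framework and invoke the result directly.
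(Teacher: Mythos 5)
The paper does not prove this theorem but states it as a known result with a citation to \cite{dePabloQuiros2012}, and your proposal explicitly ends by saying that the shortest route is to verify the hypotheses and invoke that reference directly, so you are ultimately taking the same approach. Your sketch of the intervening Moser iteration (Stroock--Varopoulos plus Hardy--Littlewood--Sobolev to gain integrability at each level, with the exponents $\alpha,\gamma$ pinned down by the scaling $u\mapsto Au(Bx,A^{m-1}B^{\sigma}t)$) is a correct outline of the standard argument carried out in that reference.
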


\subsection{On the Radon measures}

A signed measure $\mu$ is called a \textit{Radon measure} if it is Borel regular and locally finite. The space of Radon measures on $\RR^N$, denoted by $\mathfrak{M}(\RR^N)$, is the dual space of~$\cont_c(\RR^N)$. Inspired by this fact, a natural convergence in~$\mathfrak{M}(\RR^N)$ is the weak* convergence, defined by the rule $\mu_n \rightharpoonup \mu$ if and only if
\begin{equation}
    \lim_{n \to \infty} \int_{\RR^N} \phi \, d\mu_n = \int_{\RR^N} \phi \, d\mu_n
\end{equation}
for any $\phi \in \cont_c(\RR^N)$. This type of convergence is also called \textit{vague convergence}. The associated topology is described as $\sigma(\mathfrak{M}(\RR^N) , \cont_c(\RR^N))$ and is known as the \textit{vague topology}.

Let us denote by $\mathfrak{M}^+(\RR^N)$ the set of nonnegative Radon measures on $\RR^N$. We will need the following compactness result for nonnegative measures, see for instance~\cite[Theorem 0.6]{LandkofPotentialbook}. 

\begin{Theorem} \label{re:measurecompactness}
    Let $\{\mu_n\}_n \subset \mathfrak{M}^+(\RR^N)$ satisfy
    \begin{equation}
        \sup_n \mu_n(K) < \infty \text{ for any compact set } K\subset \RR^N.
    \end{equation}
    Then there exists a subsequence $\{\mu_{n_k}\}_k$ and $\mu \in \mathfrak{M}^+(\RR^N)$ such that $\mu_{n_k} \rightharpoonup \mu$.
\end{Theorem}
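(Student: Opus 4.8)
The plan is to reduce the statement to the sequential Banach--Alaoglu theorem applied on an exhausting sequence of compact sets, followed by a diagonal extraction and a gluing argument. Since $\RR^N$ is $\sigma$-compact, I would fix the closed balls $B_j = \overline{B(0,j)}$, so that $\RR^N = \bigcup_{j} B_j$. For each fixed $j$, the hypothesis yields $c_j := \sup_n \mu_n(B_j) < \infty$, hence the restrictions $\mu_n|_{B_j}$ form a norm-bounded sequence in $\mathfrak{M}(B_j)$, which by the Riesz representation theorem is isometrically the dual of the separable Banach space $\cont(B_j)$. The sequential form of Banach--Alaoglu for duals of separable spaces then gives, for each $j$, a subsequence along which $\mu_n|_{B_j} \rightharpoonup \nu_j$ for some $\nu_j \in \mathfrak{M}^+(B_j)$; nonnegativity of $\nu_j$ follows at once by testing against nonnegative $\phi \in \cont(B_j)$.

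Next I would carry out the usual diagonal argument: extract nested subsequences, one refining the previous for each $j$, and pass to the diagonal subsequence $\{\mu_{n_k}\}_k$, which then converges weak-$*$ on every $B_j$ simultaneously. The family $\{\nu_j\}_j$ is consistent, since for $i < j$ the restriction of $\nu_j$ to $B_i$ and $\nu_i$ are both weak-$*$ limits of $\mu_{n_k}|_{B_i}$ when tested against functions supported in the interior of $B_i$, and a routine approximation extends the agreement to all of $\cont(B_i)$. Consequently there is a well-defined Borel measure $\mu$ on $\RR^N$ with $\mu|_{B_j} = \nu_j$ for all $j$; it is nonnegative and Borel regular, and it is locally finite because $\mu(B_j) = \nu_j(B_j) \leq \liminf_k \mu_{n_k}(B_j) \leq c_j < \infty$ by lower semicontinuity of the mass under weak-$*$ convergence. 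Hence $\mu \in \mathfrak{M}^+(\RR^N)$.

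To conclude that $\mu_{n_k} \rightharpoonup \mu$ in the vague sense, I would take an arbitrary $\phi \in \cont_c(\RR^N)$, note that $\operatorname{supp}\phi \subset B_j$ for some $j$, and write
\begin{equation*}
    \int_{\RR^N} \phi \, d\mu_{n_k} = \int_{B_j} \phi \, d\big(\mu_{n_k}|_{B_j}\big) \longrightarrow \int_{B_j} \phi \, d\nu_j = \int_{\RR^N} \phi \, d\mu \qquad \text{as } k \to \infty,
\end{equation*}
which is precisely the required convergence in $\sigma(\mathfrak{M}(\RR^N), \cont_c(\RR^N))$.

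The only genuine care points in this otherwise classical argument are the two bookkeeping steps: making the sequential Banach--Alaoglu extraction legitimate --- this is where separability of $\cont(B_j)$ is essential, so that weak-$*$ compactness is also \emph{sequential} compactness --- and verifying the consistency of $\{\nu_j\}_j$ carefully, which needs the small approximation argument above because a function continuous on $B_i$ need not extend to a test function on $B_j$ vanishing near $\partial B_i$. Neither step is deep, and for this reason the result is simply quoted in the text from the potential-theory reference \cite{LandkofPotentialbook}.
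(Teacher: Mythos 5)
Your overall strategy --- exhaust $\RR^N$ by closed balls, apply sequential Banach--Alaoglu to the restrictions on each ball, diagonalize, and glue --- is the standard proof of this compactness result; the paper itself gives no proof and simply cites Landkof. However, your gluing step contains a genuine error. You claim that for $i<j$ the measures $\nu_i$ and $\nu_j|_{B_i}$, which agree when tested against continuous functions supported in the interior of $B_i$, must agree on all of $\cont(B_i)$ by ``a routine approximation.'' This is false: two finite Borel measures on $B_i$ that assign the same integral to every continuous function vanishing on $\partial B_i$ may still differ by a measure carried by the sphere $\partial B_i$. Concretely, take $\mu_n = \delta_{x_n}$ with $x_n \to x_0 \in \partial B_i$ and $|x_n| > i$ for all $n$: then $\mu_n|_{B_i} \equiv 0$, so $\nu_i = 0$, while $\mu_n|_{B_j} \rightharpoonup \delta_{x_0}$, so $\nu_j|_{B_i} = \delta_{x_0} \neq \nu_i$. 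Hence the family $\{\nu_j\}_j$ need not be consistent in the sense you assert, and a Borel measure $\mu$ with $\mu|_{B_j} = \nu_j$ for every $j$ need not exist.

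The repair is short and standard. Your diagonal subsequence already guarantees that $\Lambda(\phi) := \lim_{k} \int \phi \, d\mu_{n_k}$ exists for every $\phi \in \cont_c(\RR^N)$: choose $j$ with $\operatorname{supp}\phi$ contained in the open ball $B(0,j)$ and use the weak-$*$ convergence of $\mu_{n_k}|_{B_j}$. Then $\Lambda$ is a positive linear functional on $\cont_c(\RR^N)$, and the Riesz representation theorem produces a nonnegative Radon measure $\mu$ with $\Lambda(\phi) = \int \phi\, d\mu$, which is precisely the asserted vague convergence; local finiteness follows by testing against cut-offs $0 \leq \phi \leq 1$ equal to $1$ on $B_j$ and supported in $B_{j+1}$, which gives $\mu(B_j) \leq c_{j+1}$. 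Equivalently, you could keep your construction but only claim consistency of the $\nu_j$ on the open balls, which is all that your final verification actually uses. As written, though, the identity $\mu|_{B_j} = \nu_j$ is unjustified and in general false, so the construction of $\mu$ needs to be rerouted as above.
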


\subsection{Potential theory and capacity}
Most of the theory and results in this subsection come from~\cite{LandkofPotentialbook}. We include them here for the reader's convenience. Recall that we will always assume~${N>\sigma}$. 

Let us define the \textit{Riesz potential} as
\begin{equation}
    \mathcal{I}_\sigma(x) = \frac{C_{N,-\sigma}}{|x|^{N-\sigma}},
\end{equation}
where $C_{N, -\sigma}$ is as in~\eqref{eq:fractionallaplacian}. We have that $\mathcal{I}_\sigma \in L^{\frac{N}{N-\sigma}, \infty}(\RR^N)$, the weak $L^p$ space. Define the potential of a function $u$ as $U^u_\sigma(x) = (\mathcal{I}_\sigma * u)(x)$. Note that if $u$ is regular enough, it holds that~${(-\Delta)^{\sigma/2}U^u_\sigma = u}$. The \textit{Riesz potential of order $\sigma$} of a signed measure $\nu$, or simply, the \textit{potential} of $\nu$ is
\begin{equation}
    U^\nu_\sigma(x)  = C_{N,-\sigma}\int_{\RR^N} \frac{d\nu(y)}{|x-y|^{N-\sigma}},
\end{equation}
formally the convolution $\mathcal{I}_\sigma * \nu$. 

The following result will be of help when proving uniqueness. 
\begin{Theorem} \label{re:potentialuniqueness}
    If the potential of a signed measure $\nu$ is equal to zero almost everywhere, then $\nu \equiv 0$ and consequently, $U^\nu_\alpha (x) \equiv 0$ everywhere.
\end{Theorem}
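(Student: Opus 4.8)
The plan is to exploit that the Riesz kernel $\mathcal{I}_\sigma$ is the fundamental solution of $\flap$, so that testing the vanishing potential against functions of the form $\flap \psi$, with $\psi \in \cont^\infty_c(\RR^N)$, recovers the action of $\nu$ on $\psi$. First I would fix the meaning of the hypothesis: saying that $U^\nu_\sigma$ ``is equal to zero a.e.''\ presupposes that it is defined a.e., i.e.\ that $U^{|\nu|}_\sigma(x) = \int_{\RR^N}\mathcal{I}_\sigma(x-y)\,d|\nu|(y)<\infty$ for a.e.\ $x$ and $U^{\nu^+}_\sigma = U^{\nu^-}_\sigma$ a.e.; in particular $U^\nu_\sigma \in L^1_{\mathrm{loc}}(\RR^N)$ and represents the null distribution. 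Choosing one point $x_0$ where $U^{|\nu|}_\sigma(x_0)<\infty$ and combining with the local finiteness of $|\nu|$, one also extracts the mild decay condition $\int_{\RR^N}(1+|y|)^{-(N-\sigma)}\,d|\nu|(y)<\infty$, which is where the standing assumption $N>\sigma$ enters (so that $\mathcal{I}_\sigma$ is a locally integrable kernel at all scales).

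Next, fix $\psi\in\cont^\infty_c(\RR^N)$ and set $\phi:=\flap\psi$. Then $\phi$ is smooth, bounded, and $\phi(x)=O(|x|^{-(N+\sigma)})$ as $|x|\to\infty$ (for $x$ outside $\operatorname{supp}\psi$ one has $\phi(x)=-C_{N,\sigma}\int\psi(y)|x-y|^{-(N-\sigma-\sigma)}\cdots$, which decays like $|x|^{-(N+\sigma)}$), so $\phi\in L^1(\RR^N)\cap L^\infty(\RR^N)$; moreover, on the Fourier side $\widehat{\mathcal{I}_\sigma}(\xi)=|\xi|^{-\sigma}$ for the normalization of $C_{N,-\sigma}$ in~\eqref{eq:fractionallaplacian}, whence $\mathcal{I}_\sigma * \phi=\psi$ — that is, the Riesz potential inverts $\flap$ on the Schwartz class, the same fact underlying $\flap U^u_\sigma=u$ for regular $u$. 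Since $U^\nu_\sigma=0$ a.e.\ and $\phi\in L^1$, we have $0=\int_{\RR^N}U^\nu_\sigma(x)\phi(x)\,dx=\int_{\RR^N}\big(\int_{\RR^N}\mathcal{I}_\sigma(x-y)\,d\nu(y)\big)\phi(x)\,dx$. I would then interchange the two integrations. By Tonelli this is permitted once $\int_{\RR^N}(\mathcal{I}_\sigma * |\phi|)(y)\,d|\nu|(y)<\infty$; and a direct estimate, splitting $\RR^N$ into the regions $\{|y|\le|x|/2\}$, $\{|x-y|\le|x|/2\}$ and the remainder, shows that $\mathcal{I}_\sigma * |\phi|$ is bounded, continuous and satisfies $(\mathcal{I}_\sigma * |\phi|)(x)=O(|x|^{-(N-\sigma)})$ (it decays like the potential of a point mass, since $|\phi|\in L^1$ decays fast). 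Together with the decay bound on $|\nu|$ from the first paragraph, this makes Fubini applicable, and it yields $0=\int_{\RR^N}(\mathcal{I}_\sigma * \phi)(y)\,d\nu(y)=\int_{\RR^N}\psi(y)\,d\nu(y)$. As $\psi\in\cont^\infty_c(\RR^N)$ was arbitrary, $\nu=0$; then $U^\nu_\sigma\equiv0$ everywhere, being the potential of the null measure.

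The main obstacle is precisely this exchange of integrals: one has to quantify the decay of $\mathcal{I}_\sigma * |\flap\psi|$ and, more delicately, squeeze out of the very weak hypothesis ``$U^\nu_\sigma=0$ a.e.''\ enough global control on $|\nu|$ near infinity to integrate that decaying function against $|\nu|$. The sign-changing case also requires a little care — in principle $U^{\nu^+}_\sigma$ and $U^{\nu^-}_\sigma$ could both be infinite on a common null set — but this is absorbed into the convention, stated in the first paragraph, of what it means for the potential to be well defined; alternatively one simply compares the two nonnegative potentials $U^{\nu^\pm}_\sigma$ directly and concludes $\nu^+=\nu^-$ by the same testing argument.
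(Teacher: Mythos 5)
The paper states this result without proof, citing it together with the rest of the potential--theory preliminaries from Landkof's monograph \cite{LandkofPotentialbook}; there is no in-text argument against which to compare. Your approach --- testing the vanishing potential against $\phi = (-\Delta)^{\sigma/2}\psi$ for $\psi \in \cont_c^\infty(\RR^N)$, using that $\mathcal{I}_\sigma * \phi = \psi$, and interchanging the two integrals by Fubini --- is a clean, self-contained route and the logical skeleton is sound. In particular you correctly isolate the two genuinely delicate points: (a) the convention attached to ``$U^\nu_\sigma = 0$ a.e.''\ must force $U^{|\nu|}_\sigma < \infty$ a.e.\ (otherwise $U^\nu_\sigma$ is not even defined as a locally integrable function and the hypothesis is vacuous), from which you extract the tail bound $\int_{\RR^N} (1+|y|)^{-(N-\sigma)}\,d|\nu|(y) < \infty$; and (b) the decay $(\mathcal{I}_\sigma * |\phi|)(x) = O(|x|^{-(N-\sigma)})$, obtained by the standard three-region split, which is exactly what makes Fubini applicable against $|\nu|$.

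Two small points to flag. First, there is a typo in your displayed decay estimate: the exponent should read $|x-y|^{-(N+\sigma)}$, not $|x-y|^{-(N-\sigma-\sigma)}$; the stated conclusion $\phi(x)=O(|x|^{-(N+\sigma)})$ is of course right. Second, the identity $\mathcal{I}_\sigma * \phi = \psi$ deserves a word of justification beyond the purely formal Fourier computation, since $\mathcal{I}_\sigma \notin L^1(\RR^N)$ and $\phi = (-\Delta)^{\sigma/2}\psi$ is not Schwartz (it decays only polynomially), so the convolution theorem does not apply directly. One remedy: observe $\phi \in L^1 \cap L^2$ with $|\xi|^{-\sigma}\widehat{\phi}(\xi) = \widehat{\psi}(\xi) \in L^1 \cap L^2$, and that for such $\phi$ the pointwise convolution $\mathcal{I}_\sigma * \phi$ agrees with the inverse Fourier transform of $|\xi|^{-\sigma}\widehat{\phi}$; or simply invoke the classical fact that $\mathcal{I}_\sigma$ is a two-sided fundamental solution of $(-\Delta)^{\sigma/2}$ on $\cont_c^\infty(\RR^N)$, the companion to the identity $(-\Delta)^{\sigma/2} U^u_\sigma = u$ already recorded in the paper. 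With that filled in, your argument is complete and gives a proof where the paper offers only a citation.
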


It is also useful to know that potentials of a measure have some regularity.
\begin{Remark} \label{re:mark}
    Potentials $U^\nu_\sigma(x)$ satisfy 
    \begin{equation}
        U^\nu_\sigma(x) = \liminf_{y \to x} U^\nu_\sigma(y),
    \end{equation}
    and therefore they are lower semicontinuous. 
\end{Remark}

We now remind the reader about the idea of \textit{capacity}. Let $K \subset \RR^N$ be a compact set and consider the set $\dot{\mathfrak{M}}^+(K)$ of nonnegative Radon measures $\mu$ whose support lies in $K$, and such that $\mu(\RR^N) = 1$. This set is convex and compact in the vague topology. Then, we define
\begin{equation}
    W_\sigma(K) = \inf_{\mu \in \dot{\mathfrak{M}}^+(K)} \left\{\int_{K\times K} \mathcal{I}_\sigma(x-y) \,d\mu(x) d\mu(y)\right\}.
\end{equation}
\begin{Definition}
    The capacity (or $\sigma$-capacity) of a compact set $K$ is 
    \begin{equation}
        C_\sigma(K) = \frac{1}{W_\sigma(K)}.
    \end{equation}
\end{Definition}
We now extend this definition to arbitrary sets.
\begin{Definition}
    For an arbitrary set $E \subset \RR^N$, we define the \textit{inner capacity} $\underline{C}_\sigma(E)$ as 
    \begin{equation}
        \underline{C}_\sigma(E) = \sup\{ C_\sigma(K) \!: K \subset E \text{ is compact} \},
    \end{equation}
    and the \textit{outer capacity} $\overline{C}_\sigma(E)$ as 
    \begin{equation}
        \overline{C}_\sigma(E) = \inf \{ \underline{C}_\sigma(G)\!  : G \supset E \text{ is open} \}.
    \end{equation}
\end{Definition}
With this in mind, let us define a particular set of measures that will be helpful to us.
\begin{Definition}
    A \textit{C-absolutely continuous measure} $\mu$ is a measure such that if a set $E$ has inner capacity zero, then $\mu(E) = 0$.  
\end{Definition}
Hence, when integrating against C-absolutely continuous measures, one may ignore any set with inner capacity zero. A useful criterion to determine whether a measure is C-absolutely continuous is as follows. 
\begin{Lemma} \label{re:criteriocabsolutely}
    If $U^\mu_\sigma(x) < \infty$ for all $x\in \RR^N$, then the measure $\mu$ is C-absolutely continuous.
\end{Lemma}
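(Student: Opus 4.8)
The strategy is to prove directly that every compact set $K$ with $C_\sigma(K)=0$ satisfies $\mu(K)=0$; the full statement then follows, since any set $E$ with $\underline{C}_\sigma(E)=0$ has all its compact subsets of zero capacity, and $\mu$, being a Radon measure, is inner regular. So fix such a $K$ and suppose, aiming at a contradiction, that $\mu(K)>0$.

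The first step is to \emph{localize the potential by truncation}. Since $U^\mu_\sigma$ is lower semicontinuous by \Cref{re:mark}, each sublevel set $\{U^\mu_\sigma\le n\}$ is closed, so $K_n:=K\cap\{U^\mu_\sigma\le n\}$ is compact for every $n$; and since, by hypothesis, $U^\mu_\sigma<\infty$ everywhere, these sets increase to $K$, whence $\mu(K_n)\uparrow\mu(K)>0$. Fix $n$ with $\mu(K_n)>0$ and set $\nu:=\mu|_{K_n}$, a nonzero measure supported in $K_n$. Because the Riesz kernel $\mathcal{I}_\sigma$ is nonnegative and $\nu\le\mu$, the potentials satisfy $U^\nu_\sigma(x)\le U^\mu_\sigma(x)$ for every $x$, and in particular $U^\nu_\sigma\le n$ on $K_n$. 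Therefore the energy of $\nu$ is finite:
\[
    \int_{K_n\times K_n}\mathcal{I}_\sigma(x-y)\,d\nu(x)\,d\nu(y)=\int_{K_n}U^\nu_\sigma\,d\nu\le n\,\nu(\RR^N)=n\,\mu(K_n)<\infty.
\]

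Finally, normalizing $\lambda:=\nu/\mu(K_n)\in\dot{\mathfrak{M}}^+(K_n)$, the displayed bound gives $W_\sigma(K_n)\le\int_{K_n\times K_n}\mathcal{I}_\sigma(x-y)\,d\lambda(x)\,d\lambda(y)=n/\mu(K_n)<\infty$, so $C_\sigma(K_n)=1/W_\sigma(K_n)>0$. But $K_n\subset K$, so by monotonicity of the capacity $C_\sigma(K_n)\le C_\sigma(K)=0$, a contradiction. Hence $\mu(K)=0$, and the lemma follows. The only genuinely delicate step is the finiteness of the energy of $\nu$: this is exactly where the everywhere finiteness of $U^\mu_\sigma$ enters (to guarantee that the $K_n$ exhaust $K$) together with its lower semicontinuity (to guarantee that the $K_n$ are compact, hence legitimate supports for competitors in the definition of $W_\sigma$). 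Without either ingredient the truncation argument breaks down.
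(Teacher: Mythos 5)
The paper offers no proof of this lemma: it is one of the facts imported verbatim from Landkof's book in the preliminaries. Your truncation argument is correct and is essentially the classical proof of that fact — lower semicontinuity of $U^\mu_\sigma$ (Remark~\ref{re:mark}) makes the sublevel sets $K_n=K\cap\{U^\mu_\sigma\le n\}$ compact, everywhere-finiteness makes them exhaust $K$, and the restricted, normalized measure becomes an admissible competitor of finite energy, contradicting $W_\sigma(K)=\infty$; the reduction from arbitrary sets of inner capacity zero to compact sets via inner regularity is also the right move. The only cosmetic slip is that the normalized energy satisfies $\int\mathcal{I}_\sigma(x-y)\,d\lambda(x)\,d\lambda(y)\le n/\mu(K_n)$ rather than being equal to it, which changes nothing.
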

A property is verified \textit{quasi-everywhere} if it is satisfied in $\RR^N \setminus E$, where $E$ is a set of zero outer capacity. By definition, if $E$ has outer capacity zero, then it has inner capacity zero. Thus, if a property is satisfied quasi-everywhere, we can assume it is satisfied everywhere when integrating against a C-absolutely continuous measure. This will be of importance when applying the next result. 
\begin{Theorem} \label{re:limpotential}
    Let $\mu_n \in \mathfrak{M}^+(\RR^N)$, $\mu_n \rightharpoonup  \mu$ and
    \begin{equation} \label{eq:controlmasa}
        \lim_{r\to \infty} \int\limits_{|x|>r} \frac{d\mu_n(x)}{|x|^{N-\sigma}} = 0
    \end{equation}
    uniformly with respect to $n$. Then,
    \begin{equation}
        U^\mu_\sigma(x) = \liminf_{n\to \infty} U^{\mu_n}_\sigma(x) \quad \text{quasi-everywhere}.
    \end{equation}
\end{Theorem}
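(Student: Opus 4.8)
The plan is to split the proof into the two inequalities. The bound $U^\mu_\sigma(x)\le\liminf_{n\to\infty}U^{\mu_n}_\sigma(x)$ will hold at \emph{every} point and is the \emph{principle of descent}: since the kernel $y\mapsto\mathcal{I}_\sigma(x-y)$ is nonnegative and lower semicontinuous, I would take $\psi_j\in\cont_c(\RR^N)$ with $0\le\psi_j\uparrow\mathcal{I}_\sigma(x-\cdot)$, so that vague convergence gives $\int\psi_j\,d\mu=\lim_n\int\psi_j\,d\mu_n\le\liminf_n U^{\mu_n}_\sigma(x)$, and then let $j\to\infty$ by monotone convergence. Condition \eqref{eq:controlmasa} is not needed here; the whole difficulty lies in the reverse inequality, which can only be expected quasi-everywhere.

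For the reverse inequality the first step is to reduce to measures supported in a fixed ball. Fix $\rho>0$ and choose $R=R(\rho)\ge 2\rho$ with $\mu(\partial B_R)=0$, which rules out only countably many radii because $\mu$ is locally finite. Set $\nu_n:=\mu_n|_{\overline{B_R}}$ and $\nu:=\mu|_{\overline{B_R}}$. Then $\nu_n\rightharpoonup\nu$, all $\nu_n$ are supported in the fixed compact set $\overline{B_R}$ with $\sup_n\nu_n(\RR^N)<\infty$ (test against a fixed cutoff), and $U^\nu_\sigma\le U^\mu_\sigma$ since $\nu\le\mu$. For $|x|\le\rho$ and $|y|>R\ge 2|x|$ one has $|x-y|\ge|y|/2$, hence
\[
    0\le U^{\mu_n}_\sigma(x)-U^{\nu_n}_\sigma(x)=C_{N,-\sigma}\int_{|y|>R}\frac{d\mu_n(y)}{|x-y|^{N-\sigma}}\le 2^{N-\sigma}C_{N,-\sigma}\int_{|y|>R}\frac{d\mu_n(y)}{|y|^{N-\sigma}}=:\delta_n(R),
\]
and $\delta(R):=\sup_n\delta_n(R)\to 0$ as $\rho\to\infty$; this is exactly where \eqref{eq:controlmasa} is used.

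Next I would apply the compactly supported version, the \emph{lower envelope theorem} for the Riesz kernel: for $\nu_n\rightharpoonup\nu$ with supports in a fixed compact set, $\liminf_n U^{\nu_n}_\sigma(x)=U^\nu_\sigma(x)$ off a set $E_\rho$ with $\overline{C}_\sigma(E_\rho)=0$. To prove this I would set $v:=\liminf_n U^{\nu_n}_\sigma$, take its lower semicontinuous regularization $\hat v$, write $v=\sup_k\inf_{n\ge k}U^{\nu_n}_\sigma$, and use the fundamental convergence theorem for $\sigma$-superharmonic functions to obtain $\hat v=v$ quasi-everywhere and $\hat v=U^\lambda_\sigma$ for some $\lambda\in\mathfrak{M}^+(\RR^N)$ by Riesz representation; the descent inequality yields $U^\nu_\sigma\le v$, and comparing Riesz measures together with the domination and uniqueness properties of potentials (\Cref{re:potentialuniqueness}) forces $\lambda=\nu$ — alternatively one simply quotes this from \cite{LandkofPotentialbook}. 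Combining the last two estimates, for $x\in\overline{B_\rho}\setminus E_\rho$,
\[
    \liminf_{n\to\infty}U^{\mu_n}_\sigma(x)\le\liminf_{n\to\infty}U^{\nu_n}_\sigma(x)+\delta(R)=U^\nu_\sigma(x)+\delta(R)\le U^\mu_\sigma(x)+\delta(R).
\]

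Finally I would let $\rho=\rho_k\to\infty$ and set $E:=\bigcup_k E_{\rho_k}$, which has zero outer capacity by countable subadditivity of $\overline{C}_\sigma$; for $x\notin E$ one has $x\in\overline{B_{\rho_k}}$ for all large $k$, so $\liminf_n U^{\mu_n}_\sigma(x)\le U^\mu_\sigma(x)+\delta(R(\rho_k))\to U^\mu_\sigma(x)$, and with the descent inequality this gives equality quasi-everywhere. The hard part is the compactly supported lower envelope theorem: pinning down the exceptional set where $\liminf_n U^{\nu_n}_\sigma$ fails to be lower semicontinuous and identifying its regularization with $U^\nu_\sigma$ rests on the fundamental convergence theorem of potential theory and is the only genuinely nontrivial ingredient. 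The principle of descent, the elementary tail cutoff driven by \eqref{eq:controlmasa}, and the patching of countably many capacity-zero sets are all routine.
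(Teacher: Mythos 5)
The paper does not actually prove \Cref{re:limpotential}: it is stated in the preliminaries as a quoted result from~\cite{LandkofPotentialbook} (it is Landkof's lower envelope theorem under the uniform tail condition), so there is no in-paper argument to compare against. Your reconstruction follows the same route as the cited source and is sound in structure: the descent inequality $U^\mu_\sigma\le\liminf_n U^{\mu_n}_\sigma$ everywhere by approximating the lower semicontinuous kernel from below is correct and indeed does not use~\eqref{eq:controlmasa}; the truncation to $\overline{B_R}$ with $\mu(\partial B_R)=0$ (using the local uniform mass bounds that vague convergence provides) and the tail estimate $|x-y|\ge|y|/2$ for $|x|\le\rho$, $|y|>R\ge2\rho$ correctly isolate where~\eqref{eq:controlmasa} enters; and the patching over $\rho_k\to\infty$ via countable subadditivity of outer capacity is fine. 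The one soft spot is the compact-support lower envelope theorem itself: your identification of the regularized lower envelope with $U^\nu_\sigma$ ("comparing Riesz measures\dots forces $\lambda=\nu$") is gestured at rather than carried out. The clean way to close it is to note that vague convergence with a uniform mass bound gives $U^{\nu_n}_\sigma\to U^\nu_\sigma$ in $L^1_{\mathrm{loc}}$ (local uniform integrability of the kernel), hence $\liminf_n U^{\nu_n}_\sigma=U^\nu_\sigma$ a.e., and Cartan's fundamental convergence theorem upgrades the exceptional set from Lebesgue-null to capacity-zero; alternatively, Landkof's own proof avoids regularizations by testing against equilibrium measures of compact subsets of the putative exceptional set and deriving a contradiction from the reciprocity formula. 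Since you explicitly offer to quote the compact case from~\cite{LandkofPotentialbook}, exactly as the paper quotes the full statement, this does not amount to a genuine gap.
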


\begin{Remark}
    Condition~\eqref{eq:controlmasa} is satisfied trivially if there exists $M > 0$ such that $\mu_n(\RR^N) < M$ for all $n \in \mathbb{N}$.
\end{Remark}

The last result that we need is the one below.
\begin{Theorem} \label{re:potentialmonotoneconv}
    Let $\{U^{\mu_n}_\sigma\}_n$ be a monotone decreasing sequence of potentials. Then, there is a measure $\mu$ such that $\mu_n \rightharpoonup \mu$ and $\lim_{n\to \infty} U^{\mu_n}_\sigma(x) = U^{\mu}_\sigma(x)$ quasi-everywhere.
\end{Theorem}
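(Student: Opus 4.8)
The plan is to couple a compactness argument for the measures $\mu_n$ with the fact that a decreasing limit of potentials is, up to a set of zero capacity, again a potential. Since $\{U^{\mu_n}_\sigma\}_n$ decreases and is nonnegative, the pointwise limit $V(x):=\lim_n U^{\mu_n}_\sigma(x)$ exists with $0\le V\le U^{\mu_1}_\sigma$; we may assume $U^{\mu_1}_\sigma\not\equiv\infty$ (the contrary case reduces to a tail of the sequence, or is immediate). The first step is a uniform local mass bound: choosing $x_0$ with $U^{\mu_1}_\sigma(x_0)<\infty$, the crude estimate
\[
    U^{\mu_n}_\sigma(x_0)\ \ge\ C_{N,-\sigma}\int_{B_R(x_0)}\frac{d\mu_n(y)}{|x_0-y|^{N-\sigma}}\ \ge\ \frac{C_{N,-\sigma}}{R^{N-\sigma}}\,\mu_n\big(B_R(x_0)\big),
\]
together with $U^{\mu_n}_\sigma\le U^{\mu_1}_\sigma$, yields $\sup_n\mu_n(K)<\infty$ for every compact $K$. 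By~\Cref{re:measurecompactness} a subsequence satisfies $\mu_{n_k}\rightharpoonup\mu$ for some $\mu\in\mathfrak{M}^+(\RR^N)$.

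Next I would prove that this $\mu$ works. One inequality is soft: since $y\mapsto\mathcal{I}_\sigma(x-y)$ is nonnegative and lower semicontinuous (cf.~\Cref{re:mark}), vague convergence gives $V(x)=\liminf_k U^{\mu_{n_k}}_\sigma(x)\ge U^\mu_\sigma(x)$ for every $x$. For the identification I would pass the equation to the limit: for $0\le\phi\in\cont^\infty_c(\RR^N)$,
\[
    \int_{\RR^N}\phi\,d\mu_n=\int_{\RR^N}U^{\mu_n}_\sigma\,(-\Delta)^{\sigma/2}\phi\,dx,
\]
and since $|(-\Delta)^{\sigma/2}\phi(x)|\lesssim(1+|x|)^{-N-\sigma}$ while $U^{\mu_1}_\sigma(x_0)<\infty$ forces $U^{\mu_1}_\sigma(x)(1+|x|)^{-N-\sigma}\in L^1(\RR^N)$, dominated convergence gives $\int\phi\,d\mu_n\to\int V\,(-\Delta)^{\sigma/2}\phi\,dx$. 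By linearity and density this holds for every $\phi\in\cont_c(\RR^N)$, so $\mu_n\rightharpoonup\mu$ along the whole sequence and $(-\Delta)^{\sigma/2}V=\mu=(-\Delta)^{\sigma/2}U^\mu_\sigma$ in $\mathcal{D}'(\RR^N)$. Hence $W:=V-U^\mu_\sigma\ge0$ is a distributional solution of $(-\Delta)^{\sigma/2}W=0$ on $\RR^N$, thus (by interior regularity and the Liouville theorem for the fractional Laplacian) a nonnegative $\sigma$-harmonic function, thus constant; the constant is $0$ because $W\le U^{\mu_1}_\sigma$ and a genuine potential of a nonzero measure has infimum $0$ (the case $\mu_1=0$ being trivial). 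Therefore $V=U^\mu_\sigma$ almost everywhere, and, $V$ being a decreasing limit of potentials, it coincides off a set of zero capacity with its lower semicontinuous regularization $\liminf_{y\to x}V(y)$, which by~\Cref{re:mark} equals $U^\mu_\sigma$; so $V=U^\mu_\sigma$ quasi-everywhere.

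The main obstacle is precisely the passage from the almost-everywhere identity $V=U^\mu_\sigma$ to the quasi-everywhere one: the infimum of the lower semicontinuous functions $U^{\mu_n}_\sigma$ need not be lower semicontinuous, and controlling where this fails — only on a set of zero capacity — is exactly the potential-theoretic input taken from~\cite{LandkofPotentialbook} (the convergence theorems for superharmonic functions, equivalently the Riesz representation), not an exercise in measure theory. A secondary and more routine difficulty is ruling out an additive $\sigma$-harmonic term when reading $(-\Delta)^{\sigma/2}V=\mu$ as $V=U^\mu_\sigma$, handled by the squeeze $0\le V\le U^{\mu_1}_\sigma$, together with verifying the integrability of the dominating function $U^{\mu_1}_\sigma(x)(1+|x|)^{-N-\sigma}$, which follows from $U^{\mu_1}_\sigma(x_0)<\infty$ and local finiteness of $\mu_1$.
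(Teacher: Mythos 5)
First, a point of reference: the paper does not prove this statement at all. It is quoted as a preliminary from \cite{LandkofPotentialbook} (it is Landkof's convergence theorem for decreasing sequences of potentials), so there is no in-paper argument to compare yours against; your proposal has to stand on its own.

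The measure-theoretic and distributional parts of your argument are essentially sound: the local mass bound from $U^{\mu_n}_\sigma(x_0)\le U^{\mu_1}_\sigma(x_0)<\infty$, the compactness via \Cref{re:measurecompactness}, the inequality $V\ge U^\mu_\sigma$ from lower semicontinuity of the kernel under vague convergence, and the identity $\int\phi\,d\mu_n=\int U^{\mu_n}_\sigma(-\Delta)^{\sigma/2}\phi\,dx$ followed by dominated convergence are all correct modulo routine Fubini justifications. (One caveat: the degenerate case in which every $U^{\mu_n}_\sigma\equiv\infty$ is not ``immediate'' --- for $\mu_n=n^{-1}\,dx$ one has $\mu_n\rightharpoonup 0$ while $\lim_n U^{\mu_n}_\sigma\equiv\infty\neq U^0_\sigma$, so the statement tacitly relies on Landkof's standing convention that potentials are not identically infinite.) The genuine gap is the one you name yourself: the upgrade from $V=U^\mu_\sigma$ a.e.\ to $V=U^\mu_\sigma$ quasi-everywhere. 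You invoke as a black box that a decreasing limit of potentials coincides off a set of zero capacity with its lower semicontinuous regularization; but that is Cartan's fundamental convergence theorem, which is the entire non-trivial content of the statement being proved and lives in precisely the reference the paper already cites for the whole theorem. So what you have is a correct reduction, not a proof. A secondary soft spot in the same step: from $V=U^\mu_\sigma$ a.e.\ together with $V\ge U^\mu_\sigma$ everywhere, the identity $\liminf_{y\to x}V(y)=U^\mu_\sigma(x)$ at \emph{every} $x$ does not follow from \Cref{re:mark} alone, since the liminf sees the exceptional Lebesgue-null set; you additionally need the mean-value property $\lim_{r\to 0}|B_r(x)|^{-1}\int_{B_r(x)}U^\mu_\sigma\,dy=U^\mu_\sigma(x)$ of Riesz potentials to pass the liminf through the full-measure set. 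Both missing ingredients are true and standard, but they are the potential theory, not consequences of what you wrote.
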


\subsection{Banach-Saks: On weak and pointwise convergence}

We state here a precursor of Mazur's Theorem on strongly convergent sequences built out of weakly convergent ones, given by Banach 
and Saks in~\cite{BanachSaks}.
\begin{Theorem}[Banach-Saks] \label{re:BanachSaks}
    Let $f_n \in L^p(\Omega)$ for some $1<p< \infty$ and $\Omega \subset \RR^N$. If $f_n$ converges weakly to $f$ in $L^p(\Omega)$, then there exists a subsequence $f_{n_j}$ such that $\frac{1}{N} \sum_{j = 1}^N f_{n_j}$ converges strongly to $f$ in $L^p(\Omega)$.
\end{Theorem}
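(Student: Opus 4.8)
The plan is to first reduce to the case $f=0$ by replacing each $f_n$ with $f_n-f$, and to recall that a weakly convergent sequence is norm bounded, say $\|f_n\|_p\le M$ for all $n$. I will then choose the subsequence greedily: set $n_1=1$, and once $n_1<\dots<n_k$ are chosen, put $F_k=\sum_{j=1}^{k}f_{n_j}$ and observe that $J(F_k):=|F_k|^{p-2}F_k$ belongs to $L^{p'}(\Omega)$, with $\|J(F_k)\|_{p'}=\|F_k\|_p^{p-1}<\infty$, since $(p-1)p'=p$. Hence $v\mapsto\int_\Omega J(F_k)\,v$ is a bounded linear functional on $L^p(\Omega)$, so $f_n\rightharpoonup 0$ forces $\int_\Omega J(F_k)\,f_n\to 0$ as $n\to\infty$; I then pick $n_{k+1}>n_k$ with $\bigl|\int_\Omega J(F_k)\,f_{n_{k+1}}\bigr|<2^{-k}$.

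The heart of the matter is to bound $a_N:=\|F_N\|_p^p$ along this subsequence by elementary pointwise inequalities. For $1<p\le 2$ I will use $|a+b|^p\le |a|^p+p\,|a|^{p-2}a\,b+C_p|b|^p$ for all real $a,b$, which follows by homogeneity from the fact that $\psi(t):=|1+t|^p-1-pt$ is nonnegative (it is convex with minimum value $0$ at $t=0$) and that $\psi(t)/|t|^p$ is bounded (it tends to $0$ at $t=0$ because $p<2$, and to $1$ as $t\to\pm\infty$). For $p\ge 2$ the correct inequality carries one extra term, $|a+b|^p\le |a|^p+p\,|a|^{p-2}a\,b+C_p\bigl(|a|^{p-2}b^2+|b|^p\bigr)$, obtained from the second-order Taylor formula for $\theta\mapsto|a+\theta b|^p$ together with $(|a|+|b|)^{p-2}\le C_p(|a|^{p-2}+|b|^{p-2})$.

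Substituting $a=F_k$, $b=f_{n_{k+1}}$ and integrating, the linear term is precisely $p\int_\Omega J(F_k)\,f_{n_{k+1}}$, which is bounded by $p\,2^{-k}$ by construction, and for $p\ge2$ the extra term is controlled by Hölder's inequality, $\int_\Omega|F_k|^{p-2}f_{n_{k+1}}^2\le\|F_k\|_p^{p-2}\|f_{n_{k+1}}\|_p^{2}\le M^2 a_k^{(p-2)/p}$. We thus arrive at $a_{k+1}\le a_k+C$ when $1<p\le2$, and at $a_{k+1}\le a_k+C\bigl(1+a_k^{(p-2)/p}\bigr)$ when $p\ge2$, with $C=C(p,M)$. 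The first recursion gives $a_N=O(N)$, hence $\|N^{-1}F_N\|_p=O(N^{1/p-1})\to0$ because $1/p-1<0$. For the second I will show by induction that $a_N\le C_1 N^{p/2}$ for a suitable $C_1\ge M^p$: the inductive step uses $(N+1)^{p/2}\ge N^{p/2}(1+\tfrac{p}{2N})$ (valid since $p/2\ge1$) together with Young's inequality to absorb the term $CC_1^{(p-2)/p}N^{(p-2)/2}$ into $\tfrac{p}{2}C_1 N^{(p-2)/2}$, which is possible because $(p-2)/p<1$; this yields $\|N^{-1}F_N\|_p\le C_1^{1/p}N^{-1/2}\to0$. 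In every case $\|N^{-1}F_N\|_p\to0$, and undoing the initial reduction this says exactly that $N^{-1}\sum_{j=1}^{N}f_{n_j}\to f$ strongly in $L^p(\Omega)$.

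I expect the delicate point to be the range $p>2$: there the two-term pointwise inequality is genuinely false, so the $|a|^{p-2}b^2$ term must be retained, and the resulting nonlinear recursion only closes after identifying the sharp growth rate $a_N\sim N^{p/2}$ and balancing the constants carefully. An alternative would be to invoke the uniform convexity of $L^p$ for $1<p<\infty$ together with Kakutani's deduction of the Banach--Saks property from uniform convexity, but that route essentially repackages the same estimate.
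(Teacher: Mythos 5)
The paper does not prove this statement at all: it is quoted as a classical result with a citation to Banach and Saks (1930), so there is no in-paper argument to compare against. Your proposal is a correct, self-contained reconstruction of the classical proof, and all the key steps check out: the greedy selection works because $J(F_k)=|F_k|^{p-2}F_k\in L^{p'}(\Omega)$ (as $(p-1)p'=p$), so weak convergence of $f_n$ to $0$ kills the pairing; the two pointwise inequalities are the standard Clarkson-type estimates and your justifications (boundedness of $\psi(t)/|t|^p$ for $1<p\le 2$, second-order Taylor plus $(|a|+|b|)^{p-2}\le C_p(|a|^{p-2}+|b|^{p-2})$ for $p\ge 2$) are sound; and the two recursions close with the sharp rates $a_N=O(N)$ and $a_N=O(N^{p/2})$, giving $\|N^{-1}F_N\|_p\to 0$ in both regimes. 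Two cosmetic remarks: at $p=2$ the ratio $\psi(t)/t^2$ equals $1$ rather than tending to $0$, but boundedness — which is all you use — still holds; and for the purposes of this paper the quantitative rates are not needed, so the uniform-convexity route you mention, or even the simpler Hilbert-space argument for $p=2$, would also suffice, but your argument covers the full range $1<p<\infty$ exactly as stated.
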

Given a sequence $\{a_j\}_j \subset \RR$, we call $\frac{1}{N} \sum_{j = 1}^N a_j$ its \textit{Cesàro mean}. Observe that if $a_n$ converges to~$a \in \RR$, then its Cesàro mean converges to $a$ too. 

In the study of PDEs, it is highly useful to have tools that determine when the weak limit and the pointwise limit of a sequence coincide. One way to establish this comes as a consequence of the previous result
\begin{Corollary}
    Let $f_n \in L^p(\Omega)$ for some $1<p< \infty$ and $\Omega \subset \RR^N$. Suppose that $\{f_n\}_n$ is bounded in $L^p(\Omega)$. If $f_n$ converges pointwise a.e.\ to $f$ then it converges weakly to $f$ in $L^p(\Omega)$.
\end{Corollary}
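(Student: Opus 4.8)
The plan is to deduce this from reflexivity of $L^p(\Omega)$ for $1<p<\infty$ together with the Banach--Saks theorem (\Cref{re:BanachSaks}), via the standard subsequence principle. First I would observe that the statement is meaningful, i.e.\ $f\in L^p(\Omega)$: since $|f_n|^p\to|f|^p$ a.e., Fatou's lemma gives $\int_\Omega|f|^p\le\liminf_n\int_\Omega|f_n|^p<\infty$ because $\{f_n\}_n$ is bounded in $L^p(\Omega)$.

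The key reduction is that it suffices to show that \emph{every} subsequence of $\{f_n\}_n$ admits a further subsequence converging weakly to $f$ in $L^p(\Omega)$. Granting this, if $f_n$ did not converge weakly to $f$ there would be $\varphi\in L^{p'}(\Omega)$, $\varepsilon>0$, and a subsequence $\{f_{n_k}\}_k$ with $\big|\int_\Omega (f_{n_k}-f)\varphi\big|\ge\varepsilon$ for all $k$; but this subsequence would itself have a sub-subsequence converging weakly to $f$, contradicting the inequality. So $f_n\rightharpoonup f$, as desired.

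To prove the reduced statement, fix an arbitrary subsequence and relabel it $\{f_n\}_n$. Since $L^p(\Omega)$ is reflexive and $\{f_n\}_n$ is bounded, there are a subsequence $\{f_{n_j}\}_j$ and $g\in L^p(\Omega)$ with $f_{n_j}\rightharpoonup g$ in $L^p(\Omega)$. Applying \Cref{re:BanachSaks} to $\{f_{n_j}\}_j$ yields a further subsequence $\{f_{n_{j_l}}\}_l$ whose Cesàro means $s_L=\tfrac1L\sum_{l=1}^L f_{n_{j_l}}$ converge to $g$ strongly in $L^p(\Omega)$; passing to a further subsequence we may assume $s_L\to g$ a.e. On the other hand, $f_{n_{j_l}}\to f$ a.e.\ (a subsequence of an a.e.\ convergent sequence), and the Cesàro mean of a convergent real sequence converges to the same limit, so $s_L\to f$ a.e.\ as well. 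Hence $g=f$ a.e., and therefore $f_{n_j}\rightharpoonup f$ in $L^p(\Omega)$, which is what the reduction requires.

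I do not expect a genuine obstacle here; the only point requiring care is the bookkeeping of nested subsequences: reflexivity provides weak convergence only along a subsequence, Banach--Saks along a further one, and a further one again for the a.e.\ convergence of the Cesàro means, so one must run the subsequence principle to pass back to the full sequence, and one must invoke the elementary fact that Cesàro means of a convergent sequence (applied to the a.e.\ convergence of $\{f_{n_{j_l}}\}_l$ to $f$) converge to the same limit.
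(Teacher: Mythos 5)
Your proof is correct and takes the route the paper intends: the corollary is presented as a consequence of the Banach--Saks theorem, and you invoke it precisely to convert a weakly convergent (sub)sequence into strongly convergent Ces\`aro means, which you then match against the a.e.\ limit to identify the weak limit with $f$. The use of reflexivity to extract a weak limit, the subsequence principle to pass back to the full sequence, and the nested-subsequence bookkeeping are all handled correctly.
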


\section{Solutions growing at infinity} \label{se:weighted}

This section is devoted to presenting estimates for very weak solutions, and is heavily inspired by~\cite{BonforteVazquezQuantitativeEstimates}. There, a subclass of very weak solutions that we shall call \textit{weighted solutions} is studied. At the end, we will establish a comparison principle, and thus uniqueness, for these solutions. Throughout this whole section we work with sign-changing solutions, where we consider $c^m = |c|^{m-1}c$ for $c \in \RR$.

We want to study very weak solutions. According to its definition, these solutions verify 
\begin{equation}
    \int_{\RR^N} \frac{u^m(x,t)}{(1+|x|^2)^{\frac{N+\sigma}{2}}} \, dx < \infty
\end{equation}
for a.e.\ $t>0$. Therefore, $u^m(x,t)$ can present growth at infinity, as much growth as the previous integral allows. Intuitively, and following the example of the linear case~\cite{BonforteSireVazquez2017_OptimalWidder}, it is reasonable to ask for initial data~$u_0$ that satisfy 
\begin{equation} \label{eq:condpesom}
    \int_{\RR^N} \frac{u_0^m(x)}{(1+|x|^2)^{\frac{N+\sigma}{2}}} \, dx < \infty.
\end{equation}
However, we would like the condition for the initial data to be given in terms of $u_0$ and not in terms of $u_0^m$, that is, we would like to give a condition as
\begin{equation} \label{eq:condpesopeso}
    \int_{\RR^N} u_0 \vartheta < \infty
\end{equation}
for an appropriate weight $\vartheta$. The reason is that it is straightforward to change $u_0$ to a measure, which is one of the aims of this paper. Another reason is that we cannot guarantee that estimates that we will prove are satisfied if we ask for~\eqref{eq:condpesom} instead of~\eqref{eq:condpesopeso}.

Let us define the set of weights $\vartheta$. We want them to decay at infinity to allow for $u$ to grow. We define $\varTheta$ as
\begin{equation} \label{eq:varTheta} 
    \varTheta = \left\{\frac{1}{(1+(|x|^2 - 1)_+^4)^{a/8}}, \quad \text{ for } a \in \left(N, N+\frac{\sigma}{m}\right) \right\}.
\end{equation}
For any $\vartheta \in \varTheta$, we get that 
\begin{equation} \label{eq:quotient}
    |(-\Delta)^{\sigma/2}\vartheta(x)| \leq \frac{C}{1+|x|^{N+\sigma}}, \qquad \int_{\RR^N} \left(\frac{1}{(1+|x|^{N+\sigma})\vartheta^m(x)} \right)^{\frac{1}{1-m}} \, dx < \infty,
\end{equation}
for some $C>0$, as shown in~\cite{BonforteVazquezQuantitativeEstimates}. This condition is useful for the following computation,
\begin{equation} \label{eq:comprecurrente}
    \begin{aligned}
            \int_{\RR^N} u^m (-\Delta)^{\sigma/2}\vartheta &= \int_{\RR^N} u^m \vartheta^m \frac{|(-\Delta)^{\sigma/2}\vartheta|}{\vartheta^m}  \\
            &\leq \left(\int_{\RR^N} u \vartheta\right)^{m}  \left(\int_{\RR^N} \left(\frac{|(-\Delta)^{\sigma/2}\vartheta(x)|}{\vartheta(x)^m} \right)^{\frac{1}{1-m}}\right)^{1-m},
    \end{aligned}
\end{equation}
which appears several times in this work. The same computation, changing $(-\Delta)^{\sigma/2}\vartheta$ with~${(-\Delta)^{\sigma/2}\phi}$ for $\phi \in \cont^\infty_c(\RR^N)$ but still multiplying and dividing by $\vartheta^m$, is the reason why weighted solutions are a natural subclass of very weak solutions.

The set of weights $\varTheta$ defined in~\eqref{eq:varTheta} is not the most general we can work with. Notice we cannot choose $a = N+\frac{\sigma}{m}$, otherwise the second condition of~\eqref{eq:quotient} and computation~\eqref{eq:comprecurrente} would fail. However, we could add a logarithmic correction and consider weights with $a = N+\frac{\sigma}{m}$, for instance, $\log^p(|x|)|x|^{-(N+\frac{\sigma}{m})}$ for $p> \frac{1-m}{m}$ and $|x| \gg 1$.  What we want are positive weights satisfying~\eqref{eq:quotient} having the strongest decay possible, but we only work with weights $\vartheta$ with polynomial decay for the sake of simplicity. They have to be positive in order for~\eqref{eq:comprecurrente} to work, which is not needed in the local case, see~\cite{HerreroPierreFastRegularizing}. This is a restriction that appears due to the nonlocality of the operator. 

If we imposed the additional restriction that $u_0$ has at most polynomial growth, we would find that~\eqref{eq:condpesom} and~\eqref{eq:condpesopeso} with $\vartheta \in \varTheta$ are equivalent. However, in general, condition~\eqref{eq:condpesopeso} with $\vartheta \in \varTheta$ is more restrictive than~\eqref{eq:condpesom}. For a wider discussion on the initial data, see~\Cref{se:extensions}.

\begin{Definition}[Weighted solutions]
    We say $u$ is a \textit{weighted very weak solution} or, simply, \textit{weighted solution}, of equation~\eqref{equation} in $\RR^N \times (0,T)$ if it is a very weak solution such that ${u \in L^1_\text{loc}((0,T) \! : \! L^1_\vartheta)}$ for some~$\vartheta \in \varTheta$. 
\end{Definition}

Later, we will show that we can actually assume ${u \in \cont((0,T) \! : \!L^1_\vartheta)}$ in the previous definition, see~\Cref{re:continuityremark}. When comparing two weighted solutions $u,v$ with weights $\vartheta_u$ and $\vartheta_v$ respectively, we will assume without further mention that they integrate against the same $\vartheta \in \varTheta$, the weight with the stronger decay, that is, the one with the bigger exponent $a$. 

\begin{Remark}
    \emph{(i)} The fractional fast diffusion equation allows solutions to grow more than solutions of the fractional heat equation. This can be seen in the definition~\eqref{eq:varTheta} of $\varTheta$. The decay for the weight in the linear case corresponds to $a = N+\sigma$, while the decay allowed in the fast diffusion case can reach~{$N+\sigma < a < N+ \frac{\sigma}{m}$}. 

    \noindent\emph{(ii)} We can use the weights $\vartheta \in \varTheta$ as test functions in expression~\eqref{eq:salto_tiempo} for weighted solutions. This can be proven by studying the limit $\lim_{R \to \infty} (-\Delta)^{\sigma/2}(\vartheta \phi_R)$, where $\phi_R$ is a radial, smooth, compactly supported cut-off function such that $\phi_R \equiv 1$ in $B_R$, decreases in $B_{2R}\setminus B_R$ and $\phi_R\equiv 0$ in $\RR^N \setminus B_{2R}$.  
\end{Remark}

An important estimate satisfied by this kind of solution is the next one, again from~\cite{BonforteVazquezQuantitativeEstimates}.

\begin{Theorem}[Weighted $L^1$ estimates] \label{re:orderedcomparison}
    Let $u\geq v$ be two ordered weighted solutions to~\eqref{equation}, with $0 < m < 1$. Let $\vartheta_R(x) = \vartheta(x/R)$ where $R>0$ and $\vartheta \in \varTheta$. Then, for all $0 \leq \tau, t < \infty$ we have
    \begin{equation} \label{eq:orderedcomparison}
        \left( \int_{\RR^N} (u-v)(x,t) \vartheta_R(x) \, dx\right)^{1-m} \leq \left( \int_{\RR^N} (u-v)(x,\tau ) \vartheta_R(x) \, dx\right)^{1-m} + \frac{C |t-\tau| }{R^{\sigma - N(1-m)}},
    \end{equation}
    with $C>0$ depending only on $\sigma$, $m$ and $N$.
\end{Theorem}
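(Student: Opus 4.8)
The plan is to test the very weak formulation~\eqref{eq:salto_tiempo} (or rather~\eqref{eq:veryweakwithinitialdata} with a time-independent test function) against the rescaled weight $\vartheta_R$, differentiate in time the quantity $Y_R(t) := \int_{\RR^N}(u-v)(x,t)\,\vartheta_R(x)\,dx$, and control the right-hand side by a power of $Y_R$ itself, obtaining a differential inequality that integrates to~\eqref{eq:orderedcomparison}. First I would justify that $\vartheta_R$ is an admissible test function: by the second Remark after the definition of weighted solutions, weights in $\varTheta$ can be used in the very weak formulation (via the cut-off argument with $\phi_R$), and the scaling $\vartheta_R(x)=\vartheta(x/R)$ gives $(-\Delta)^{\sigma/2}\vartheta_R(x)=R^{-\sigma}\big((-\Delta)^{\sigma/2}\vartheta\big)(x/R)$, so that $|(-\Delta)^{\sigma/2}\vartheta_R(x)|\le R^{-\sigma}|x/R|^{-(N+\sigma)} = R^{N}|x|^{-(N+\sigma)}$ by~\eqref{eq:quotient}.

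Next, from~\eqref{eq:salto_tiempo} applied to $u$ and to $v$ and subtracted, for a.e.\ $t$ and $h\ge 0$,
\begin{equation*}
    Y_R(t+h)-Y_R(t) = -\int_t^{t+h}\!\!\int_{\RR^N}\big(u^m-v^m\big)(x,s)\,(-\Delta)^{\sigma/2}\vartheta_R(x)\,dx\,ds.
\end{equation*}
Since $u\ge v$ we have $0\le u^m-v^m\le u^m$ (using $0<m<1$ and the convention $a^m=|a|^{m-1}a$; more precisely one bounds $u^m-v^m$ by $(u-v)^m$ when both are nonnegative, and in the sign-changing-but-ordered case a short elementary inequality gives $|u^m-v^m|\le |u-v|^m$). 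Then I would run exactly the computation~\eqref{eq:comprecurrente}, but with $w:=u-v$ in place of $u$ and with $\vartheta_R$ in place of $\vartheta$: multiply and divide by $\vartheta_R^m$, apply Hölder with exponents $1/m$ and $1/(1-m)$, and use the scaling of the weight inside the $L^{1/(1-m)}$ factor. A change of variables $x=Ry$ in that factor produces
\begin{equation*}
    \left(\int_{\RR^N}\!\left(\frac{|(-\Delta)^{\sigma/2}\vartheta_R(x)|}{\vartheta_R(x)^m}\right)^{\frac1{1-m}}dx\right)^{1-m}
    = R^{\,\frac{N-(N+\sigma)/(1-m)+N m/(1-m)}{\cdots}}\cdot C_\vartheta,
\end{equation*}
and a careful bookkeeping of the exponents (using $|(-\Delta)^{\sigma/2}\vartheta_R| \le R^N|x|^{-(N+\sigma)}$, the homogeneity $\vartheta_R(x)^{-m/(1-m)}$ picks up $R$ under $x=Ry$ only through the finite integral in~\eqref{eq:quotient}, and the Jacobian contributes $R^N$) yields a clean power $R^{-(\sigma-N(1-m))}$ after raising to the $(1-m)$ power — matching the exponent claimed in~\eqref{eq:orderedcomparison}. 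The finiteness of $C_\vartheta$ is precisely the second condition in~\eqref{eq:quotient}. Altogether one arrives at
\begin{equation*}
    Y_R(t+h)-Y_R(t)\le \frac{C}{R^{\sigma-N(1-m)}}\int_t^{t+h} Y_R(s)^m\,ds,
\end{equation*}
with $C=C(N,\sigma,m)$.

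Finally I would turn this integral inequality into~\eqref{eq:orderedcomparison}. Dividing by $h$ and using that $Y_R\in\cont((0,T))$ (or working directly with the absolutely continuous function $t\mapsto Y_R(t)$, whose a.e.\ derivative satisfies $Y_R'(t)\le C R^{-(\sigma-N(1-m))}Y_R(t)^m$), the substitution $Z_R=Y_R^{1-m}$ gives $Z_R'(t)\le C(1-m)R^{-(\sigma-N(1-m))}$; integrating from $\tau$ to $t$ (and, if $t<\tau$, from $t$ to $\tau$, which is why $|t-\tau|$ appears) yields $Y_R(t)^{1-m}\le Y_R(\tau)^{1-m}+ C|t-\tau|R^{-(\sigma-N(1-m))}$, i.e.\ the claim after renaming the constant. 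One small technical point to handle at the end is the value $\tau=0$ or $t=0$: by the continuity in $L^1_\vartheta$ up to $t=0$ (from~\Cref{re:continuityremark}, or by passing to the limit in the inequality for $\tau'\downarrow 0$) the estimate extends to $0\le \tau,t<\infty$ as stated.

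The main obstacle is the exponent bookkeeping in the scaling step: one must be careful that the factor $\vartheta_R^{-m/(1-m)}$ does \emph{not} contribute an extra power of $R$ (it becomes $\vartheta(y)^{-m/(1-m)}$ after $x=Ry$, already accounted for in the constant $C_\vartheta$), so the only $R$-powers come from $|(-\Delta)^{\sigma/2}\vartheta_R|\le R^N|x|^{-(N+\sigma)}$ raised to $\tfrac1{1-m}$, the $|x|^{-(N+\sigma)/(1-m)}$ term turning into $R^{-(N+\sigma)/(1-m)}$, and the Jacobian $R^N$; after multiplying and taking the $(1-m)$-th power these combine to exactly $R^{\,N(1-m)+N-(N+\sigma)}\cdot$(irrelevant)$=R^{-(\sigma-N(1-m))}$. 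A secondary point is the elementary inequality $|u^m-v^m|\le|u-v|^m$ for ordered (possibly sign-changing) arguments under $0<m<1$, which one should state and prove in a line. Everything else is a routine Gronwall-type argument.
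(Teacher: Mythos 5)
Your proposal is correct and follows essentially the same route as the paper: the paper does not reprove this estimate (it is quoted from Bonforte--V\'azquez), but its proof of the stronger Theorem~\ref{re:weightedcomparison} is exactly your argument --- test with $\vartheta_R$, bound the right-hand side via~\eqref{eq:comprecurrente} and the scaling factor $R^{-(\sigma-N(1-m))}$, and integrate the resulting differential inequality for $Y_R^{1-m}$. Two cosmetic fixes: since the section allows sign-changing (though ordered) solutions, the inequality $|u^m-v^m|\le|u-v|^m$ fails in the mixed-sign case (take $u=1$, $v=-1$, $m=1/2$); the correct bound is $u^m-v^m\le 2^{1-m}(u-v)^m$, which only changes the constant. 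Also, to cover $t<\tau$ you should take absolute values in the identity for $Y_R(t+h)-Y_R(t)$, obtaining $|(Y_R^{1-m})'|\le C(1-m)R^{-(\sigma-N(1-m))}$; integrating your one-sided inequality backwards in time gives a lower, not an upper, bound on $Y_R(t)$.
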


\begin{Remark} \label{re:acotacionhastacero}
    As a consequence, if $u$ is a nonnegative weighted solution, then~${u \in L^\infty((0, \infty) \! : \! L^1_\vartheta)}$.
\end{Remark}

Let us prove a stronger result applicable to any two weighted solutions, not necessarily ordered and maybe changing sign, that will give us a comparison principle. To achieve this, we require a \textit{Kato type inequality}: let $f$ be a smooth, convex function; then, 
\begin{equation} \label{eq:smoothkato}
    (-\Delta)^{\sigma/2} f(\phi) \leq f'(\phi) (-\Delta)^{\sigma/2} \phi,
\end{equation}
which in particular implies 
\begin{equation} \label{eq:positivekato}
    (-\Delta)^{\sigma/2} \phi_+ \leq \chi_{\{\phi\geq 0\}} (-\Delta)^{\sigma/2} \phi,
\end{equation}
see for instance~\cite{CordobaCordobaKatoInequality}. Let us explain the formal argument: we have that, for $w = u-v$,
\begin{equation}
    (-\Delta)^{\sigma/2} (u^m - v^m)_+ \leq \chi_{\{w\geq 0\}} (-\Delta)^{\sigma/2} (u^m - v^m).
\end{equation}
With this in mind and observing that
\begin{equation}
    \partial_t w_+ = \chi_{\{w\geq 0\}}\partial_t w,
\end{equation}
we get, using the equation, that
\begin{equation}
    \partial_t w_+ = -\chi_{\{w\geq 0\}} (-\Delta)^{\sigma/2} (u^m - v^m) \leq - (-\Delta)^{\sigma/2} (u^m - v^m)_+.
\end{equation}
Multiplying this expression by $\vartheta_R$ and integrating we obtain
\begin{equation}
    \begin{aligned}
        \frac{d}{dt} \int_{\RR^N} w_+ \vartheta_R & \leq - \int_{\RR^N} (-\Delta)^{\sigma/2} (u^m - v^m)_+ \vartheta_R = - \int_{\RR^N}  (u^m - v^m)_+ (-\Delta)^{\sigma/2}\vartheta_R    \\
                                                & \leq \int_{\RR^N}  (u^m - v^m)_+ |(-\Delta)^{\sigma/2}\vartheta_R| \leq 2^{1-m} \int_{\RR^N}  [(u - v)_+]^m |(-\Delta)^{\sigma/2}\vartheta_R| \\
                                                & = \int_{\RR^N}  (w_+)^m |(-\Delta)^{\sigma/2}\vartheta_R|.
    \end{aligned}
\end{equation}
Notice that we used $(a^m-b^m) \leq 2^{1-m}(a-b)^m$, which is true for $a \geq b$ and for any $0<m<1$. Observe that if in this last term we apply~\eqref{eq:comprecurrente}, we end up with the differential inequality
\begin{equation}
    \frac{d}{dt} \int_{\RR^N} w_+ \vartheta_R \leq C\left(\int_{\RR^N} w_+ \vartheta_R\right)^m, 
\end{equation}
with $C>0$ depending only on $\sigma$, $m$ and $N$, just like in~\Cref{re:orderedcomparison}. From this, we conclude that
\begin{equation}
    \left( \int w_+(x,t) \vartheta_R(x) \, dx\right)^{1-m} \leq \left( \int w_+(x,\tau ) \vartheta_R(x) \, dx\right)^{1-m} + \frac{C |t-\tau| }{R^{\sigma - N(1-m)}}
\end{equation}
for $0\leq \tau \leq t < \infty$. This estimate holds for any $0<m<1$. Assume,  additionally, that $m>m_c$, so that $\sigma - N(1-m) >0$. Then, for $u(0) \leq v(0)$ we can take the limit $R\to \infty$ to obtain that
\begin{equation}
    \int_{\RR^N} w_+ \leq 0,
\end{equation}
which implies that $w_+ = 0$ a.e.\ and thus $u \leq v$.

\begin{Theorem}[Comparison estimate] \label{re:weightedcomparison}
    Let $u,v$ be two weighted solutions of equation~\eqref{equation}. Then,
    \begin{equation} \label{eq:weightedcomparison}
        \left( \int_{\RR^N} (u-v)_+(x,t) \vartheta_R(x) \, dx\right)^{1-m} \leq \left( \int_{\RR^N} (u-v)_+(x,\tau ) \vartheta_R(x) \, dx\right)^{1-m} + \frac{C (t-\tau) }{R^{\sigma - N(1-m)}}
    \end{equation}
    for all $0 \leq \tau \leq t < \infty$. If, in addition, we assume $m_c<m<1$, then $u(\tau) \leq v(\tau)$ implies $u(t) \leq v(t)$ for any $t\geq \tau$.
\end{Theorem}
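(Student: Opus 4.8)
The plan is to turn the formal chain of inequalities displayed just above the statement into a rigorous one; the only genuinely delicate point is the passage through the nonlocal Kato inequality for merely very weak solutions.

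\textbf{Step 1: showing that $(u-v)_+$ is a weighted very weak subsolution.} First I would set $w=u-v$, $z=u^m-v^m$, and note that, since $s\mapsto s^m$ is increasing, the sets $\{w\ge 0\}$ and $\{z\ge 0\}$ agree up to a null set. Then I would regularize in time with a Steklov average $w_h(\cdot,t)=\tfrac1h\int_t^{t+h}w(\cdot,s)\,ds$ — so that $t\mapsto\int_{\RR^N}j_\delta(w_h)\,\phi$ becomes absolutely continuous — and approximate $s\mapsto s_+$ by a smooth convex $j_\delta$ with $0\le j_\delta'\le 1$ and $j_\delta'\uparrow\chi_{\{s>0\}}$. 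Combining \eqref{eq:salto_tiempo} for $u$ and for $v$, the convexity inequality $j_\delta(b)-j_\delta(a)\ge j_\delta'(a)(b-a)$, and the nonlocal Kato inequality \eqref{eq:positivekato}, and then letting $\delta\to0$ and $h\to0$ (dominated convergence is available since $z\in L^1_\mathrm{loc}((0,\infty):L^1_\rho)$ while $|(-\Delta)^{\sigma/2}\phi|\lesssim\rho$), I expect to obtain, for every $0\le\phi\in\cont^\infty_c(\RR^N)$ and a.e.\ $0<\tau\le t$,
\[
\int_{\RR^N} w_+(t)\,\phi-\int_{\RR^N} w_+(\tau)\,\phi\ \le\ -\int_\tau^t\!\!\int_{\RR^N} z_+\,(-\Delta)^{\sigma/2}\phi\,dx\,ds .
\]
This is the step I expect to be the main obstacle: for very weak solutions the identities $\partial_t w_+=\chi_{\{w\ge0\}}\partial_t w$ and ``$(-\Delta)^{\sigma/2}z_+\le\chi_{\{z\ge0\}}(-\Delta)^{\sigma/2}z$'' only make sense distributionally, so the composition of Kato's inequality with the non-smooth $w$ must be justified entirely through the approximation, keeping uniform control of the nonlocal tails against the weight.

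\textbf{Step 2: testing with $\vartheta_R$ and integrating the differential inequality.} Next I would take $\phi=\vartheta_R$, with $\vartheta_R(x)=\vartheta(x/R)$; this is admissible by the remark following the definition of $\varTheta$ (approximate $\vartheta_R$ by $\vartheta_R\phi_{R'}$ and control $(-\Delta)^{\sigma/2}(\vartheta_R\phi_{R'})$ as $R'\to\infty$). Moving $(-\Delta)^{\sigma/2}$ onto $\vartheta_R$ and using $z_+=(u^m-v^m)_+\le2^{1-m}(w_+)^m$ gives
\[
\int_{\RR^N} w_+(t)\,\vartheta_R-\int_{\RR^N} w_+(\tau)\,\vartheta_R\ \le\ 2^{1-m}\int_\tau^t\!\!\int_{\RR^N}(w_+)^m\,|(-\Delta)^{\sigma/2}\vartheta_R|\,dx\,ds .
\]
Then the Hölder argument of \eqref{eq:comprecurrente} with exponents $1/m$ and $1/(1-m)$, together with the second bound in \eqref{eq:quotient} and the scalings $(-\Delta)^{\sigma/2}\vartheta_R(x)=R^{-\sigma}((-\Delta)^{\sigma/2}\vartheta)(x/R)$ and $\vartheta_R(x)^m=\vartheta(x/R)^m$, produces exactly the factor $R^{-(\sigma-N(1-m))}$, that is $\int_{\RR^N}(w_+)^m|(-\Delta)^{\sigma/2}\vartheta_R|\le C R^{-(\sigma-N(1-m))}\big(\int_{\RR^N} w_+\vartheta_R\big)^m$. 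Writing $y(s)=\int_{\RR^N} w_+(s)\vartheta_R$, this reads $y'\le C R^{-(\sigma-N(1-m))}y^m$ in integrated form; since $(y^{1-m})'=(1-m)y^{-m}y'$, integrating from $\tau$ to $t$ yields \eqref{eq:weightedcomparison} with a new constant $C=C(N,\sigma,m)$. As in \Cref{re:orderedcomparison}, this part uses only $0<m<1$.

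\textbf{Step 3: the comparison principle.} Finally, assuming $m_c<m<1$ so that $\sigma-N(1-m)>0$, the last term in \eqref{eq:weightedcomparison} vanishes as $R\to\infty$. If $u(\tau)\le v(\tau)$, then $\int_{\RR^N} w_+(\tau)\vartheta_R=0$ for all $R$, hence $\int_{\RR^N} w_+(t)\vartheta_R\le\big(C(t-\tau)\big)^{1/(1-m)}R^{-(\sigma-N(1-m))/(1-m)}\to0$; since $\vartheta_R(x)=\vartheta(x/R)\uparrow\vartheta(0)>0$ as $R\to\infty$, monotone convergence forces $\int_{\RR^N}w_+(t)\,dx=0$, i.e.\ $u(t)\le v(t)$ — first for a.e.\ $t$, and then for every $t\ge\tau$ by the $\cont((0,\infty):L^1_\vartheta)$ continuity of weighted solutions (\Cref{re:continuityremark}).
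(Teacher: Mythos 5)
Your Steps~2 and~3 are correct and match the paper's argument: once the integrated ``subsolution'' inequality for $(u-v)_+$ against $\vartheta_R$ is in hand, the H\"older computation of~\eqref{eq:comprecurrente} with the scalings $(-\Delta)^{\sigma/2}\vartheta_R = R^{-\sigma}((-\Delta)^{\sigma/2}\vartheta)(\cdot/R)$, $\vartheta_R^m = \vartheta^m(\cdot/R)$, and the elementary bound $(a^m-b^m)_+ \le 2^{1-m}(a-b)_+^m$ produce exactly the ODE inequality $y' \le C R^{-(\sigma-N(1-m))} y^m$, and your treatment of $R\to\infty$ in the supercritical range $m>m_c$, including the use of the $\cont((0,T)\!:\!L^1_\vartheta)$ continuity from~\Cref{re:continuityremark} to promote the conclusion from a.e.~$t$ to every $t$, is fine.

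The gap is in Step~1, and it is the one you half-anticipate. Steklov averaging regularizes $w=u-v$ only in time, so $j_\delta'(w_h)\,\phi$ is merely a bounded measurable function of $x$; it is not an admissible test function in the very weak formulation~\eqref{veryweakuation} or in~\eqref{eq:salto_tiempo} (those require $C^\infty_c$ in space so that $(-\Delta)^{\sigma/2}$ of the test function makes pointwise sense and can be paired with $z\in L^1_\rho$). For the same reason Kato's inequality~\eqref{eq:positivekato} cannot be applied directly to $z_+=(u^m-v^m)_+$: the pointwise inequality requires $(-\Delta)^{\sigma/2}$ of the argument to be defined pointwise, which fails for a very weak solution. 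Acknowledging that the identities ``only make sense distributionally'' does not by itself fix this, because the approximation you chose does not supply the missing spatial regularity. The paper's proof closes exactly this gap by first mollifying $u,v$ \emph{in space and time}: it picks $u_\varepsilon, v_\varepsilon\in\cont^\infty_c(\RR^N\times(a,b))$ with $\|u-u_\varepsilon\|_{L^1(\vartheta\,dx\,dt)}<\varepsilon/2$, etc., so that $f_j'(u_\varepsilon-v_\varepsilon)\widetilde\xi$ \emph{is} a legitimate smooth compactly supported test function, and Kato's inequality is applied to the regular $u_\varepsilon^m-v_\varepsilon^m$ before letting $j\to\infty$ and $\varepsilon\to0$. (It also uses that the $L^1_\vartheta$-approximation automatically gives $L^1_\rho$-approximation of the powers, via~\eqref{eq:comprecurrente}, to pass to the limit in the nonlinear term.) If you want to keep the Steklov average for the time variable, you would still need a spatial mollification of $u,v$ (or an equivalent doubling/regularization device) to make the nonlinear test function admissible; time regularization alone does not suffice.
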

\begin{proof}
    Given that $u$ and $v$ are very weak solutions, they satisfy~\eqref{veryweakuation}, hence
    \begin{equation}
        \int_a^b \int_{\RR^N} (u-v) \partial_t \xi  = \int_a^b \int_{\RR^N} (u^m - v^m) (- \Delta)^{\sigma/2} \xi ,
    \end{equation}
    for $0\leq a \leq b$. Since $u, v \in L^1\left(\RR^N \times (a,b) , \vartheta \, dx \right)$, we can choose $u_\varepsilon, v_\varepsilon \in \cont_c^\infty(\RR^N \times (a,b))$ such that
    \begin{equation}
        \int_a^b \int_{\RR^N} \left|u - u_\varepsilon \right| \vartheta  < \frac{\varepsilon}{2}, \qquad
        \int_a^b \int_{\RR^N} \left|v - v_\varepsilon \right| \vartheta  < \frac{\varepsilon}{2}.
    \end{equation}
    Notice that $u_\varepsilon^m$ and $v_\varepsilon^m$ also approximate $u^m$ and $v^m$, respectively, in $L^1\left(\RR^N \times (a,b) , (1+|x|)^{-(N + \sigma)}dx \right)$, thanks to~\eqref{eq:comprecurrente}.

    Now choose $f_j$ a smooth, convex approximation of the sign function and take $\xi = f'_j(u_\varepsilon - v_\varepsilon) \widetilde{\xi}$, with $0 \leq \widetilde{\xi} \in \cont_c^\infty(\RR^N \times (a,b))$. For the left-hand side, we obtain
    \begin{equation}
        \begin{aligned}
            \int_a^b \int_{\RR^N} (u-v) \partial_t \big( f'_j(u_\varepsilon - v_\varepsilon)\widetilde{\xi} \big)  & \leq \int_a^b \int_{\RR^N} (u_\varepsilon-v_\varepsilon) \partial_t\big( f'_j(u_\varepsilon - v_\varepsilon)\widetilde{\xi} \big)   + \varepsilon \\
            &= -\int_a^b \int_{\RR^N} \partial_t(u_\varepsilon-v_\varepsilon)  f'_j(u_\varepsilon - v_\varepsilon)\widetilde{\xi}    + \varepsilon \\
            & = - \int_a^b \int_{\RR^N} \partial_t f_j(u_\varepsilon - v_\varepsilon) \widetilde{\xi}  + \varepsilon                                  \\
                                                                                                                          & = \int_a^b \int_{\RR^N} f_j(u_\varepsilon - v_\varepsilon) \partial_t \widetilde{\xi}  + \varepsilon. 
        \end{aligned}
    \end{equation}
    For the right-hand side, we get that
    \begin{equation}
        \begin{aligned}
            \int_a^b \int_{\RR^N} (u^m-v^m) (-\Delta)^{\sigma/2} \big( f'_j(u_\varepsilon - v_\varepsilon)\widetilde{\xi} \big)  & \geq \int_a^b \int_{\RR^N} (u_\varepsilon^m-v_\varepsilon^m) (-\Delta)^{\sigma/2} \big( f'_j(u_\varepsilon - v_\varepsilon)\widetilde{\xi} \big)  - \varepsilon \\
                                                                                                                                        & = \int_a^b \int_{\RR^N}(-\Delta)^{\sigma/2}(u_\varepsilon^m-v_\varepsilon^m) f'_j(u_\varepsilon - v_\varepsilon)\widetilde{\xi}  - \varepsilon.
        \end{aligned}
    \end{equation}
    So far we have found that
    \begin{equation}
        \int_a^b \int_{\RR^N} f_j(u_\varepsilon - v_\varepsilon) \partial_t \widetilde{\xi}  + 2\varepsilon \geq \int_a^b \int_{\RR^N}(-\Delta)^{\sigma/2}(u_\varepsilon^m-v_\varepsilon^m) f'_j(u_\varepsilon - v_\varepsilon)\widetilde{\xi} .
    \end{equation}
    We now take the limit $j\to \infty$, and then use Kato's inequality~\eqref{eq:positivekato} to obtain
    \begin{equation}
        \begin{aligned}
            \int_a^b \int_{\RR^N} (u_\varepsilon - v_\varepsilon)_+ \partial_t \widetilde{\xi}  + 2\varepsilon & \geq \int_a^b \int_{\RR^N}(-\Delta)^{\sigma/2}(u_\varepsilon^m-v_\varepsilon^m) \chi_{ \{u_\varepsilon - v_\varepsilon \geq 0 \}}\widetilde{\xi}  \\
                                                                                                                      & \geq \int_a^b \int_{\RR^N}(-\Delta)^{\sigma/2}(u_\varepsilon^m-v_\varepsilon^m)_+ \, \widetilde{\xi}                                              \\
                                                                                                                      & = \int_a^b \int_{\RR^N}(u_\varepsilon^m-v_\varepsilon^m)_+ \, (-\Delta)^{\sigma/2}\widetilde{\xi} .
        \end{aligned}
    \end{equation}
    Letting $\varepsilon \to 0$ we find
    \begin{equation}
        \int_a^b \int_{\RR^N} (u - v)_+ \partial_t \widetilde{\xi}  \geq \int_a^b \int_{\RR^N}(u^m-v^m)_+ \, (-\Delta)^{\sigma/2}\widetilde{\xi} .
    \end{equation}
    Finally, choosing $\widetilde{\xi}(x,t)$ as an approximation of $\chi_{[t,t+h]}(t) \vartheta_R(x)$ and taking $h\to 0$, we obtain that 
    \begin{equation} \label{eq:almostfinished}
        \frac{d}{dt} \int_{\RR^N} (u - v)_+ \vartheta_R  \leq \int_{\RR^N}(u^m-v^m)_+ \, (-\Delta)^{\sigma/2}\vartheta_R .
    \end{equation}
    From this point onwards, the proof basically follows the formal one explained previously.
\end{proof}

\begin{Remark} \label{re:continuityremark}
    \emph{(i)} From expression~\eqref{eq:almostfinished} and estimate~\eqref{eq:comprecurrente} one can prove that weighted solutions satisfy that ${u \in \cont((0,T) \!: \! L^1_\vartheta) }$. In addition, if we use~\eqref{eq:weightedcomparison}, one can check that ${u \in \text{Lip}((0,T) \! : \!L^1_\vartheta)}$.

    \noindent\emph{(ii)} Using the previous result, we can now establish the existence of sign-changing solutions for~${0<m<1}$ starting from initial data a function $u_0$ satisfying $\displaystyle \int_{\RR^N} |u_0| \vartheta  < \infty$. This result was previously unattainable, since the estimate in~\Cref{re:orderedcomparison} requires ordered solutions. It can be proven through an approximation argument, just like in Theorem~3.1 of~\cite{BonforteVazquezQuantitativeEstimates}. 
\end{Remark}

\section{Existence of initial traces} \label{se:traces}

The following proof adapts~\cite[Theorem 7.2]{BonforteVazquezQuantitativeEstimates}, with minor modifications to account for weighted solutions.
\begin{Theorem} \label{re:initialtraces}
    Let $u$ be a nonnegative weighted solution of equation~\eqref{equation} in $\RR^N \times (0,T]$. Then, there exists a unique nonnegative Radon measure $\mu$ as initial trace, that is,
    \begin{equation}
        \lim_{t\to 0^+} \int_{\RR^N} u(x,t) \phi(x) \, dx = \int_{\RR^N} \phi(x) \, d\mu
    \end{equation}
    for all $\phi \in \cont_c(\RR^N)$. Moreover, the initial trace $\mu$ satisfies
    \begin{equation} \label{eq:muweighted}
        \lim_{t\to 0^+} \int_{\RR^N} u(x,t) \vartheta(x) \, dx = \int_{\RR^N} \vartheta(x) \, d\mu < \infty.
    \end{equation}
\end{Theorem}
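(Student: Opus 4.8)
\emph{Strategy.} The plan is to first prove, via~\eqref{eq:salto_tiempo}, that for every fixed test function the map $t\mapsto\int_{\RR^N}u(t)\,\phi$ is Lipschitz down to $t=0$; this produces a limiting linear functional, which the Riesz representation theorem turns into the trace measure $\mu$. Afterwards I would upgrade the convergence so that it persists with $\phi$ replaced by the weight $\vartheta$ itself, which is where the real work lies.

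First, fix $\phi\in\cont^\infty_c(\RR^N)$. Its fractional Laplacian $(-\Delta)^{\sigma/2}\phi$ is smooth and decays like $|x|^{-(N+\sigma)}$, so by~\eqref{eq:quotient} the constant $C_\phi:=\big(\int_{\RR^N}(|(-\Delta)^{\sigma/2}\phi|/\vartheta^m)^{1/(1-m)}\big)^{1-m}$ is finite. Feeding the H\"older‑type bound~\eqref{eq:comprecurrente} (with $\phi$ in place of $\vartheta$) and the a priori estimate $M:=\|u\|_{L^\infty((0,T):L^1_\vartheta)}<\infty$ of~\Cref{re:acotacionhastacero} into~\eqref{eq:salto_tiempo}, I obtain $\big|\int_{\RR^N}(u(t+h)-u(t))\phi\big|\le C_\phi M^m h$ for a.e.\ $t,h\ge0$; combined with the continuity $u\in\cont((0,T):L^1_\vartheta)$ of~\Cref{re:continuityremark}, this makes $t\mapsto\int_{\RR^N}u(t)\phi$ Lipschitz on $(0,T]$, hence continuously extendable to $t=0$, so $L(\phi):=\lim_{t\to0^+}\int_{\RR^N}u(t)\phi$ exists. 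Approximating an arbitrary $\phi\in\cont_c(\RR^N)$ uniformly by $\cont^\infty_c$ functions supported in a fixed compact $K$ and using $\sup_t\int_Ku(t)\le M/\min_K\vartheta$, the limit $L(\phi)$ exists for every $\phi\in\cont_c(\RR^N)$. Since $u\ge0$, $L$ is a positive linear functional on $\cont_c(\RR^N)$, bounded by $\|\phi\|_\infty\,M/\min_K\vartheta$ on each $\cont_c(K)$, so the Riesz representation theorem yields a unique $\mu\in\mathfrak{M}^+(\RR^N)$ with $L(\phi)=\int_{\RR^N}\phi\,d\mu$ for all $\phi\in\cont_c(\RR^N)$. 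This is the first assertion.

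For~\eqref{eq:muweighted}, note that $\vartheta$ is itself an admissible test function and $|(-\Delta)^{\sigma/2}\vartheta|\le|x|^{-(N+\sigma)}$, so the previous estimate applies verbatim with $\phi=\vartheta$ and shows that $\ell:=\lim_{t\to0^+}\int_{\RR^N}u(t)\vartheta$ exists and is $\le M$. With cut-offs $\eta_R\in\cont^\infty_c(\RR^N)$, $0\le\eta_R\le1$, $\eta_R\equiv1$ on $B_R$ and $\eta_R\equiv0$ off $B_{2R}$, the functions $\vartheta\eta_R\in\cont_c(\RR^N)$ give $\int_{\RR^N}\vartheta\eta_R\,d\mu=\lim_{t\to0^+}\int_{\RR^N}u(t)\vartheta\eta_R\le\ell$, and monotone convergence then yields $\int_{\RR^N}\vartheta\,d\mu\le\ell<\infty$. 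To obtain equality it remains to prove the uniform‑in‑time tail bound $\lim_{R\to\infty}\sup_{0<t\le T/2}\int_{|x|>R}u(x,t)\vartheta(x)\,dx=0$; granting it, $\int_{\RR^N}u(t)\vartheta=\int_{\RR^N}u(t)\vartheta\eta_R+o_R(1)$ uniformly in $t$, so letting $t\to0^+$ and then $R\to\infty$ gives $\ell\le\int_{\RR^N}\vartheta\,d\mu$.

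The tail bound is the main obstacle. I would prove it by testing~\eqref{eq:salto_tiempo} against $\psi_R:=\vartheta(1-\eta_R)=\vartheta-\vartheta\eta_R$ — a nonnegative $\cont^2$ function equal to $\vartheta$ on $\{|x|>2R\}$, vanishing on $B_R$, with $(-\Delta)^{\sigma/2}\psi_R$ decaying like $|x|^{-(N+\sigma)}$, hence admissible as the difference of $\vartheta$ and a $\cont^2_c$ function — between a fixed $t^*:=T/2$ and $t<t^*$, which gives
\begin{equation*}
    \int_{|x|>2R}u(t)\vartheta\le\int_{\RR^N}u(t)\psi_R=\int_{\RR^N}u(t^*)\psi_R+\int_t^{t^*}\!\!\int_{\RR^N}u^m(s)\,(-\Delta)^{\sigma/2}\psi_R\,dx\,ds.
\end{equation*}
The crux is the bound $\sup_{0<s\le t^*}\big|\int_{\RR^N}u^m(s)\,(-\Delta)^{\sigma/2}\psi_R\big|\le\delta(R)\to0$, which I would establish zone by zone: on $\{|x|\le R/2\}$ one has $\psi_R\equiv0$, so $(-\Delta)^{\sigma/2}\psi_R(x)=-C_{N,\sigma}\int_{|y|>R}\psi_R(y)|x-y|^{-(N+\sigma)}dy$ is bounded by $C\int_{|y|>R}\vartheta(y)|y|^{-(N+\sigma)}dy\to0$, while $\int_{|x|\le R/2}u^m$ is controlled by $(\int u\vartheta)^m|B_{R/2}|^{1-m}\vartheta(R/2)^{-m}$; on the transition annulus $\{R/2<|x|<4R\}$ a crude bound $|(-\Delta)^{\sigma/2}\psi_R|\lesssim\vartheta(R)$ together with $\int u^m\le(\int u\vartheta)^m(\mathrm{vol})^{1-m}\vartheta(4R)^{-m}$ and the fact that weights in $\varTheta$ decay faster than $|x|^{-N}$ give a contribution $\lesssim M^m\big(R^N\vartheta(R)\big)^{1-m}\to0$; and on $\{|x|\ge4R\}$ one writes $(-\Delta)^{\sigma/2}\psi_R=(-\Delta)^{\sigma/2}\vartheta-(-\Delta)^{\sigma/2}(\vartheta\eta_R)$, uses $|(-\Delta)^{\sigma/2}\vartheta|\le|x|^{-(N+\sigma)}$ and $|(-\Delta)^{\sigma/2}(\vartheta\eta_R)(x)|\le C\|\vartheta\|_1|x|^{-(N+\sigma)}$ (since $\vartheta\eta_R$ is supported in $B_{2R}$ and $|x-y|\ge|x|/2$ there), and then the $\vartheta^m$–H\"older trick with~\eqref{eq:quotient} to bound the contribution by $M^m$ times a tail of a convergent integral. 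Granting $\delta(R)\to0$: given $\varepsilon>0$, pick $R$ with $t^*\delta(R)<\varepsilon$ and, since $\psi_R\le\vartheta\chi_{\{|x|>R\}}$ and $\int u(t^*)\vartheta<\infty$, also $\int_{\RR^N}u(t^*)\psi_R<\varepsilon$; then $\int_{|x|>2R}u(t)\vartheta\le2\varepsilon$ for all $0<t\le t^*$. This zone‑by‑zone estimate — where the structural hypotheses~\eqref{eq:quotient} on $\vartheta$ are genuinely used — is the technical heart of the proof and parallels~\cite[Theorem~7.2]{BonforteVazquezQuantitativeEstimates}.
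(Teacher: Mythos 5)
Your proof is correct and arrives at the same theorem, but the route differs from the paper's in two places, and in one of them you do genuinely more work.

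For the construction of $\mu$, the paper proceeds by compactness: the weighted $L^1$ estimate of \Cref{re:orderedcomparison} with $v=0$ shows $\sup_n\int_K u(t_n)<\infty$ on every compact $K$, \Cref{re:measurecompactness} extracts a vaguely convergent subsequence, and then~\eqref{eq:salto_tiempo} plus the H\"older bound~\eqref{eq:comprecurrente} shows the limit is sequence-independent. You instead prove directly that $t\mapsto\int u(t)\phi$ is Lipschitz down to $t=0$ for each $\phi\in\cont^\infty_c$, pass to $\cont_c$ by density, and invoke Riesz representation. The two are essentially equivalent — the Lipschitz estimate you use \emph{is} the paper's sequence-independence estimate~\eqref{eq:auxiliarintermedio} — but your packaging avoids the compactness detour and produces $\mu$ and its uniqueness in a single stroke, which is slightly cleaner.

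Where you depart more substantially is in~\eqref{eq:muweighted}. The paper disposes of it in one sentence (``use $\vartheta$ instead of $\phi$ in~\eqref{eq:auxiliarintermedio}''), which proves that $\ell:=\lim_{t\to0^+}\int u(t)\vartheta$ exists, but says nothing explicit about why $\ell=\int\vartheta\,d\mu$ rather than merely $\int\vartheta\,d\mu\le\ell$: vague convergence alone permits mass to escape to spatial infinity, and $\vartheta$ is not compactly supported. You correctly identify this, prove the easy inequality $\int\vartheta\,d\mu\le\ell$ by cutoffs and monotone convergence, and then close the gap with the uniform tail bound $\lim_{R\to\infty}\sup_{0<t\le T/2}\int_{|x|>R}u\vartheta=0$, established by testing~\eqref{eq:salto_tiempo} against $\psi_R=\vartheta(1-\eta_R)$ and a three-zone estimate of $(-\Delta)^{\sigma/2}\psi_R$. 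Your zone-by-zone bounds check out: on $B_{R/2}$ and the annulus you pay a factor $R^{N(1-m)}\vartheta(R)^{-m}$ for $\int u^m$, and the kernel decays fast enough (your crude annulus bound $|(-\Delta)^{\sigma/2}\psi_R|\lesssim\vartheta(R)$ is actually an overcount by a factor $R^\sigma$ compared with the sharp $\vartheta(R)R^{-\sigma}$, but it still closes since $(R^N\vartheta(R))^{1-m}\to0$ for any $\vartheta\in\varTheta$); on $\{|x|\ge4R\}$ the kernel decays like $|x|^{-(N+\sigma)}$ and the second integrability condition in~\eqref{eq:quotient} supplies a vanishing tail. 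In short, your proposal is a correct and more self-contained account: where the paper leans on the reference to~\cite[Theorem 7.2]{BonforteVazquezQuantitativeEstimates} to cover the no-escape-of-mass step, you spell it out.
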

\begin{proof}
    From~\Cref{re:orderedcomparison} and choosing $v=0$, we obtain
    \begin{equation}
        \int_{B_R} u(x,t) \, dx\leq \left(\left( \int_{\RR^N} u(x,T) \vartheta_R(x) \, dx\right)^{1-m} + \frac{C |T-t| }{R^{\sigma - N(1-m)}}\right)^{\frac{1}{1-m}}, 
    \end{equation}
    for $0 \leq t \leq T$. Choose any sequence $\{t_n\}_n \subset [0, \infty)$ such that $\lim_n t_n = 0$. Defining $\mu_n = u(t_n)\, dx$, we obtain through~\Cref{re:measurecompactness} the existence of a subsequence $\{t_{n_k}\}_k$ and a Radon measure $\mu$ such that
    \begin{equation}
        \lim_{k \to \infty} \int_{\RR^N} u(x,t_{n_k}) \phi(x) \, dx = \int_{\RR^N} \phi(x) \, d\mu.
    \end{equation}
    We now check that this measure $\mu$ does not depend on the sequence. From~\eqref{eq:salto_tiempo}, we get 
    \begin{equation} \label{eq:auxiliarintermedio}
        \left|\int_{\RR^N} u(\tau)\phi - \int_{\RR^N}u(t_{n_k}) \phi\right| = \int_{t_{n_k}}^{\tau} \int_{\RR^N} u(s)^m |(-\Delta)^{\sigma/2}\phi| \, ds, \leq C_\phi \int_{t_{n_k}}^\tau \|u(s)\|_{L^1_\vartheta(\RR^N)} \, ds.
    \end{equation}
    Taking $k\to \infty$ first, we see that 
    \begin{equation}
        \lim_{\tau\to 0^+} \int_{\RR^N} u(x,\tau) \phi(x) \, dx = \int_{\RR^N} \phi(x) \, d\mu
    \end{equation}
    for any $\phi \in \cont^\infty_c(\RR^N)$. The limit actually holds for any function in $\cont_c(\RR^N)$ through a mollification argument, see~\Cref{re:acotacionhastacero}. Now, in order to obtain~\eqref{eq:muweighted}, we just use $\vartheta$ instead of $\phi$ in~\eqref{eq:auxiliarintermedio}.
\end{proof}

With this result we now have the appropriate candidates for initial conditions.

\begin{Definition}
    We define $\mathfrak{M}_\sigma$ as the set of nonnegative Radon measures $\mu$ satisfying 
    \begin{equation}
        \int_{\RR^N} \vartheta \, d\mu < \infty
    \end{equation}
    for some $\vartheta \in \varTheta$. 
\end{Definition}

\section{Existence of solutions with initial data a measure} \label{se:existence}

We begin by recalling a result stated in~\cite{VazquezBarenblattnolocal} for the existence of weak solutions arising from a finite measure as initial condition.

\begin{Theorem} \label{re:existenciafinita}
    Let $m>m_c$ and let $\mu$ be a nonnegative finite Radon measure. Then there exists a weak solution $u$ to~\eqref{equation} taking $\mu$ as initial data in the vague topology, which conserves mass, 
    \begin{equation}
        \int_{\RR^N} u(x,t) \, dx = \mu(\RR^N) \quad \text{ for all } t>0.
    \end{equation}
    The solution satisfies the smoothing effect
    \begin{equation}
        \|u(t)\|_\infty \leq C t^{-\alpha} \mu(\RR^N)^{\gamma} \quad \text{ for all } t>0,
    \end{equation}
    with $\alpha = N /(N(m-1)+\sigma)$ and $\gamma = \sigma \alpha / N$, where the constant $C$ depends only on $m, N$, and $\sigma$.
\end{Theorem}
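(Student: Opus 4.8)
The statement coincides with the one established by V\'azquez in \cite{VazquezBarenblattnolocal}; I would reprove it by approximation starting from the $L^1$ theory. \emph{Regularization of the datum.} Take a standard mollifier and set $u_{0,n}=\mu\ast\rho_{1/n}$, so that $u_{0,n}\in L^1(\RR^N)\cap L^\infty(\RR^N)$ is nonnegative, $\int_{\RR^N}u_{0,n}=\mu(\RR^N)$, and $u_{0,n}\rightharpoonup\mu$ in the vague topology. By the known well-posedness in $L^1$ for \eqref{equation} with $m>m_c$, see \cite{dePabloQuiros2012}, there is a weak solution $u_n$ of \eqref{equation} with initial datum $u_{0,n}$; it conserves mass, $\|u_n(t)\|_1=\mu(\RR^N)$ for all $t>0$, satisfies the $L^1$--$L^\infty$ smoothing effect of \Cref{re:smoothing_1}, $\|u_n(t)\|_\infty\le C\,t^{-\alpha}\mu(\RR^N)^\gamma$, the $L^1$ contraction, and, multiplying \eqref{equation} by $u_n^m$, the energy estimate controlling $u_n^m$ in $L^2((\tau,T):\dot H^{\sigma/2}(\RR^N))$ for $0<\tau<T$.

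\emph{Compactness and passage to the limit.} On every strip $\RR^N\times[\tau,T]$ the sequence $u_n$ is bounded in $L^\infty$ by the smoothing effect and $u_n^m$ is bounded in $L^2((\tau,T):\dot H^{\sigma/2})$; since $u_n=(u_n^m)^{1/m}$ and the $u_n$ are uniformly bounded, $u_n$ is bounded in $L^2((\tau,T):H^{s}_{\mathrm{loc}})$ for a suitable $s>0$, while $\partial_t u_n$ is bounded in a space of negative order through the equation. A Fr\'echet--Kolmogorov / Aubin--Lions argument then gives a subsequence with $u_n^m\to w$ strongly in $L^2_{\mathrm{loc}}(\RR^N\times(0,\infty))$ and $u_n\to u$ a.e.\ in $Q$; hence $w=u^m$, and since $u_n^m$ is bounded in $L^2_{\mathrm{loc}}((0,\infty):\dot H^{\sigma/2})$ its a.e.\ limit is also its weak limit there (\Cref{re:BanachSaks} and its corollary). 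Passing to the limit in \eqref{weakuation} shows $u$ is a weak solution in the sense of \Cref{def:weaksolutions}, and a.e.\ convergence together with the uniform bounds gives the smoothing effect $\|u(t)\|_\infty\le C\,t^{-\alpha}\mu(\RR^N)^\gamma$.

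\emph{Initial trace and conservation of mass.} For $\phi\in\cont_c^\infty(\RR^N)$, apply \eqref{eq:salto_tiempo} to $u_n$ and bound the right-hand side by \eqref{eq:comprecurrente} with $\vartheta$ any fixed weight in $\varTheta$ dominating $\phi$ and $(-\Delta)^{\sigma/2}\phi$; this yields $\bigl|\int_{\RR^N}u_n(t)\phi-\int_{\RR^N}u_{0,n}\phi\bigr|\le C_\phi\int_0^t\|u_n(s)\|_1^{m}\,ds=C_\phi\,\mu(\RR^N)^m\,t$, uniformly in $n$. Letting $n\to\infty$ and then $t\to0^+$ gives $\lim_{t\to0^+}\int_{\RR^N}u(t)\phi=\int_{\RR^N}\phi\,d\mu$, and the limit extends to $\phi\in\cont_c(\RR^N)$ by density. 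One inequality for the mass follows from Fatou, $\int_{\RR^N}u(t)\le\liminf_n\int_{\RR^N}u_n(t)=\mu(\RR^N)$. For the reverse, take a radial smooth cut-off $\phi_R$ with $\phi_R\equiv1$ on $B_R$, $\operatorname{supp}\phi_R\subset B_{2R}$, use $(-\Delta)^{\sigma/2}\phi_R(x)=R^{-\sigma}[(-\Delta)^{\sigma/2}\phi_1](x/R)$, and split $\int_{\RR^N}u_n^m|(-\Delta)^{\sigma/2}\phi_R|$ over $\{|x|\le2R\}$, where $|(-\Delta)^{\sigma/2}\phi_R|\le C R^{-\sigma}$, and $\{|x|>2R\}$, where $|(-\Delta)^{\sigma/2}\phi_R(x)|\le C R^N(|x|+R)^{-N-\sigma}$; applying H\"older with exponents $1/m$ and $1/(1-m)$ to each piece bounds the whole integral by $C\,R^{-(\sigma-N(1-m))}\mu(\RR^N)^m$. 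Since $m>m_c$ forces $\sigma-N(1-m)>0$, integrating \eqref{eq:salto_tiempo} with $\phi=\phi_R$ gives $\int_{\RR^N}u_n(t)\phi_R\ge\int_{\RR^N}u_{0,n}\phi_R-t\,\varepsilon(R)$ with $\varepsilon(R)\to0$; letting $n\to\infty$ and then $R\to\infty$ yields $\int_{\RR^N}u(t)\ge\mu(\RR^N)$, hence equality. The same tail bound upgrades the local convergence to $u\in\cont((0,\infty):L^1)$.

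\emph{Main obstacle.} The delicate points are the compactness needed to pass to the limit in the nonlinear term --- there is no a priori bound on $\nabla u$, only on $\nabla u^m$, so one must run the Aubin--Lions argument on $u_n^m$ and transfer the a.e.\ convergence back to $u_n$ --- and, tied to it, the control of the tails $\int_{\RR^N}u_n^m|(-\Delta)^{\sigma/2}\phi_R|$ that rules out loss of mass at infinity; this is exactly where the hypothesis $m>m_c$, equivalently $\sigma>N(1-m)$, is essential.
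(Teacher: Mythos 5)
The paper offers no proof of this theorem: it is imported verbatim from~\cite{VazquezBarenblattnolocal} (``We begin by recalling a result stated in\ldots''), so there is no in-paper argument to measure yours against. Your reconstruction --- mollify $\mu$, solve with the $L^1$ theory of~\cite{dePabloQuiros2012}, pass to the limit using the uniform smoothing, mass and energy bounds, and recover the initial trace and conservation of mass from the tail estimate $\|(-\Delta)^{\sigma/2}\phi_R\|_{1/(1-m)}\le CR^{-(\sigma-N(1-m))}$, which is where $m>m_c$ enters --- is the standard construction and is essentially what V\'azquez does; it is also the same scheme the paper itself deploys later in the harder weighted setting (\Cref{re:measurecomparison}, \Cref{re:existencia}), where the approximating data are $hu(1/n)$ rather than mollifications. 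Two steps are stated more confidently than they are justified. First, the compactness transfer from $u_n^m$ to $u_n$ is fine because $v\mapsto v^{1/m}$ is Lipschitz on bounded sets for $1/m>1$ and $u_n$ is uniformly bounded on each strip $\RR^N\times[\tau,T]$, but you should say this explicitly rather than appeal to ``a suitable $s>0$''; the Aubin--Lions argument must then be run on balls with a diagonal extraction. Second, the claim that the tail bound ``upgrades the local convergence to $u\in\cont((0,\infty):L^1)$'' is the thinnest point, since time-continuity in $L^1$ is part of \Cref{def:weaksolutions}; the cleanest repair is to note that $u(\tau)\in L^1(\RR^N)$ for a.e.\ $\tau>0$ and that, by uniqueness of $L^1$ solutions, $u$ coincides on $[\tau,\infty)$ with the semigroup solution emanating from $u(\tau)$, which lies in $\cont([\tau,\infty):L^1(\RR^N))$. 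Neither point is a genuine gap.
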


We now prove the following auxiliary result, which is fundamental both for the existence and uniqueness results, since it will allow us to define monotone sequences of weak solutions converging to some weighted solution. 

\begin{Lemma} \label{re:measurecomparison}
    Let $m_c < m < 1$, $\mu \in \mathfrak{M}_\sigma$, and let $u$ be a weighted solution of~\eqref{equation} with initial trace $\mu$. Then, for $h_1, h_2 \in \cont_c(\RR^N)$ with $0 \leq h_1 \leq h_2 \leq 1$, given a weak solution $w_1$ with initial condition $h_1 \mu$ there exists a weak solution $w_2$ with initial condition $h_2 \mu$ such that $w_1 \leq w_2 \leq u$ almost everywhere. 
\end{Lemma}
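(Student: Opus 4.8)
The plan is to obtain $w_2$ as a limit of weak $L^1$--solutions whose data, prescribed at a sequence of positive times shrinking to $0$, are squeezed pointwise between the values of $w_1$ and $u$ and approach $h_2\mu$. First note that, since $h_1,h_2\in\cont_c(\RR^N)$ and $\mu$ is locally finite, $h_1\mu$ and $h_2\mu$ are finite measures. Being a weak solution, $w_1\in\cont((0,\infty):L^1(\RR^N))$ and, by mass conservation of $L^1$--solutions, $\|w_1(\cdot,t)\|_1$ is a finite constant; restarting from a time $\varepsilon>0$ and letting $\varepsilon\to0$, $w_1$ obeys the bound of \Cref{re:smoothing_1}. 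Since $u$ is a weighted solution and $h_2$ has compact support, $h_2\,u(\cdot,t)\in L^1(\RR^N)$ with $\|h_2\,u(\cdot,t)\|_1=\int_{\RR^N}h_2\,u(\cdot,t)\to\int_{\RR^N}h_2\,d\mu$ as $t\to0^+$ by \Cref{re:initialtraces}, and $\sup_{t>0}\int_{\RR^N}u(\cdot,t)\,\vartheta<\infty$ by \Cref{re:acotacionhastacero}.

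I would then establish the comparison $w_1\le u$. Both are weighted solutions with $h_1\mu\le\mu$ as their initial traces, so by \Cref{re:weightedcomparison} it suffices to find one time $\tau>0$ with $w_1(\cdot,\tau)\le u(\cdot,\tau)$; this is obtained by a comparison at the initial trace --- approximate the finite measure $h_1\mu$ from below by finite measures with bounded, compactly supported densities, compare the corresponding solutions with $u$ (which carries the larger trace $\mu$), and let the approximation close up. In the way the lemma is used --- iteratively, starting from $w_1\equiv0$ --- this inequality is in fact already available at each step.

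Next, fix $\tau_k\downarrow0$ and set $\Psi_k:=w_1(\cdot,\tau_k)\vee\big(h_2\,u(\cdot,\tau_k)\big)\in L^1(\RR^N)$, a family bounded in $L^1$ and satisfying $h_2\,u(\cdot,\tau_k)\le\Psi_k\le u(\cdot,\tau_k)$ (the upper bound uses $w_1\le u$). Let $w_2^k$ be the unique weak solution of \eqref{equation} with $L^1$--datum $\Psi_k$ at $t=0$. Comparison (\Cref{re:weightedcomparison} together with the $L^1$--theory) gives $w_1(\cdot,t+\tau_k)\le w_2^k(\cdot,t)\le u(\cdot,t+\tau_k)$ for all $t\ge0$. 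The smoothing effect bounds $\{w_2^k\}$ in $L^\infty_{\mathrm{loc}}(Q)$ uniformly in $k$, and the equation provides equicontinuity in time, so along a subsequence $w_2^k\to w_2$ in $L^1_{\mathrm{loc}}(Q)$ and a.e.; letting $\tau_k\to0$ and using the time--continuity of $w_1$ and $u$ yields $w_1\le w_2\le u$. Passing to the limit in \eqref{weakuation} is routine once $(w_2^k)^m\to w_2^m$ in $L^1_{\mathrm{loc}}$, which follows from the a.e.\ convergence and the uniform bounds --- or, bypassing pointwise arguments, from \Cref{re:BanachSaks} applied to the weakly convergent sequence $\{(w_2^k)^m\}$; the energy bounds upgrade $w_2$ to a weak solution.

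It remains to identify the initial trace of $w_2$ as $h_2\mu$, which is the main obstacle. By \eqref{eq:salto_tiempo} and the uniform $L^1_\vartheta$--bound on $\{\Psi_k\}$, the maps $t\mapsto\int_{\RR^N}w_2^k(\cdot,t)\,\phi$ are equicontinuous at $t=0$ uniformly in $k$, so $\lim_{t\to0^+}w_2(\cdot,t)$ coincides with the vague limit of $\{\Psi_k\}$. Since $h_2\,u(\cdot,\tau_k)\le\Psi_k$ and $h_2\,u(\cdot,\tau_k)\rightharpoonup h_2\mu$, the remaining point is to prove $\Psi_k\rightharpoonup h_2\mu$, equivalently $\big(w_1(\cdot,\tau_k)-h_2\,u(\cdot,\tau_k)\big)_+\rightharpoonup0$. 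One has $w_1(\cdot,\tau_k)\rightharpoonup h_1\mu\le h_2\mu$ (here $h_1\le h_2$ enters) and $h_2\,u(\cdot,\tau_k)\rightharpoonup h_2\mu$, but the positive part is not continuous for weak convergence, so this does not follow from the vague limits alone: one must exploit that $w_1$ and $u$ are genuine solutions of the same equation --- so that $w_1(\cdot,\tau_k)$ cannot concentrate or oscillate above a solution with trace $h_2\mu$ as $t\to0^+$ --- via an equi--integrability/tightness argument for $\{w_1(\cdot,\tau_k)\}$ together with the pointwise bound $w_1\le u$. The preliminary comparison $w_1\le u$ is a secondary but still nontrivial ingredient.
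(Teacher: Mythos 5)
Your overall architecture is the same as the paper's: restart weak $L^1$-solutions from data prescribed at times $\tau_k\downarrow 0$, squeeze them between $w_1$ and $u$ by comparison, use the smoothing effect plus compactness and energy estimates to pass to the limit, and identify the initial trace via~\eqref{eq:salto_tiempo}. The difference is the choice of approximating data, and that choice is where your argument breaks. You take $\Psi_k=w_1(\cdot,\tau_k)\vee h_2u(\cdot,\tau_k)$ and then must show $\Psi_k\rightharpoonup h_2\mu$, i.e.\ $(w_1(\cdot,\tau_k)-h_2u(\cdot,\tau_k))_+\rightharpoonup 0$. You correctly observe that this does not follow from the vague limits $w_1(\tau_k)\rightharpoonup h_1\mu$ and $h_2u(\tau_k)\rightharpoonup h_2\mu$, and you leave it to an unspecified ``equi-integrability/tightness argument.'' That is not a routine step; it is the crux of the lemma in your formulation, and the pointwise bound $w_1\le u$ only gives $(w_1(\tau_k)-h_2u(\tau_k))_+\le(1-h_2)u(\tau_k)\rightharpoonup(1-h_2)\mu\neq 0$, so no soft argument closes it. Without it, the trace of your $w_2$ could exceed $h_2\mu$. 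The paper sidesteps the issue entirely: it takes the data $h_2\,u(\cdot,1/n)$, whose vague convergence to $h_2\mu$ is immediate from the definition of $u$'s initial trace (test against $h_2\phi\in\cont_c$), and never forms a maximum with $w_1$.

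The second gap is the preliminary comparison $w_1\le u$ for an \emph{arbitrary} given weak solution $w_1$ with trace $h_1\mu$. You reduce it to finding one time $\tau>0$ with $w_1(\cdot,\tau)\le u(\cdot,\tau)$, but producing such a time from ordered \emph{traces} is essentially the statement you are trying to prove; \Cref{re:weightedcomparison} only propagates a pointwise ordering that already holds at some positive time. The paper's resolution of both difficulties is the same: the lower comparison is obtained at the level of the approximants, since $h_1u(\cdot,1/n)\le h_2u(\cdot,1/n)\le u(\cdot,1/n)$ holds pointwise at the positive time $1/n$, so \Cref{re:weightedcomparison} gives $w_{1,n}\le w_{2,n}\le u(\cdot,\cdot+1/n)$ and the ordering survives the limit. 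In other words, in the paper (and in the way the lemma is actually used, iteratively) $w_1$ is not arbitrary but is itself the limit of the solutions with data $h_1u(\cdot,1/n)$, and the lemma should be read and proved in that constructive sense. If you rewrite your proof with data $h_2u(\cdot,\tau_k)$ instead of $\Psi_k$, and derive $w_1\le w_2$ from the ordering of the approximating data rather than from a maximum, the rest of your argument (smoothing, Arzel\`a--Ascoli or $L^1_{\mathrm{loc}}$ compactness, energy bounds, and the trace identification via~\eqref{eq:salto_tiempo}) goes through as you describe.
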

\begin{proof}
    Let us prove that for $h \in \cont_c(\RR^N)$ with $0 \leq h \leq 1$ there exists a solution $w$ such that $w \leq u$ almost everywhere. Proving the existence of the bound from below is completely analogous. 
    
    Let $w_n$ be the weak energy solution of~\eqref{equation} with initial condition $hu(1/n) \in L^1(\RR^N)$. It is obvious that $hu(1/n) \leq u(1/n)$. Then, thanks to the comparison principle of~\Cref{re:weightedcomparison}, we get that~$w_n(x,t) \leq u(x, t+ 1/n)$. First, observe that
    \begin{equation} \label{eq:integrallimit1}
        \lim_{n \to \infty} \int_{\RR^N} hu(1/n) \,dx = \int_{\RR^N} h \,d\mu,
    \end{equation}
    using $h$ as a test function and the fact that $u$ has $\mu$ as initial data.

    Now, let us show that $w_n$ converges, up to a subsequence, to a weak solution $w$ with initial condition $h\mu$. Observe that, thanks to the smoothing effect~\eqref{eq:smoothingeq} and the previous argument, we get equiboundedness of $w_n$,
    \begin{equation}
        \|w_n(t)\|_\infty \leq C t^{-\alpha}\|hu(1/n)\|_{1}^{\gamma} \leq Ct^{-\alpha} \quad \text{ for all } t>0.
    \end{equation}
    With this and the positivity result from~\cite{PositivityAndAsymptoticBehaviour}, we get that these solutions have the same Hölder constant and exponent, at least on compact sets, see~\cite{dePabloQuiros2018}. This implies that they are equicontinuous on those sets. Thus, using Arzelà-Ascoli’s Theorem and a diagonal argument we get that there is a subsequence converging on $\RR^N$ to some $w$. It is easy to see that this $w$ is a very weak solution of~\eqref{equation}. We have to check that $w$ is a weak energy solution with initial condition $h\mu$. We first prove that it belongs to the proper energy space, that is, $w^m \in L^2_{\text{loc}}((0,\infty); \dot{H}^{\sigma/2})$. Solutions $w_n$ verify the inequality
    \begin{equation}
        \int_{\frac{1}{T}}^T |(-\Delta)^{\sigma/4}w_n^m|^2 \, dt \leq \left\|\left(w_n\left( 1/T \right) \right) \right\|_{\infty}^m \left\|w_n\left( 1/T \right) \right\|_{1} \leq C T^{-m\alpha} \|h u(1/n)\|_1^{m+1} \leq C T^{-m\alpha},
    \end{equation}
    for any $T>1$. This previous estimate, at least formally, can be quickly proved using $w_n$ as a test function; while the rigorous argument requires the standard Steklov averages. The right-hand side does not depend on $n$, which can be seen taking advantage of the $L^1$ boundedness of $hu(1/n)$ we have been using so far. Then, thanks to Fatou's Lemma, we conclude that $w^m$ belongs to the energy space. Hence, $w$ is a weak solution.

    To check the initial condition of $w$, we use again the expression~\eqref{eq:salto_tiempo} for $w_n$, obtaining
    \begin{equation}
        \begin{aligned}
            \left| \int_{\RR^N} (w_n(t) - hu(1/n)) \phi  \right| & \leq  \int_0^t \int_{\RR^N} w_n^m \left|(-\Delta)^{\sigma/2}\phi \right| \leq t \|w_n\|_1^m \|(-\Delta)^{\sigma/2}\phi\|_{\frac{1}{1-m}} \\
                                                                & \leq t \|hu(1/n)\|_1^m \|(-\Delta)^{\sigma/2}\phi\|_{\frac{1}{1-m}}.
        \end{aligned}
    \end{equation}
    Therefore, taking first the limit $n\to \infty$ and then $t\to 0$ we find
    \begin{equation}
        \lim_{t\to 0} \int_{\RR^N} w(t) \phi = \int_{\RR^N}h \, d\mu.
    \end{equation}
    On the other hand, since we had $w_n(x,t) \leq u(x, t+ 1/n)$ we get that $w(x,t) \leq u(x,t)$ almost everywhere. 
\end{proof}

Once we have the previous result, the following proof, which is the goal of this section, is a straightforward adaptation of~\cite[Theorem 3.1]{BonforteVazquezQuantitativeEstimates}.

\begin{Theorem} \label{re:existencia}
    Let $m_c<m<1$ and $\mu \in \mathfrak{M}_\sigma$. Then there exists ${u \in \cont((0,\infty) \! : \! L^1_\vartheta)}$ a nonnegative weighted solution to equation~\eqref{equation} with $\mu$ as its initial condition.
\end{Theorem}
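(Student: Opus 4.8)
The plan is to construct $u$ as the pointwise increasing limit of a sequence of weak solutions with truncated initial data, and then to recover the initial trace by pairing a crude lower bound against arbitrary test functions with an \emph{exact} identity for the weighted mass $\int_{\RR^N}\vartheta\,d\mu$.

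First I fix $\vartheta\in\varTheta$ with $\int_{\RR^N}\vartheta\,d\mu<\infty$ and choose $h_k\in\cont_c(\RR^N)$ with $0\le h_k\le h_{k+1}\le 1$ and $h_k\uparrow 1$ pointwise, so that each $\mu_k:=h_k\mu$ is a finite nonnegative Radon measure. Mollifying with a standard mollifier $\eta_j$, I set $\mu_{k,j}:=\mu_k*\eta_j\in L^1(\RR^N)\cap L^\infty(\RR^N)$, which is nonnegative, nondecreasing in $k$ for each fixed $j$, has $\|\mu_{k,j}\|_1=\mu_k(\RR^N)$, and satisfies $\mu_{k,j}\rightharpoonup\mu_k$ as $j\to\infty$. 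Let $u_{k,j}$ be the weak $L^1$ energy solution of~\eqref{equation} with datum $\mu_{k,j}$ (see~\cite{dePabloQuiros2012}); by $L^1$-comparison $0\le u_{k,j}\le u_{k+1,j}$, and by~\Cref{re:smoothing_1} one has $\|u_{k,j}(t)\|_\infty\le Ct^{-\alpha}\mu_k(\RR^N)^\gamma$ uniformly in $j$. For fixed $k$ the family $\{u_{k,j}\}_j$ is locally equibounded and, by the positivity result of~\cite{PositivityAndAsymptoticBehaviour} and the Hölder regularity of~\cite{dePabloQuiros2018}, locally equicontinuous; Arzelà--Ascoli and a diagonal argument over $k$ produce one subsequence along which $u_{k,j}\to u_k$ locally uniformly in $Q$ for every $k$, with $0\le u_k\le u_{k+1}$. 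Reasoning as in the proof of~\Cref{re:measurecomparison} (Fatou for the energy estimate, the jump identity~\eqref{eq:salto_tiempo} for the trace), each $u_k$ is a weak solution whose initial trace is $\mu_k=h_k\mu$. I then set $u:=\lim_{k\to\infty}u_k$, the pointwise nondecreasing limit; this construction is the one underlying~\Cref{re:existenciafinita} and~\cite[Theorem 3.1]{BonforteVazquezQuantitativeEstimates}.

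Next I verify that $u$ is a nonnegative weighted solution. Applying~\Cref{re:orderedcomparison} to $u_k$ and $v=0$ with $\vartheta_R(x)=\vartheta(x/R)$, and letting $\tau\to0^+$ using~\Cref{re:initialtraces} for $u_k$ together with $0\le h_k\le1$, I obtain, with $C$ independent of $k$,
\begin{equation}
    \Big(\int_{\RR^N}u_k(t)\,\vartheta_R\,dx\Big)^{1-m}\le\Big(\int_{\RR^N}\vartheta_R\,d\mu\Big)^{1-m}+\frac{Ct}{R^{\,\sigma-N(1-m)}},\qquad t\ge 0.
\end{equation}
Letting $k\to\infty$ (monotone convergence inside the integral, then continuity of $s\mapsto s^{1-m}$) the same bound holds for $u$; with $R=1$ it shows $u(t)\in L^1_\vartheta$ for every $t>0$, hence $u\in L^1_\text{loc}((0,\infty)\!:\!L^1_\vartheta)$. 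Since $u_k\uparrow u$ and $u_k^m\uparrow u^m$, and since $\int_{\RR^N}u^m(t)\,|(-\Delta)^{\sigma/2}\zeta|\,dx<\infty$ for each $\zeta\in\cont^\infty_c(Q)$ by the computation~\eqref{eq:comprecurrente} and~\eqref{eq:quotient} (using the decay of $(-\Delta)^{\sigma/2}\zeta$), dominated convergence lets me pass to the limit in~\eqref{veryweakuation} written for $u_k$. Thus $u$ is a very weak solution, hence a weighted solution, and by~\Cref{re:continuityremark} it lies in $\cont((0,\infty)\!:\!L^1_\vartheta)$; it is nonnegative as an increasing limit of nonnegative functions.

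Finally I identify the trace. By~\Cref{re:initialtraces}, $u$ possesses a unique initial trace $\nu\in\mathfrak{M}_\sigma$ and $\int_{\RR^N}\vartheta\,d\nu=\lim_{t\to0^+}\int_{\RR^N}u(t)\vartheta\,dx$. For $0\le\phi\in\cont_c(\RR^N)$, monotonicity gives $\int_{\RR^N}u(t)\phi\ge\int_{\RR^N}u_k(t)\phi\to\int_{\RR^N}\phi\,d(h_k\mu)$ as $t\to0^+$, so $\int_{\RR^N}\phi\,d\nu\ge\int_{\RR^N}\phi\,d(h_k\mu)$ and, letting $k\to\infty$, $\int_{\RR^N}\phi\,d\nu\ge\int_{\RR^N}\phi\,d\mu$; that is, $\nu\ge\mu$ as measures. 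On the other hand, the displayed bound with $R=1$ gives $\limsup_{t\to0^+}\int_{\RR^N}u(t)\vartheta\le\int_{\RR^N}\vartheta\,d\mu$, while $\int_{\RR^N}u(t)\vartheta\ge\int_{\RR^N}u_k(t)\vartheta\to\int_{\RR^N}\vartheta\,d(h_k\mu)$ forces $\liminf_{t\to0^+}\int_{\RR^N}u(t)\vartheta\ge\int_{\RR^N}\vartheta\,d\mu$; hence $\int_{\RR^N}\vartheta\,d\nu=\int_{\RR^N}\vartheta\,d\mu$. Since $\nu-\mu\ge0$, $\vartheta>0$ everywhere, and $\int_{\RR^N}\vartheta\,d(\nu-\mu)=0$, we conclude $\nu=\mu$, which finishes the proof. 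The main obstacle is precisely this last step — the inequality $\nu\le\mu$: since $\mu_k(\RR^N)\uparrow\mu(\RR^N)$ may be infinite, there is no uniform $L^\infty$ bound on the $u_k$ and one cannot dominate $u$ by a solution with datum $\mu$; instead one extracts the sharp weighted-mass identity $\int\vartheta\,d\nu=\int\vartheta\,d\mu$ from the fact that~\Cref{re:orderedcomparison} controls $t\mapsto(\int u_k(t)\vartheta_R\,dx)^{1-m}$ Lipschitz-continuously down to $t=0$ uniformly in $k$, and then uses $\vartheta>0$ to upgrade it.
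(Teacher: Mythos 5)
Your proof is correct and follows the same skeleton as the paper's — a monotone increasing family of weak solutions with truncated finite data $h_k\mu$, the uniform weighted estimate from \Cref{re:orderedcomparison}, and a passage to the limit — but it differs in the two places where the real work is done. First, you build the ordered sequence by mollifying $h_k\mu$ and invoking the $L^1$-comparison principle for weak energy solutions, whereas the paper obtains the ordering from \Cref{re:measurecomparison}; your route has the advantage of not invoking that lemma (whose statement presupposes a weighted solution with trace $\mu$, i.e.\ the object being constructed), at the cost of repeating its compactness and trace-passage arguments for the limit $j\to\infty$, which you do correctly (uniform smoothing bound, equicontinuity via~\cite{PositivityAndAsymptoticBehaviour, dePabloQuiros2018}, Fatou for the energy, the jump identity~\eqref{eq:salto_tiempo} for the trace). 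Second, for identifying the initial trace of $u$, the paper estimates $\bigl|\int u_n(t)\phi - \int h_n\phi\,d\mu\bigr|$ directly through~\eqref{eq:salto_tiempo} and~\eqref{eq:comprecurrente} and sends $n\to\infty$, $t\to 0$; you instead let \Cref{re:initialtraces} produce a trace $\nu$, prove $\nu\ge\mu$ by monotonicity and $\int\vartheta\,d\nu=\int\vartheta\,d\mu$ by squeezing the weighted mass between the limit of $\int u_k(t)\vartheta$ and the bound from \Cref{re:orderedcomparison}, and conclude $\nu=\mu$ from the strict positivity of $\vartheta$. This squeeze is a clean and fully rigorous alternative; the one point to state explicitly is that $\lim_{t\to0^+}\int u_k(t)\vartheta\,dx=\int\vartheta\,d(h_k\mu)$ holds because each $u_k$, having conserved finite mass and $\vartheta$ being bounded, is itself a weighted solution to which~\eqref{eq:muweighted} applies.
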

\begin{proof}
    Choose a sequence of smooth compactly supported functions $\{h_n\}_n$ such that $h_n \leq h_{n+1}$ and~$\lim_{n\to \infty} h_n = 1$. Let $u_1$ be the solution with initial condition $h_1 \mu$, which exists due to~\Cref{re:existenciafinita}. Now, define $u_{2}$ as the weak solution with initial data $h_{2}\mu$ such that $u_2 \geq u_1$, which exists thanks to~\Cref{re:measurecomparison}. Iterating this process, we have a monotone sequence $\{u_n\}_n$ of weak solutions. Now, using~\Cref{re:orderedcomparison}, we obtain that 
    \begin{equation}
        \left( \int_{\RR^N} u_n(x,t) \vartheta_R(x) \, dx\right)^{1-m} \leq \left( \int_{\RR^N} u_n(x,\tau ) \vartheta_R(x) \, dx\right)^{1-m} + \frac{C |t-\tau| }{R^{\sigma - N(1-m)}}.
    \end{equation}
    Observe that taking limits on the right-hand side, first $\tau \to 0$ and then $n \to \infty$, yields
    \begin{equation}
        \left( \int_{\RR^N}  \vartheta_R \, d\mu \right)^{1-m} + \frac{C t }{R^{\sigma - N(1-m)}}.
    \end{equation}
    Thus, using monotonicity we can take the limit in the previous expression to deduce that $u_n$ converges to a function ${u \in L^\infty_{\text{loc}}((0,\infty) \! : \! L^1_\vartheta)}$ satisfying
    \begin{equation}
        \left( \int_{\RR^N} u(x,t) \vartheta_R(x) \, dx\right)^{1-m} \leq \left( \int_{\RR^N} \vartheta_R \, d\mu \right)^{1-m} + \frac{C t }{R^{\sigma - N(1-m)}}.
    \end{equation}
    It is standard to check that $u$ is a weighted solution. It remains to see
     that $u$ attains $\mu$ as its initial condition. From~\eqref{eq:salto_tiempo}, we get
    \begin{equation}
        \left|\int_{\RR^N} u_n(t)\phi- \int_{\RR^N}h_n \phi \, d\mu \right|  \leq \int_0^{t} \int_{\RR^N} u_n^m(s) |(-\Delta)^{\sigma/2}\phi| \, ds \leq \int_0^{t} \int_{\RR^N} u^m(s) \vartheta^m \frac{|(-\Delta)^{\sigma/2}\phi|}{\vartheta^m} \, ds. 
    \end{equation} 
    Since ${u \in \cont((0,\infty) \! : \! L^1_\vartheta)}$, we use~\eqref{eq:comprecurrente} again and take the limit $n \to \infty$ first and then $t \to 0$ to conclude 
    \begin{equation} \tag*{ }
        \lim_{t\to 0^+} \int_{\RR^N} u(x,t) \phi(x) \, dx = \int_{\RR^N} \phi \, d\mu. \qedhere
    \end{equation} 
\end{proof}

\section{Uniqueness} \label{se:uniqueness}

This section requires several lemmas before establishing the main results. 

\subsection{Auxiliary results}

Let us start by showing that the potential $U = (\mathcal{I}_\sigma * u)$ is nonincreasing in time. 

\begin{Lemma} \label{re:monotoniapotencial}
    Let $u$ be a nonnegative weak solution of~\eqref{equation} and $\mathcal{I}_\sigma$ be the Riesz potential. If we denote $U(x,t) = (\mathcal{I}_\sigma * u)(x,t)$ the potential of $u$, then, for every $x \in \RR^N$, $U(x,t)$ is nonincreasing as $t$ grows, 
    \begin{equation}
        U(x,t_1) \geq U(x,t_2), \qquad 0 \leq t_1 \leq t_2.
    \end{equation}
\end{Lemma}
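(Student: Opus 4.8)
The plan is to make rigorous the formal identity $\partial_t U=\mathcal{I}_\sigma*\partial_t u=-\,\mathcal{I}_\sigma*(-\Delta)^{\sigma/2}u^m=-\,u^m\le 0$, by testing the very weak formulation against Riesz potentials of nonnegative test functions. First I would check that $U(\cdot,t)$ makes sense and is continuous for every $t>0$: for any $\tau>0$ the shifted function $u(\cdot,\tau+\cdot\,)$ is a weak solution with initial datum $u(\tau)\in L^1(\RR^N)$, so by the smoothing effect (\Cref{re:smoothing_1}, which is where the hypothesis $m>m_c$ enters) one has $u(t)\in L^1(\RR^N)\cap L^\infty(\RR^N)$ for every $t>0$; since $N>\sigma$ the kernel $\mathcal{I}_\sigma$ is locally integrable and vanishes at infinity, whence $U(\cdot,t)=\mathcal{I}_\sigma*u(\cdot,t)$ is finite and continuous on $\RR^N$ for each $t>0$ (at the very least it is the potential of the measure $u(t)\,dx$, hence lower semicontinuous, cf.\ \Cref{re:mark}).

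Next, fix $0<t_1<t_2$ and $\psi\in\cont^\infty_c(\RR^N)$ with $\psi\ge0$, and set $\Psi=\mathcal{I}_\sigma*\psi$, so that $\Psi\in\cont^\infty(\RR^N)$, $\Psi>0$, $\Psi(x)\le C(1+|x|)^{-(N-\sigma)}$ and $(-\Delta)^{\sigma/2}\Psi=\psi$. Because $\Psi$ is not compactly supported I would apply~\eqref{eq:salto_tiempo} with the admissible test function $\phi=\Psi\,\eta_R$, where $\eta_R$ is a smooth radial cut-off with $0\le\eta_R\le1$, $\eta_R\equiv1$ on $B_R$, $\operatorname{supp}\eta_R\subset B_{2R}$, $|\nabla\eta_R|\le C/R$ and $|D^2\eta_R|\le C/R^2$. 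Splitting $\Psi(x)\eta_R(x)-\Psi(y)\eta_R(y)=\eta_R(x)(\Psi(x)-\Psi(y))+\Psi(y)(\eta_R(x)-\eta_R(y))$ and using $\eta_R\psi=\psi$ once $R$ exceeds $\operatorname{supp}\psi$, one gets $(-\Delta)^{\sigma/2}(\Psi\eta_R)=\psi+E_R$ with
\[
  E_R(x)=C_{N,\sigma}\,\text{P.V.}\!\int_{\RR^N}\Psi(y)\,\frac{\eta_R(x)-\eta_R(y)}{|x-y|^{N+\sigma}}\,dy ,
\]
and~\eqref{eq:salto_tiempo} becomes
\[
  \int_{\RR^N}\big(u(t_2)-u(t_1)\big)\,\Psi\eta_R\,dx
  =-\int_{t_1}^{t_2}\!\!\int_{\RR^N}u^m\,\psi\,dx\,ds
   -\int_{t_1}^{t_2}\!\!\int_{\RR^N}u^m\,E_R\,dx\,ds .
\]
On the left, $0\le\Psi\eta_R\le\Psi\in L^\infty$ and $u(t_i)\in L^1$, so dominated convergence and Tonelli give $\int(u(t_2)-u(t_1))\Psi\eta_R\to\int(u(t_2)-u(t_1))\Psi=\int(U(t_2)-U(t_1))\psi$ as $R\to\infty$. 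For the error term I would establish an $R$-uniform bound of the form $|E_R(x)|\le C\min(1,|x|^{-N})$: for $|x|\le R/2$ only $|y|>R$ contributes and there $|x-y|\gtrsim|y|$, $\Psi(y)\lesssim|y|^{-(N-\sigma)}$, giving $|E_R|\lesssim R^{-N}$; for $R/2<|x|\lesssim R$ one estimates directly; and for $|x|\gtrsim R$ one uses $\eta_R(x)=0$ together with $|x-y|\sim|x|$ on $\operatorname{supp}\eta_R$ (a symmetrization of the principal value, absorbing the part of $\eta_R(x)-\eta_R(y)$ linear in $x-y$, is convenient when $\sigma\ge1$). Since $u(s)\in L^1\cap L^\infty$ forces $u^m(s)\in L^{1/m}(\RR^N)$ while $\min(1,|\cdot|^{-N})\in L^{1/(1-m)}(\RR^N)$, the conjugate exponent, the integrands $u^m(s)E_R$ are dominated by a fixed ($R$-independent) function lying in $L^1$ and tend to $0$ pointwise; dominated convergence in $x$ and then in $s$ makes the error term vanish. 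Hence
\[
  \int_{\RR^N}\big(U(x,t_2)-U(x,t_1)\big)\,\psi(x)\,dx
  =-\int_{t_1}^{t_2}\!\!\int_{\RR^N}u^m\,\psi\,dx\,ds\le 0
\]
for a.e.\ pair $(t_1,t_2)$, because $u\ge0$ and $\psi\ge0$.

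To conclude, note that for fixed $\psi$ the map $t\mapsto\int_{\RR^N}U(x,t)\psi(x)\,dx=\int_{\RR^N}u(x,t)\Psi(x)\,dx$ is continuous on $(0,\infty)$ (as $u(\cdot,t)\to u(\cdot,t_0)$ in $L^1$ and $\Psi\in L^\infty$) and $t\mapsto\int_{\RR^N}u^m(\cdot,t)\psi$ is locally integrable, so both sides of the displayed identity are continuous in $(t_1,t_2)$ and the identity therefore holds for all $0<t_1\le t_2$; in particular $\int U(t_1)\psi\ge\int U(t_2)\psi$ for every nonnegative $\psi\in\cont^\infty_c(\RR^N)$. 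This forces $U(\cdot,t_1)\ge U(\cdot,t_2)$ a.e., hence everywhere by the continuity of $U(\cdot,t_i)$ from the first step. I expect the commutator estimate for $E_R$ in the second step to be the only genuinely technical point; everything else is soft. (Alternatively, one could invoke the interior regularity of weak solutions for $t>0$ and treat $\partial_t U=-u^m$ classically, but the truncation argument is more self-contained.)
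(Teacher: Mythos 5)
Your proof is correct in substance and reaches exactly the identity the paper derives,
\[
\int_{\RR^N}\bigl(U(t_2)-U(t_1)\bigr)\psi = -\int_{t_1}^{t_2}\!\int_{\RR^N} u^m\psi,
\]
but by a genuinely different route. The paper works in the \emph{weak} (energy) formulation: it plugs $\widetilde\phi=\mathcal{I}_\sigma*\phi$ directly into \eqref{eq:weakjumpintime} after first checking that $\widetilde\phi\in\dot H^{\sigma/2}(\RR^N)$ (which is where $N>\sigma$ enters, since one needs $\widetilde\phi\in L^{2N/(N-\sigma)}$ and $(-\Delta)^{\sigma/4}\widetilde\phi=\mathcal{I}_{\sigma/2}*\phi\in L^2$), and then uses the $L^2$--$\dot H^{\sigma/2}$ duality, via smooth approximation of $u^m$, to pass from $\int(-\Delta)^{\sigma/4}u^m\cdot(\mathcal{I}_{\sigma/2}*\phi)$ to $\int u^m\phi$. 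No cut-offs and no smoothing effect are needed, and the argument is insensitive to the range of $m$. You instead stay in the \emph{very weak} formulation, test against $\Psi\eta_R$ with a cut-off, and have to kill the commutator $E_R=(-\Delta)^{\sigma/2}(\Psi\eta_R)-\psi$ by a uniform pointwise bound plus dominated convergence; to run that scheme you invoke the smoothing effect (so $m>m_c$ is used quantitatively), and you also exploit $m<1$ through the conjugate exponent $1/(1-m)$ when dominating $u^mE_R$. This is more elementary in the sense that it avoids the fractional Sobolev machinery, but it puts all the work into the $E_R$ estimate — your claim $|E_R(x)|\le C\min(1,|x|^{-N})$ uniformly in $R$ is plausible (the splitting into $|x|\le R/2$, $|x|\sim R$, $|x|\gg R$ together with $\Psi(y)\lesssim|y|^{-(N-\sigma)}$ and the second-order Taylor/symmetrization near $y=x$ all point that way), but it is a nontrivial commutator bound that you sketch rather than prove, and it is precisely the step that the paper's choice of formulation avoids altogether. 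Both approaches then finish identically: the identity holds first for a.e.\ $t_1\le t_2$, is upgraded to all $t_1\le t_2$ by time-continuity of both sides, and the resulting a.e.\ inequality $U(t_1)\ge U(t_2)$ is promoted to an everywhere inequality using continuity (in the paper) or lower semicontinuity (your fallback) of the potentials in $x$.
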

\begin{proof}
    We start from a stronger version of expression~\eqref{eq:salto_tiempo}, available for weak solutions,
    \begin{equation} \label{eq:weakjumpintime}
        \int_{\RR^N}u(t) \phi - \int_{\RR^N}u(s) \phi = - \int_s^t \int_{\RR^N} (-\Delta)^{\sigma/4}u^m(\tau) (-\Delta)^{\sigma/4}\phi \, d\tau, 
    \end{equation}
    with $\phi \in \cont^\infty_c(\RR^N)$. We would like to take $\widetilde{\phi} = \mathcal{I}_\sigma * \phi$ as a test, where $\phi$ is a compactly supported smooth function. Thus, we want to prove that $\widetilde{\phi} \in \dot{H}^{\sigma/2}(\RR^N)$, which requires checking
    \begin{equation}
        \begin{aligned}
            \widetilde{\phi} \in L^{\frac{2N}{N-\sigma}}(\RR^N) \quad \text{and} \quad (-\Delta)^{\sigma/4}\widetilde{\phi} \in L^2(\RR^N).
        \end{aligned}
    \end{equation}
    Since $\mathcal{I}_\sigma \in L^{\frac{N}{N-\sigma}, \infty}$, it follows from standard convolution theory that
    \begin{equation}
        \widetilde{\phi} = \mathcal{I}_\sigma * \phi \in L^p(\RR^N) \cap C^\infty(\RR^N) 
    \end{equation}
    for all $p \in \left(\frac{N}{N-\sigma}, \infty \right]$. This interval contains the desired $L^{\frac{2N}{N-\sigma}}(\RR^N)$ space, leaving only the energy condition to be checked. It can be seen through the use of the Fourier transform that
    \begin{equation}
        (-\Delta)^{\sigma/4}\widetilde{\phi} = \mathcal{I}_{\sigma/2} *\phi,
    \end{equation}
    and, using the same convolution arguments this last function belongs to $L^2(\RR^N)$ if $2>\frac{N}{N-\sigma/2}$. This last condition is equivalent to $N>\sigma$, which we are assuming.

    Now, using $\widetilde{\phi}$ as a test function in~\eqref{eq:weakjumpintime} we get
    \begin{equation}
        \int_{\RR^N}u(t) \big(\mathcal{I}_{\sigma} *\phi\big) - \int_{\RR^N}u(s) \big(\mathcal{I}_{\sigma} * \phi \big) = - \int_s^t \int_{\RR^N} (-\Delta)^{\sigma/4}u^m(\tau) \, \big( \mathcal{I}_{\sigma/2}*\phi \big) \, d\tau 
    \end{equation}
    or, equivalently,
    \begin{equation}
        \int_{\RR^N}U(t) \phi - \int_{\RR^N}U(s) \phi = - \int_s^t \int_{\RR^N}u^m(\tau) \phi \, d\tau. 
    \end{equation}

    Let us justify the computation of the right-hand side term. Since $u^m \in L^2_{\text{loc}}((0,\infty)\! : \!\dot{H}^{\sigma/2})$, we can find a sequence $\zeta_n \in \cont^\infty_c(\RR^N\times (0,T))$ such that $\zeta_n \rightarrow u^m$ in $L^2((0,T)\! : \!\dot{H}^{\sigma/2})$ for $T>t\geq s>0$. Notice that this also implies convergence in $L^2((0,T)\! : \!L^{\frac{2N}{N-\sigma}})$. It is easy to see, thanks to the Fourier transform, that $(-\Delta)^{\sigma/4}\xi_n(t) \in \mathcal{S}(\RR^N)$. Then,
    \begin{equation}
        \int_{\RR^N} (-\Delta)^{\sigma/2} \zeta_n(t) \, \big(\mathcal{I}_{\sigma/2}*\phi \big) = \int_{\RR^N} |\xi|^{\sigma/2} \widehat{\zeta_n}(t) |\xi|^{-\sigma/2} \widehat{\phi}  = \int_{\RR^N} \zeta_n(t) \phi.  
    \end{equation}

    Since $\int_s^t u^m \geq 0$, choosing positive test functions $\phi$ we get that if $t\geq s>0$, $U(x,t)\leq U(x,s)$ for a.e.\ $x \in \RR^N$. But, for every $t$, the potential $U(\cdot, t)$ is continuous in $x$ thanks to convolution, so the monotonicity can be extended to every $x$ in $\RR^N$.
\end{proof}

The next result is a very simple lemma to easily check when a function $f$ is going to be C-absolutely continuous, as defined in~\Cref{se:preliminaries}. 

\begin{Lemma} \label{re:cabsolutelyaluso}
    Let $f \in L^1 \cap L^\infty(\RR^N)$. Then, $f\, dx$ is C-absolutely continuous.
\end{Lemma}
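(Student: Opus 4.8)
The plan is to recall the characterization of C-absolutely continuous measures via potentials (\Cref{re:criteriocabsolutely}): it suffices to show that $U^{f\,dx}_\sigma(x) = (\mathcal{I}_\sigma * f)(x) < \infty$ for every $x \in \RR^N$. First I would split the defining convolution integral into a near part and a far part, writing
\[
    U^{f\,dx}_\sigma(x) = C_{N,-\sigma}\int_{|x-y|<1} \frac{f(y)}{|x-y|^{N-\sigma}}\,dy + C_{N,-\sigma}\int_{|x-y|\geq 1} \frac{f(y)}{|x-y|^{N-\sigma}}\,dy.
\]
For the near part I would use $f \in L^\infty(\RR^N)$, bounding $f(y) \leq \|f\|_\infty$ and noting that $\int_{|z|<1} |z|^{-(N-\sigma)}\,dz$ is finite since $N-\sigma < N$ (here $\sigma>0$ is exactly what makes the singularity integrable). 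For the far part I would use $f \in L^1(\RR^N)$, bounding the kernel by $|x-y|^{-(N-\sigma)} \leq 1$ on $\{|x-y|\geq 1\}$, so this term is at most $C_{N,-\sigma}\|f\|_1$. Adding the two gives $U^{f\,dx}_\sigma(x) \leq C_{N,-\sigma}(C_{N,\sigma}' \|f\|_\infty + \|f\|_1) < \infty$ uniformly in $x$, and \Cref{re:criteriocabsolutely} then yields that $f\,dx$ is C-absolutely continuous.

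There is essentially no obstacle here; the only thing to be slightly careful about is that the bound must hold at \emph{every} $x$, not almost every $x$, but the estimate above is manifestly uniform in $x$ (the split into near/far regions is translation-invariant), so this is automatic. One could alternatively invoke the Hardy--Littlewood--Sobolev mapping properties or interpolation between weak-$L^{N/(N-\sigma)}$ and $L^\infty$, but the elementary near/far decomposition is the cleanest and keeps the lemma self-contained. No additional hypotheses on $f$ beyond $L^1 \cap L^\infty$ are needed.
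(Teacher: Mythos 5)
Your proof is correct and follows essentially the same route as the paper: both invoke \Cref{re:criteriocabsolutely} and split the Riesz kernel into a near piece (handled via $f\in L^\infty$ and the local integrability of $|z|^{-(N-\sigma)}$) and a far piece (handled via $f\in L^1$); you simply use the endpoint exponents $p=\infty$ and $p=1$ where the paper applies H\"older at intermediate exponents, and the paper additionally records continuity of $U^f_\sigma$, which is not needed for the criterion.
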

\begin{proof}
    Due to~\Cref{re:criteriocabsolutely}, we just need to check that $U^f_\sigma \in \cont_b(\RR^N)$. 
    
    Recall that $\mathcal{I}_\sigma = C_{N, \sigma} |x|^{-N+\sigma}$, hence $\mathcal{I}_\sigma \chi_{B_1} \in L^{q_1}(\RR^N)$ for $q_1 < N/(N-\sigma)$ and $\mathcal{I}_\sigma \chi_{B^c_1} \in L^{q_2}(\RR^N)$ for $q_2 > N/(N-\sigma)$. Then, using Hölder's inequality, we obtain
    \begin{equation}
        \|\mathcal{I}_\sigma * f\|_\infty \leq \|\mathcal{I}_\sigma \chi_{B_1} * f\|_\infty + \|\mathcal{I}_\sigma \chi_{B^c_1} * f\|_\infty \leq \|f\|_{p_1} \|\mathcal{I}_\sigma \chi_{B_1}\|_{q_1} + \|f\|_{p_2} \|\mathcal{I}_\sigma \chi_{B_1}\|_{q_2}
    \end{equation}
    for some $p_1 > N/\sigma$ and $1< p_2 < N/\sigma$, since $N>\sigma$. Continuity follows from the following standard argument: if $\tau_h$ is the translation operator of distance $h$, then
    \begin{equation}
        \|\tau_h U^f_\sigma - U^f_\sigma\|_\infty \leq \|\tau_h f - f\|_{p_1} \|\mathcal{I}_\sigma \chi_{B_1}\|_{q_1} + \|\tau_h f - f\|_{p_2} \|\mathcal{I}_\sigma \chi_{B_1}\|_{q_2} \to 0
    \end{equation}
    as $h \to 0$.
\end{proof}

The next result is fundamental, since the uniqueness proof formally relies on using the function $\psi$ defined below as a test function for our problem.
\begin{Lemma} \label{re:auxequations}
    Let $\lambda_1, \lambda_2>0$ and $\alpha \in \cont^\infty(\RR^N)$ such that $\lambda_1 \leq \alpha \leq \lambda_2$. Then, there exists $h, \psi: \RR^N \times (a,b) \to \RR$ classical solutions to the backwards problems
    \begin{equation}
        \label{heq}
        \begin{cases}
            \partial_t h - (-\Delta)^{\sigma/2}(\alpha  h) = 0, & \RR^N\times(a,b), \\
            h(x,b) = \eta(x),  \qquad &x \in \RR^N,
        \end{cases}
    \end{equation}
    \begin{equation}
        \label{psieq}
        \begin{cases}
            \partial_t \psi -\alpha (-\Delta)^{\sigma/2} \psi = 0,  & \RR^N\times(a,b), \\
            \psi(x,b) = \theta(x),  \qquad &x \in  \RR^N,
        \end{cases}
    \end{equation}
    with $\eta \in \cont^\infty_c(\RR^N)$ and $\theta = \mathcal{I}_\sigma * \eta$. They satisfy
    \begin{align}
        \label{mass}
        \int_{\RR^N} h(x,t) \, dx                  & = \int_{\RR^N} \eta(x) \, dx,                                                                              \\
        \label{coeficientecuadrado}
        \iint_{Q_a^b} \alpha |(-\Delta)^{\sigma/2}\psi|^2 & \leq \frac{1}{2}\int_{\RR^N} |(-\Delta)^{\sigma/4} \theta|^2 = \frac{1}{2}\int_{\RR^N} \eta \theta, \\
        \label{cuadrado_unif_n}
        \sup_{a \leq t \leq b}\int_{\RR^N} |(-\Delta)^{\sigma/4}\psi
        |^2                                               & \leq \int_{\RR^N} \eta \theta.
    \end{align}
    Solution $\psi$ verifies
    \begin{equation}
        \label{monotonicity}
        0 \leq \psi(t_1) \leq \psi(t_2) \leq \psi(b) = \theta, \quad \text{for any } a\leq t_1 \leq t_2 \leq b,
    \end{equation}
    so it is bounded in $L^p(\RR^N)$ for $p \in (N/(N-\sigma),\infty]$ uniformly in $t$.
\end{Lemma}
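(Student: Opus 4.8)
The plan is to reverse time, $s=b-t$, which turns both backward Cauchy problems into \emph{forward} fractional parabolic equations on $\RR^N\times(0,b-a)$ with the smooth, uniformly elliptic coefficient $\alpha$, so that solvability is a matter of standard linear theory and the real content of the lemma lies in the estimates. Write $L_1\phi=(-\Delta)^{\sigma/2}(\alpha\phi)$ and $L_2\phi=\alpha(-\Delta)^{\sigma/2}\phi$. These are formally adjoint in $L^2(\RR^N)$; moreover $-L_1$ is self-adjoint and dissipative on $L^2(\RR^N;\alpha\,dx)$ and $-L_2$ on $L^2(\RR^N;\alpha^{-1}\,dx)$ (spaces equivalent to $L^2(\RR^N)$ since $\lambda_1\leq\alpha\leq\lambda_2$), so each of $-L_1,-L_2$ generates a strongly continuous contraction semigroup; in addition $-L_2=-\alpha(-\Delta)^{\sigma/2}$ is a nonlocal operator of L\'evy type with nonnegative jump kernel $\alpha(x)\,C_{N,\sigma}\,|x-y|^{-N-\sigma}$ that annihilates constants, hence the semigroup it generates is positivity preserving and conservative, while the adjoint semigroup (generated by $-L_1$) preserves total mass. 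Since $\eta\in\cont^\infty_c(\RR^N)$ and $\theta=\mathcal{I}_\sigma*\eta\in\cont^\infty(\RR^N)\cap L^\infty(\RR^N)$ with $(-\Delta)^{\sigma/2}\theta=\eta$, so that $\theta\in\dot{H}^{\sigma/2}(\RR^N)\cap L^{2N/(N-\sigma)}(\RR^N)$ by~\eqref{eq:hlsinequality}, parabolic regularity for nonlocal equations with smooth coefficients shows that $h,\psi$ are classical solutions, smooth in $x$ and $C^1$ in $t$, with enough spatial decay to justify the manipulations below; the ``formal'' integrations by parts are made rigorous by the standard Steklov-averaging argument, as elsewhere in the paper.

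Estimate~\eqref{mass} follows from the mass-preserving property of the semigroup, or directly: integrating $\partial_t h=(-\Delta)^{\sigma/2}(\alpha h)$ over $\RR^N$ and testing against a cutoff $\phi_R\uparrow 1$ gives $\int_{\RR^N}(-\Delta)^{\sigma/2}(\alpha h)\,\phi_R=\int_{\RR^N}\alpha h\,(-\Delta)^{\sigma/2}\phi_R\to 0$ as $R\to\infty$, because $\alpha h(t)\in L^1(\RR^N)$ and $(-\Delta)^{\sigma/2}\phi_R\to 0$ boundedly, hence $\frac{d}{dt}\int_{\RR^N}h(t)=0$ and $\int_{\RR^N}h(t)=\int_{\RR^N}\eta$. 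For~\eqref{coeficientecuadrado} and~\eqref{cuadrado_unif_n}, multiply $\partial_t\psi=\alpha(-\Delta)^{\sigma/2}\psi$ by $(-\Delta)^{\sigma/2}\psi$ and integrate in $x$; using $\int_{\RR^N}\partial_t\psi\,(-\Delta)^{\sigma/2}\psi=\tfrac12\tfrac{d}{dt}\int_{\RR^N}|(-\Delta)^{\sigma/4}\psi|^2$ we obtain
\[
\tfrac12\,\frac{d}{dt}\int_{\RR^N}|(-\Delta)^{\sigma/4}\psi(t)|^2\,dx=\int_{\RR^N}\alpha\,|(-\Delta)^{\sigma/2}\psi(t)|^2\,dx\ \geq\ 0 .
\]
Thus $t\mapsto\int_{\RR^N}|(-\Delta)^{\sigma/4}\psi(t)|^2$ is nondecreasing, so its supremum over $[a,b]$ is attained at $t=b$, where it equals $\int_{\RR^N}|(-\Delta)^{\sigma/4}\theta|^2=\int_{\RR^N}(-\Delta)^{\sigma/2}\theta\cdot\theta=\int_{\RR^N}\eta\theta$; this is~\eqref{cuadrado_unif_n}. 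Integrating the displayed identity over $(a,b)$ and discarding the nonnegative term $\tfrac12\int_{\RR^N}|(-\Delta)^{\sigma/4}\psi(a)|^2$ gives~\eqref{coeficientecuadrado}.

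For~\eqref{monotonicity} we use that $\eta\geq 0$, as always holds in the application, so $\theta=\mathcal{I}_\sigma*\eta\geq 0$ and positivity of the semigroup generated by $-L_2$ gives $\psi\geq 0$. Differentiating the equation in $t$ — legitimate since $\alpha$ does not depend on $t$ and $\psi$ is smooth — shows that $v:=\partial_t\psi$ solves the same equation $\partial_t v=\alpha(-\Delta)^{\sigma/2}v$ with terminal datum $v(b)=\alpha(-\Delta)^{\sigma/2}\theta=\alpha\eta\geq 0$; by the same positivity principle $v\geq 0$, i.e.\ $\psi$ is nondecreasing in $t$, so $0\leq\psi(t_1)\leq\psi(t_2)\leq\psi(b)=\theta$ for $a\leq t_1\leq t_2\leq b$. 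Finally, $0\leq\psi(t)\leq\theta$ pointwise, together with $\theta=\mathcal{I}_\sigma*\eta\in L^p(\RR^N)$ for every $p\in(N/(N-\sigma),\infty]$ — a standard fact already used in~\Cref{re:cabsolutelyaluso} and~\Cref{re:monotoniapotencial}, since $\mathcal{I}_\sigma\in L^{N/(N-\sigma),\infty}(\RR^N)$ and $\eta\in L^1(\RR^N)\cap L^\infty(\RR^N)$ — yields $\|\psi(t)\|_p\leq\|\theta\|_p$ uniformly in $t\in[a,b]$.

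The step I expect to be the genuine obstacle is the first one: building the linear existence and regularity theory for these nonlocal, variable-coefficient parabolic equations carefully enough that $h$ and $\psi$ are honest classical solutions with the spatial decay required to legitimize every integration by parts above — in particular the vanishing of $\int_{\RR^N}(-\Delta)^{\sigma/2}(\alpha h)$ and the energy identity for $\psi$ — and justifying the positivity and comparison principles in this nonlocal setting. Once that framework is fixed, the identities and estimates~\eqref{mass}--\eqref{monotonicity} reduce to the one-line computations indicated above.
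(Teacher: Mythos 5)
Your proof is essentially correct as a proof of the lemma's \emph{stated} conclusions, and the energy computation for~\eqref{coeficientecuadrado}--\eqref{cuadrado_unif_n} matches the paper's exactly, but your existence step takes a genuinely different route, and that difference matters for how the lemma is later used.

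The paper does not solve the two backward problems independently. It solves a single auxiliary problem $\partial_t\zeta=\alpha(-\Delta)^{\sigma/2}\zeta$, $\zeta(b)=\alpha\eta$ (via the parametrix construction in \cite{ChenZhang2016_Parametrix,ChenZhang2016_ParametrixWithTime,dePabloQuiros2016}), sets $h\coloneq\zeta/\alpha$, and then \emph{defines} $\psi\coloneq\mathcal{I}_\sigma*h$, so that $h=(-\Delta)^{\sigma/2}\psi$ by construction and $\psi$ is checked to satisfy~\eqref{psieq}. You instead argue existence of $h$ and $\psi$ separately through semigroup theory for $-L_1$ and $-L_2$. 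That is a valid way to obtain two solutions, and your self-adjointness/dissipativity observations on $L^2(\alpha\,dx)$ and $L^2(\alpha^{-1}\,dx)$ are correct, but it does not automatically produce the identity $h=(-\Delta)^{\sigma/2}\psi$ (equivalently $\psi=\mathcal{I}_\sigma*h$). Although that identity is not written in the lemma's statement, the proof of \Cref{re:firstuniqueness} relies on it: there one forms the Cesàro means $H_N=\frac1N\sum h_{n_k}$, $\Psi_N=\frac1N\sum\psi_{n_k}$ and uses ``$\Psi_N(\tau_j)=\mathcal{I}_\sigma*H_N(\tau_j)$ due to linearity.'' So the way the paper proves the lemma is carrying an implicit extra output that your version drops; if you keep the independent-construction route you should at least add the remark that $\psi\coloneq\mathcal{I}_\sigma*h$ is the solution one actually wants. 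The paper's route also gives positivity and monotonicity of $\psi$ for free: $\zeta\geq0$ by positivity of the linear flow, hence $h=\zeta/\alpha\geq0$, hence $\psi=\mathcal{I}_\sigma*h\geq0$, and $\partial_t\psi=\alpha(-\Delta)^{\sigma/2}\psi=\alpha h\geq0$. Your version re-derives this by differentiating the equation in $t$ and invoking positivity a second time for $v=\partial_t\psi$ with terminal datum $\alpha\eta$; that is a fine alternative. Both you and the paper tacitly use $\eta\geq0$ for~\eqref{monotonicity}, which indeed holds in the only application.

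Two smaller points. The paper's choice of parametrix (rather than abstract semigroup theory) is not cosmetic: it delivers the heat-kernel bounds and spatial decay that legitimize the integrations by parts you correctly flag as ``the genuine obstacle'', so it is doing the work you are deferring. And your proof of~\eqref{mass} by testing against $\phi_R\uparrow1$ is fine and in fact more explicit than the paper, which simply asserts that $h=\zeta/\alpha$ conserves mass.
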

\begin{proof}
    To find a solution for the equation~\eqref{heq}, we first solve
    \begin{equation}
        \begin{cases}
            \partial_t \zeta - \alpha(-\Delta)^{\sigma/2}\zeta = 0,  & \RR^N\times(a,b), \\
            \zeta(x,b) = \alpha(x)\eta(x),  \qquad &x \in \RR^N.
        \end{cases}
    \end{equation}
    This can be done using the method described in~\cite{ChenZhang2016_Parametrix} and~\cite{ChenZhang2016_ParametrixWithTime}, or in the appendix found in~\cite{dePabloQuiros2016}, which is based on the parametrix method by E. Levi. We get the solution $h$ by simply defining it as $h \coloneq \zeta/\alpha$. This function $h$ inherits its regularity properties from $\zeta$, and additionally it verifies conservation of mass~\eqref{mass}. Let us define $\psi \coloneq \mathcal{I}_\sigma * h$, so that $h=(-\Delta)^{\sigma/2} \psi$, or
    \begin{equation}
        \mathcal{I}_\sigma * (-\Delta)^{\sigma/2} \psi = \mathcal{I}_\sigma * h = \psi. 
    \end{equation}
    This $\psi$ solves equation~\eqref{psieq}.     Multiplying equation~\eqref{psieq} by $(-\Delta)^{\sigma/2}\psi_n$ and integrating in $Q_t^b = \RR^N \times (t,b)$,
    \begin{equation}
        \iint_{Q_t^b} \alpha |(-\Delta)^{\sigma/2}\psi|^2 + \frac{1}{2}\int_{\RR^N} |(-\Delta)^{\sigma/4} \psi(t)|^2 = \frac{1}{2}\int_{\RR^N} |(-\Delta)^{\sigma/4} \theta|^2 = \frac{1}{2}\int_{\RR^N} \eta \theta.
    \end{equation}
    From this inequality, by taking the supremum for $t \in (a,b)$, we obtain both~\eqref{coeficientecuadrado} and~\eqref{cuadrado_unif_n}. Since $h,\alpha \geq 0$, and $\partial_t \psi = \alpha h$, the function $\psi$ is increasing in time, and we get
    \begin{equation}
        0 \leq \psi(t_1) \leq \psi(t_2) \leq \psi(b) = \theta, \quad \text{for any }0\leq t_1 \leq t_2 \leq T.
    \end{equation}
    Finally, observe that the function $\theta$ is the convolution of $\mathcal{I}_\sigma \in L^{N/(N-\sigma), \infty}(\RR^N)$ and $\eta \in \cont^\infty_c(\RR^N)$, so we find $\theta \in \cont^\infty(\RR^N) \cap L^p(\RR^N)$ for $p \in (N/(N-\sigma),\infty]$.
\end{proof}

The next auxiliary result will be helpful to avoid using a coefficient depending on time for the backwards problem~\eqref{psieq} that $\psi$ satisfies. 

\begin{Lemma} \label{re:approximationlemma}
    Let $R, T>0$ and $f: \RR^N \times (a,b] \to \RR$ be a measurable function such that $0 \leq f \leq 1$ almost everywhere. Then, there exists $f_n: \RR^N \times (a,b] \to \RR$ defined piecewise in time in each subinterval $\left(a+\frac{k(b-a)}{m}, a+\frac{(k+1)(b-a)}{m} \right]$ for $k=0, \dots, m-1$ such that $f(x,t)$ is smooth in $x$ for each fixed $t \in (a, b]$ and satisfies
    \begin{equation}
        \frac{1}{n} \leq f_n\leq 2, \quad \|f-f_n\|_{L^2(B_R\times(a,b))} < \frac{1+ 2 \sqrt{(b-a) |B_R|}}{n}.
    \end{equation}
\end{Lemma}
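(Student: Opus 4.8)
The plan is the standard three-step regularization. First I would mollify $f$ to gain smoothness in $x$ together with a quantitative Lipschitz bound in $t$; then I would freeze the time variable on a fine enough partition of $(a,b]$ to produce the piecewise structure; and finally I would add the constant $\tfrac1n$ to push the function off zero. The integer $m$ of the statement will be chosen depending on $n$, and the resulting $f_n$ is precisely the coefficient to be inserted into~\Cref{re:auxequations} on each time slab, with $\lambda_1=\tfrac1n$ and $\lambda_2=2$.

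For the mollification step I would extend $f$ to $\RR^N\times(2a-b,2b-a]$ by even reflection across $t=a$ and $t=b$, obtaining a measurable $f^{\ast}$ with $0\le f^{\ast}\le1$, and set $g:=f^{\ast}\ast\rho_\eta$ for a standard mollifier $\rho_\eta$ on $\RR^{N+1}$ supported in a ball of radius $\eta<\min\{1,b-a\}$. Then $0\le g\le1$ because $\rho_\eta$ is a probability density; the restriction on $\eta$ keeps the convolution away from the boundary of the reflected time interval whenever $t\in[a,b]$, so $g$ is smooth on an open set containing $\RR^N\times[a,b]$ and $g\to f$ in $L^2(B_R\times(a,b))$ as $\eta\to0$ (using that $f$, being bounded, lies in $L^2_{\mathrm{loc}}$). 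I fix $\eta$ with $\|g-f\|_{L^2(B_R\times(a,b))}<\tfrac1{4n}$; differentiating under the integral, $\partial_t g=f^{\ast}\ast\partial_t\rho_\eta$ is bounded on $\RR^N\times(a,b]$ by $L:=\|f\|_{L^\infty}\|\partial_t\rho_\eta\|_{L^1}<\infty$.

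Then I would pick the integer $m=m(n)$ so large that $L\tfrac{b-a}{m}\big((b-a)|B_R|\big)^{1/2}<\tfrac1{4n}$, partition $(a,b]$ into $I_k=\big(a+\tfrac{k(b-a)}{m},a+\tfrac{(k+1)(b-a)}{m}\big]$ for $k=0,\dots,m-1$, and set $f_n(x,t):=g\big(x,a+\tfrac{(k+1)(b-a)}{m}\big)+\tfrac1n$ for $t\in I_k$. On each $I_k$ this is independent of $t$ and smooth in $x$, and $0\le g\le1$ together with $n\ge1$ forces $\tfrac1n\le f_n\le2$. Writing $\bar g:=f_n-\tfrac1n$, the bound $|\partial_t g|\le L$ gives $|\bar g-g|\le L\tfrac{b-a}{m}$ pointwise on $\RR^N\times(a,b]$, hence $\|\bar g-g\|_{L^2(B_R\times(a,b))}<\tfrac1{4n}$, while $\|f_n-\bar g\|_{L^2(B_R\times(a,b))}=\tfrac1n\big((b-a)|B_R|\big)^{1/2}$; the triangle inequality then yields
\begin{equation*}
    \|f_n-f\|_{L^2(B_R\times(a,b))}<\tfrac1n\big((b-a)|B_R|\big)^{1/2}+\tfrac1{2n}<\frac{1+2\sqrt{(b-a)|B_R|}}{n},
\end{equation*}
since $\tfrac12<1$ and $\sqrt{(b-a)|B_R|}<2\sqrt{(b-a)|B_R|}$, which is the assertion.

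I do not expect a genuine obstacle here: the only delicate points are making the mollification behave up to the temporal endpoints $t=a,b$ — which is exactly the reason for the time reflection, so that $g$ and $\partial_t g$ are controlled on all of $(a,b]$ and not merely on a compact subinterval — and the bookkeeping that distributes the admissible error $\tfrac1n\big(1+2\sqrt{(b-a)|B_R|}\big)$ among the mollification error, the time-freezing error, and the $\tfrac1n$ shift.
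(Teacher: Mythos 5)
Your proof is correct and follows essentially the same route as the paper: smooth approximation of $f$, freezing the time variable at the right endpoints of a fine partition, and shifting by $\tfrac1n$, with the error split among the three steps. The only (immaterial) difference is that the paper takes an abstract smooth approximant and controls the time-freezing error via uniform continuity on $\overline{B_R}\times[a,b]$, whereas you build it explicitly by mollification with time reflection and use the resulting Lipschitz bound in $t$.
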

\begin{proof}
    Let $g_n \in \cont^\infty(\RR^{N+1})$ be such that $\|f-g_n\|_{L^2(B_R\times(a,b))} < \frac{1}{n}$ and $0\leq g_n \leq 1$. Now, divide the time interval $(a, b]$ in subintervals
    \begin{equation}
        \left(a+\frac{k(b-a)}{m}, a+\frac{(k+1)(b-a)}{m} \right]
    \end{equation}
    for $m \in \mathbb{N}$ and $k = 0, \dots, m-1$. Define the function $g^m_n$ as
    \begin{equation}
        g^m_n(x,t) = g_n\left(x, a+\frac{(k+1)(b-a)}{m} \right)
    \end{equation}
    for $t \in \left(a+\frac{k(b-a)}{m}, a+\frac{(k+1)(b-a)}{m} \right]$ and each $k = 0, \dots, m-1$. Since $g_n$ is uniformly continuous in $B_R \times [a,b]$, there exists $m_n$ big enough depending on $n$ so that these functions verify
    \begin{equation}
        \left|g_n (x,t) - g_n \left(x, a+\frac{(k+1)(b-a)}{m_n} \right) \right| \leq \frac{1}{n}
    \end{equation}
    for $t \in \left(a+\frac{k(b-a)}{m}, a+\frac{(k+1)(b-a)}{m} \right]$. Hence,
    \begin{equation}
        \|g_n - g^{m_n}_n\|_{L^2(B_R \times (a, b))} \leq \frac{\sqrt{(b-a) |B_R|}}{n}.
    \end{equation}
    Lastly, it is obvious that
    \begin{equation}
        \left\| \frac{1}{n} \right\|_{L^2(B_R \times (a, b))} = \frac{\sqrt{(b-a) |B_R|}}{n}.
    \end{equation}
    Therefore, defining $f_n$ as
    \begin{equation}
        f_n(x,t) \coloneq g^{m_n}_n(x,t) + \frac{1}{n},
    \end{equation}
    we get that $1/n \leq f_n(x,t) \leq 2$ and
    \begin{equation+} \tag*{ }
        \|f-f_n\|_{L^2(B_R\times(a, b))} \leq \frac{1+ 2 \sqrt{(b-a) |B_R|}}{n}. \qedhere
    \end{equation+} 
\end{proof}

\subsection{Main results}

We start this subsection with the longest (and most technical) result of this work which states that weak solutions to equation~\eqref{equation} starting from a finite Radon measure as an initial condition are unique. This is already a new result for the fractional fast diffusion equation, analogue to the one for the fractional slow diffusion equation stated in~\cite{GrilloMuratoriPunzo2015_Measure}. In fact, with only minor changes, the following result is also valid for $m \geq 1$, therefore proving uniqueness of weak solutions for any $m>m_c$, see~\Cref{re:slowdifchanges}.

Let us explain the core idea behind the proof. We subtract the expressions~\eqref{eq:veryweakwithinitialdata} for $u$ and $v$, and then rearrange the identity in order to get 
\begin{equation}
    \begin{gathered}
        \int_{\RR^N} (u(x,T)-v(x,T)) \xi(x,T) dx -  \int_{\RR^N} (u(x,\tau)-v(x,\tau)) \xi(x,\tau) dx  \\
        = \iint_{Q_T^\tau} (u-v) \partial_t \xi\, dx dt - (u^m-v^m) (-\Delta)^{\sigma/2} \xi \, dx dt \\
        = \iint_{Q_T^\tau} (u-v) \left\{\partial_t \xi\, dx dt - \left(\frac{u^m-v^m}{u-v}\right) (-\Delta)^{\sigma/2} \xi \right\} \, dx dt.
    \end{gathered}
\end{equation}
Now observe that if we chose $\xi$ as the solution $\psi$ of the backwards problem~\eqref{psieq} with $\alpha = \frac{u^m-v^m}{u-v}$ and $(a,b) = (\tau, T)$, we would obtain that
\begin{equation}
    \int_{\RR^N} (u(x,T)-v(x,T)) \theta(x) \, dx -  \int_{\RR^N} (u(x,\tau)-v(x,\tau)) \psi(x,\tau) \, dx = 0.
\end{equation}
Formally, taking the limit $\tau \to 0$ would finish the argument. To justify all the previous steps we first need to deal with the coefficient $\frac{u^m - v^m}{u-v}$. In this fast diffusion equation, this coefficient can be both degenerate (approach $0$) and singular (approach $\infty$). This makes this case more complicated than the linear case, where there is no coefficient, and the slow diffusion case, where the coefficient can be degenerate but not singular. 

Recall that weak solutions satisfy conservation of mass, so they belong to ${L^\infty((0, \infty) \!:  \!L^1(\RR^N))}$, and the smoothing effect, therefore for any $\tau>0$ they belong to ${L^\infty([\tau,\infty) \!:  \! L^\infty(\RR^N))}$. 

\begin{Theorem} \label{re:firstuniqueness}
    Let $m>m_c$, $\mu$ be a finite Radon measure, and let $u,v$ be nonnegative weak solutions to~\eqref{equation} in $(0,T)$ such that
    \begin{equation}
        \lim_{t \to 0}\int_{\RR^N} u(t) \phi \, dx = \lim_{t \to 0}\int_{\RR^N} v(t) \phi \, dx = \int_{\RR^N} \phi \, d\mu
    \end{equation}
    for any $\phi \in \cont_c(\RR^N)$. Then, $u = v$ almost everywhere in $\RR^N \times (0, T)$.
\end{Theorem}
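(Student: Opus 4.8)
The plan is to carry out the dual-equation scheme sketched just above the statement. Fix $\eta\in\cont^\infty_c(\RR^N)$ with $\eta\ge0$, fix $0<\tau<T$, set $\theta:=\pot*\eta$, and abbreviate $U_u:=\pot*u$, $U_v:=\pot*v$. The idea is to use, as test function over $Q_T^\tau$, the solution $\psi$ of the backwards problem~\eqref{psieq} on $(\tau,T)$ with terminal datum $\theta$ and coefficient $\alpha:=(u^m-v^m)/(u-v)$ (extended by $mu^{m-1}$ on $\{u=v\}$). Since $u,v$ are \emph{weak} solutions with finite measure data, \Cref{re:existenciafinita} gives conservation of mass and the smoothing effect, so $u,v$ are bounded on $[\tau,T]\times\RR^N$, and $u(\tau)\,dx,v(\tau)\,dx\rightharpoonup\mu$ with no loss of mass. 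Because $\theta$ is a Riesz potential and $N>\sigma$, both $\theta$ and each $\psi(\cdot,t)$ lie in $\dot H^{\sigma/2}(\RR^N)$ --- exactly as in the proof of \Cref{re:monotoniapotencial}, so that the weak formulation~\eqref{eq:weakjumpintime} extends to them --- so they are admissible. Subtracting the two formulations, writing $u^m-v^m=\alpha(u-v)$, the bulk term $\iint_{Q_T^\tau}(u-v)\bigl(\partial_t\psi-\alpha(-\Delta)^{\sigma/2}\psi\bigr)$ vanishes by~\eqref{psieq}, and one is left with
\begin{equation*}
    \int_{\RR^N}\bigl(U_u(\cdot,T)-U_v(\cdot,T)\bigr)\,\eta\,dx=\int_{\RR^N}\bigl(u(\cdot,T)-v(\cdot,T)\bigr)\theta\,dx=\int_{\RR^N}\bigl(u(\cdot,\tau)-v(\cdot,\tau)\bigr)\psi(\cdot,\tau)\,dx.
\end{equation*}
Sending $\tau\to0$ should kill the right-hand side; then $U_u(\cdot,T)=U_v(\cdot,T)$ a.e.\ for a.e.\ $T$, whence $u(\cdot,T)=v(\cdot,T)$ by \Cref{re:potentialuniqueness}, and continuity in $L^1$ gives $u\equiv v$.

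Making this rigorous requires two things. First, $\alpha$ is only measurable, and although the boundedness of $u,v$ on $[\tau,T]$ makes it bounded below by a positive constant $\lambda=mM^{m-1}$, it is unbounded at spatial infinity where $u,v\to0$, so \Cref{re:auxequations} does not apply to $\alpha$ itself. I would truncate, $\alpha_k:=\min(\alpha,k)\in[\lambda,k]$, approximate $\alpha_k$ on large balls by coefficients $\alpha_{k,n}$ that are smooth in $x$ and piecewise constant in $t$ via \Cref{re:approximationlemma}, solve~\eqref{heq}--\eqref{psieq} with $\alpha_{k,n}$ slab by slab and patch, and use the resulting $\psi_{k,n}$ as test function. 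The error then splits into \emph{(a)} the replacement $\alpha_{k,n}\!\to\!\alpha_k$, handled by the $L^2$-closeness from \Cref{re:approximationlemma} paired with the energy bound~\eqref{coeficientecuadrado} for $h_{k,n}:=(-\Delta)^{\sigma/2}\psi_{k,n}$; and \emph{(b)} the truncation $\alpha_k\!\to\!\alpha$, namely $\iint_{\{\alpha>k\}}(u-v)(k-\alpha)h_{k,n}$. For \emph{(b)}, the crucial point is the elementary identity $(u-v)\alpha=u^m-v^m$ together with $|u^m-v^m|\le|u-v|^m$, which bounds the integrand on $\{\alpha>k\}$ by $|u-v|^m h_{k,n}$, while~\eqref{coeficientecuadrado} forces $\iint_{\{\alpha>k\}}h_{k,n}^2\le\frac1k\iint\alpha_{k,n}h_{k,n}^2\le\frac{C}{k}\int\eta\theta\to0$; combined with the integrability of $u-v$ (after a spatial cut-off, using conservation of mass to discard the far field), \emph{(b)} vanishes upon letting $k\to\infty$ after $n\to\infty$.

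Second, one passes to the limit $\tau\to0$ in $\int(u(\cdot,\tau)-v(\cdot,\tau))\psi(\cdot,\tau)$. I would split it as $\int(u(\tau)-v(\tau))\theta+\int(u(\tau)-v(\tau))(\psi(\cdot,\tau)-\theta)$. The first summand tends to $\int\theta\,d\mu-\int\theta\,d\mu=0$, since $\theta\in\cont_b(\RR^N)$ vanishes at infinity and $u(\tau)\,dx,v(\tau)\,dx$ converge to $\mu$ tightly. For the second, the monotonicity~\eqref{monotonicity} gives $0\le\theta-\psi(\cdot,\tau)\le\theta$, so tightness again controls the part outside a large ball uniformly in $\tau$, while on a fixed ball $\psi(\cdot,\tau)$ is uniformly bounded and $u(\tau)-v(\tau)\rightharpoonup0$ vaguely; a short argument then shows this term tends to $0$ as well.

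I expect item \emph{(b)} --- controlling the error produced by truncating the \emph{singular} coefficient $\alpha$ near spatial infinity --- to be the main obstacle, and it is exactly the feature separating the fast-diffusion case from the linear case (no coefficient) and the slow-diffusion case (degenerate but locally bounded coefficient). The combination of the algebraic bound $|u^m-v^m|\le|u-v|^m$ with the quantitative $1/k$ gain from the energy identity~\eqref{coeficientecuadrado} is what should make it work; besides this, checking that $\psi$ and $\theta$ are genuinely admissible test functions for the \emph{weak} formulation (so that the bulk term cancels exactly), which is where the hypothesis $N>\sigma$ enters, is the other place requiring care.
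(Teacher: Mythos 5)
Your overall architecture matches the paper's: a dual backward problem with coefficient $\alpha=(u^m-v^m)/(u-v)$ and terminal datum $\theta=\mathcal{I}_\sigma*\eta$, approximation of the coefficient by ones that are regular and bounded away from $0$ and $\infty$, and the energy estimate~\eqref{coeficientecuadrado} to control the commutator errors. Your item \emph{(b)} --- truncating $\alpha$ at level $k$ and trading $|u^m-v^m|\le|u-v|^m$ against the $1/k$ gain from~\eqref{coeficientecuadrado} --- is a workable variant of what the paper does through the functions $A$ and $B$ of the Crowley trick and the two-sided bound $\tfrac{1}{2n}\le\alpha_n\le 2n$; these are essentially the same mechanism.

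The genuine gap is in your last step, the limit of the boundary term $\int(u(\tau)-v(\tau))\psi(\cdot,\tau)$. Two problems. First, before sending $\tau\to0$ you must pass to the limit in the approximation parameter \emph{inside} this term: $\psi_n(\cdot,\tau)$ only converges weakly in $L^p$, and its weak limit is a priori just an $L^p$ function, neither continuous nor obviously a potential. The paper needs Banach--Saks (\Cref{re:BanachSaks}, via Ces\`aro means), \Cref{re:limpotential} to identify the limit as $\mathcal{I}_\sigma*\lambda_j$ quasi-everywhere, and C-absolute continuity (\Cref{re:cabsolutelyaluso}) to upgrade quasi-everywhere equality to equality of integrals against $(u-v)(\tau_j)\,dx$. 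Your proposal does not address this at all. Second, your ``short argument'' for $\int(u(\tau)-v(\tau))(\psi(\cdot,\tau)-\theta)\to0$ does not exist: vague convergence $u(\tau)-v(\tau)\rightharpoonup0$ tested against a $\tau$-dependent, merely uniformly bounded family gives nothing unless that family is precompact in $C$ (equicontinuous), and no such equicontinuity of $\psi(\cdot,\tau)$ as $\tau\to0$ is available for a backward equation with rough, $\tau$-dependent coefficients. This is precisely why the paper abandons the two-sided limit and proves only the one-sided inequality $\liminf_j\int(u-v_s)(\tau_j)\Psi^j\ge0$, using the time-monotonicity of $\mathcal{I}_\sigma*u$ (\Cref{re:monotoniapotencial}), the monotone convergence of the $\Psi^j$ (\Cref{re:potentialmonotoneconv}), Fubini to move the pairing onto $d\lambda_j$, and the time shift $v_s$ to legitimize Fubini and the comparison $V_s(0)\le\mathcal{I}_\sigma*\mu$ --- and then reverses the roles of $u$ and $v$. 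This capacity-theoretic, one-sided argument is the technical core of the theorem and is missing from your proposal.
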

\begin{proof}
    Let us define $v_s(x,t) = v(x, t+s)$, which is also a solution. We define $\overline T$ so that $\overline{T} + s < T$, but we will simply write $T$. Let us subtract the expressions~\eqref{eq:veryweakwithinitialdata} for $u$ and $v$ so that we get
    \begin{equation} \label{eq:subtraction}
        \begin{aligned}
            \int_{\RR^N} (u(x,T)-v_s(x,T)) \xi(x,T) \, dx & -  \int_{\RR^N} (u(x,\tau)-v_s(x,\tau)) \xi(x,\tau) \,dx                 \\
                                                            & = \iint_{Q_T^\tau} (u-v_s) \partial_t \xi\, dx dt                             \\
                                                            & - \iint_{Q_T^\tau}(u^m-v_s^m) (-\Delta)^{\sigma/2} \xi \, dx dt
        \end{aligned}
    \end{equation}

    Following~\cite{CrowleyTrick}, we define $A$ and $B$ as
    \begin{equation}A=\left\{\begin{aligned}
             & \frac{u-v_s}{u-v_s+u^m-v_s^m} &  & \text{ when } u\neq v_s, \\
             & \quad\quad\quad\quad 0 \quad          &  & \text{ when } u=v_s,
        \end{aligned}\right. 	\quad  B=\left\{\begin{aligned}
             & \frac{u^m-v_s^m}{u-v_s+u^m-v_s^m} &  & \text{ when } u\neq v_s, \\
             & \quad\quad\quad\quad 0 \quad                            &  & \text{ when } u=v_s.
        \end{aligned}\right. 	 \end{equation}

    Both $A$ and $B$ satisfy $0\leq A,B\leq 1$. Observe that $\alpha = B/A$. We can assume $\alpha$ to be well-defined because if $A=0$, then $u=v_s$ and $u^m=v_s^m$, which already gives us what we want to get. So we ignore the set in which $A = 0$. Instead of approximating $\alpha$ directly, we approximate $A$ and $B$ like in~\cite{PerthameQuirosVazquez_HeleShaw}. Let us fix $R>0$. Then, we build $A_n$ and $B_n$ as in~\Cref{re:approximationlemma}, with the added restriction that they both must be piecewise constant in time in the same intervals. Therefore, we have
    \begin{equation}
        \left\{\begin{aligned}
             & \|A-A_n\|_{L^2(B_R\times(0,T))} < \beta/n, \quad  &  & \frac{1}{n}<A_n\leq 2, \\
             & \|B-B_n\|_{L^2(B_R\times(0,T))} < \beta/n,  \quad &  & \frac{1}{n}<B_n\leq 2, \\
        \end{aligned}\right.
    \end{equation}
    with $\beta$ depending on $R$ and $T$. Observe that $\alpha_n=B_n/A_n$ is also piecewise constant in time in the same time intervals as $A_n$ and $B_n$. It follows from these definitions that $\frac{1}{2n} \leq \alpha_n \leq 2n$.

    Let us build $h_n$ and $\psi_n$ as in~\Cref{re:auxequations} with coefficient $\alpha = \alpha_n$. We construct $h_n$ and $\psi_n$ for each subinterval in which $\alpha_n$ is constant in time, and then glue them together by choosing as the initial condition of $h_n$ in each subinterval the final time of the previous subinterval. All the estimates of $h_n$ and $\psi_n$ found in~\Cref{re:auxequations} are preserved.

    Consider $\xi = \phi_R \psi_n$ our test function, with $\phi_R$ a smooth function having compact support in $B_R$, $\phi_R \equiv 1$ if $|x|\leq R/2$ and $\|\phi_R\|_\infty\leq 1$. In fact, we consider it our test function for each time interval in which $\alpha_n$ is constant in time, but after joining all the intervals from~\eqref{eq:subtraction} we get
    \begin{equation} \label{limitenR}
        \begin{aligned}
            \int_{\RR^N} (u(x,T)-v_s(x,T)) \phi_R(x) \theta(x) \,dx & -  \int_{\RR^N} (u(x,\tau)-v_s(x,\tau)) \phi_R(x) \psi_n(x,\tau) \,dx \\
                                                                       & =  I_n^1(R) + I_n^2(R) - I_n^3(R) + I_n^4(R),
        \end{aligned}
    \end{equation}
    with
    \begin{align}
         & I_n^1(R) = \iint_{Q_\tau^T}(u-v_s+u^m-v_s^m)\frac{B_n}{A_n}(A-A_n)\phi_R(-\Delta)^{\sigma/2}\psi_n, \\
         & I_n^2(R) = \iint_{Q_\tau^T}(u-v_s+u^m-v_s^m)(B_n-B)\phi_R(-\Delta)^{\sigma/2}\psi_n,                \\
         & I_n^3(R) = \iint_{Q_T^\tau}  (u^m-v_s^m)  \psi_n (-\Delta)^{\sigma/2} \phi_R,                     \\
         & I_n^4(R) = \iint_{Q_T^\tau}  (u^m-v_s^m) E(\psi_n, \phi_R ),
    \end{align}
    where
    \begin{equation}
        E(f, g)(x) = \frac{1}{2}\int_{\RR^N} \frac{(f(x)-f(y))(g(x)-g(y))}{|x-y|^{N+\sigma}} \,dy.
    \end{equation}

    Observe that $I^1_n$ and $I^2_n$ appear due to the approximation of the coefficient $\alpha_n$, and $I^3_n$ and $I^4_n$ due to the cut-off function $\phi_R$. With the use of~\eqref{coeficientecuadrado} and~\eqref{cuadrado_unif_n}, the cut-off function $\phi_R$, and the boundedness of $u, v_s,u^m, v_s^m$ for a $\tau>0$ we can perform the following computations
    \begin{equation}
        \begin{aligned} | I_n^1 |
             & \leq C \iint_{B_R\times(\tau,T)} \frac{B_n}{A_n} |A-A_n| |(-\Delta)^{\sigma/2} \psi_n|                                                                                                                      \\
             & \leq C  \left\| \left(\frac{B_n}{A_n}\right)^{1/2}((-\Delta)^{\sigma/2} \psi_n) \right\|_{L^2(B_R\times(\tau,T))} \left\| \left( \frac{B_n}{A_n} \right)^{1/2} (A-A_n)    \right\|_{L^2(B_R\times(\tau,T))} \\
             & \leq  C n^{1/2} \left\|A-A_n    \right\|_{L^2(B_R\times(\tau,T))}  \leq C\alpha/n^{1/2},
        \end{aligned}
    \end{equation}
    \begin{equation}
        \begin{aligned} | I_n^2 |
             & \leq C \iint_{B_R\times(\tau,T)} |B-B_n| |(-\Delta)^{\sigma/2} \psi_n|                                                                                                                                      \\
             & \leq C  \left\| \left(\frac{B_n}{A_n}\right)^{1/2}((-\Delta)^{\sigma/2} \psi_n) \right\|_{L^2(B_R\times(\tau,T))} \left\| \left( \frac{A_n}{B_n} \right)^{1/2} (B-B_n)    \right\|_{L^2(B_R\times(\tau,T))} \\
             & \leq  C n^{1/2} \left\|B-B_n    \right\|_{L^2(B_R\times(\tau,T))}  \leq C\beta/n^{1/2}.
        \end{aligned}
    \end{equation}

    The constant $C$ may degenerate when $\tau \to 0$ and when $R \to \infty$. To deal with $I_n^3(R)$, we use similar arguments to those employed in the standard conservation of mass proofs
    \begin{equation}
        \begin{aligned}
            |I_n^3(R)|=\left|\iint_{Q_T^\tau}  (u^m-v_s^m) ( \psi_n \mathcal{L} \phi_R) \right| & \leq \|\theta\|_\infty \iint_{Q_T^\tau}  (|u^m| + |v_s^m|) |\mathcal{L} \phi_R | \\
                                                                                                            & \leq C \|\theta\|_\infty(T-\tau) \||u^m| + |v_s^m|\|_{1/m} R^{N(1-m)-\sigma}.
        \end{aligned}
    \end{equation}
    Here we use $m>(N-\sigma)/N = m_c$ so that this term goes to zero. To estimate $I_n^4(R)$ we need to use Hölder's inequality twice. Let us write $\Phi(x,t) = u^m(x,t)-v_s^m(x,t)$. We have
    \begin{equation}
        \begin{aligned}
            |I_n^4(R)| & \leq \left| \iint_{Q_T^\tau}  \Phi(x,t) E(\psi_n(x,t), \phi_R(x) ) \, dxdt \right|                                                                                              \\
                       & \leq \int\limits_\tau^T \int\limits_{\RR^N} |\Phi(x,t)| \left( \int\limits_{\RR^N} \frac{|\psi_n(x,t)-\psi_n(y,t)|\, |\phi_R(x)-\phi_R(y)|}{|x-y|^{N+\sigma}} \,dy \right) dx dt.
        \end{aligned}
    \end{equation}
    We use Hölder's inequality in the interior integral, so that we get
    \begin{equation}
        \begin{aligned}
             & \int\limits_{\RR^N} \frac{|\psi_n(x,t)-\psi_n(y,t)|\, |\phi_R(x)-\phi_R(y)|}{|x-y|^{N+\sigma}} \,dy = \int\limits_{\RR^N} \left|\frac{\psi_n(x,t)-\psi_n(y,t)}{|x-y|^{(N+\sigma)/2}} \right| \left|\frac{\phi_R(x)-\phi_R(y)}{|x-y|^{(N+\sigma)/2}}\right| \,dy \\
             & \leq \left( \int\limits_{\RR^N} \frac{(\psi_n(x,t)-\psi_n(y,t))^2}{|x-y|^{N+\sigma}}\, dy \right)^{1/2} \left(\int\limits_{\RR^N} \frac{(\phi_R(x)-\phi_R(y))^2}{|x-y|^{N+\sigma}}\, dy  \right)^{1/2}.
        \end{aligned}
    \end{equation}
    Define
    \begin{equation}
        \overline{E}(f)(x) =\int\limits_{\RR^N} \frac{(f(x)-f(y))^2}{|x-y|^{N+\sigma}}\, dy  .
    \end{equation}
    We have that $\overline{E}(\phi_R)(x) = R^{-\sigma} \overline{E}(\phi_1)(x/R)$. If we tried to exactly replicate the inequalities done for $I_n^3(R)$, we would encounter a problem here. This is apparent after taking the $L^q(\RR^N)$ norm,
    \begin{equation}
        \|(\overline{E}(\phi_R)(x))^{1/2}\|_q = \|\overline{E}(\phi_R)(x)\|_{q/2}^{1/2}  \leq C R^{-\sigma/2+N/q}.
    \end{equation}
    This estimate is worse than the one of $I_n^3(R)$ in the sense that we are going to require $q$ to be larger. If we want the power of $R$ to be negative, a possible choice would be $q=\frac{2}{1-m}$. Let us use Hölder's inequality with three terms, which requires
    \begin{equation}
        \frac{1}{q_1} + \frac{1}{q_2} + \frac{1}{q_3} = 1.  
    \end{equation}
    With this,
    \begin{equation}
        |I_n^4(R)| \leq \int\limits_\tau^T \|\Phi(\cdot,t)\|_{q_1} \|(\overline{E}(\psi_n)(x,t))^{1/2}\|_{q_2} \|(\overline{E}(\phi_R)(x))^{1/2}\|_{q_3}.
    \end{equation}
    Choose $q_1 = \frac{2}{m}$, $q_2 = 2$ and $q_3 =\frac{2}{1-m}$. Thanks to $q_2 = 2$, we can make use of~\eqref{cuadrado_unif_n} and thus get a bound uniform in $n$. Moreover, $q_1 > \frac{1}{m}$ and $\Phi \in L^{1/m}(\RR^N) \cap L^\infty(\RR^N)$ uniformly in time. Hence, we conclude that
    \begin{equation}
        |I_n^4(R)| \leq C R^{\frac{-\sigma + N(1-m)}{2}} \underset{R\rightarrow \infty}{\longrightarrow} 0,\quad \text{uniformly in $n$},
    \end{equation}
    since the constant $C$ does not depend on $n$ thanks to~\eqref{cuadrado_unif_n} and, again, using that $m>m_c$.

    Thanks to these bounds, we can control the right-hand side of~\eqref{limitenR}: integrals $I_n^1(R)$, $I_n^2(R)$ go to zero when $n \to \infty$, and $I_n^3(R)$, $I_n^4(R)$ are bounded uniformly in $n$ and go to zero when we take the limit $R \to \infty$. But we have not controlled yet how the left-hand side of~\eqref{limitenR} behaves. The integral
    \begin{equation}
        \int_{\RR^N} (u(x,T)-v_s(x,T)) \phi_R(x) \theta(x) \,dx 
    \end{equation}
    is not going to be a problem, but we need to deal with
    \begin{equation}
        \int_{\RR^N} (u(x,\tau)-v_s(x,\tau)) \phi_R(x) \psi_n(x,\tau) \, dx 
    \end{equation}
    when $n$ goes to infinity. We are going to extract an adequate converging subsequence of $\psi_n$.

    First, let us choose a sequence $\tau_j$ converging to zero as $j\rightarrow \infty$. The potentials $\psi_n(\tau_j)$, are uniformly bounded in $L^p(\RR^N)$ for a fixed $p\in \left(N/(N-\sigma), \infty \right]$ since $0\leq \psi_n(\tau_j) \leq \theta$. This allows us to choose a subsequence in $n$ such that $\psi_n(\tau_j)$ converges weakly to $\Psi^j$. For each $j$, we may find a different subsequence, but we can get rid of this problem after extracting another subsequence valid for all $j$ using a diagonal argument. For any limit that might depend on $j$ we perform the previous argument to avoid it. We would like to identify this limit $\Psi^j$ as another potential. Thanks to the Banach-Saks Theorem,~\Cref{re:BanachSaks}, there exists a subsequence $\{\psi_{n_k}(\tau_j)\}_k$ of the previous one such that
    \begin{equation} \label{eq:cesaromean}
        \Psi_N(\tau_j) = \frac{1}{N} \sum_{k=1}^N \psi_{n_k}(\tau_j)
    \end{equation}
    converges $\lim_{N \to \infty} \Psi_N(\tau_j) = \Psi^j$ strongly in $L^p$. Extracting a subsequence in $N$ of $\Psi_N(\tau_j)$ we conclude that $\Psi_N(\tau_j)$ converges a.e.\ to $\Psi^j$. Now define
    \begin{equation}
        H_N(\tau_j) = \frac{1}{N} \sum_{k=1}^N h_{n_k}(\tau_j). 
    \end{equation}
    It is easy to see that $\Psi_N(\tau_j) = \mathcal{I}_\sigma*H_N(\tau_j)$ due to linearity. The functions $H_N(\tau_j)$ conserve the same mass as $h_n(\tau_j)$, 
    \begin{equation}
        \int_{\RR^N}H_n(\tau_j) = \frac{1}{N} \sum_{k=1}^N \int_{\RR^N} h_{n_k}(\tau_j)= \frac{N}{N}\int_{\RR^N} \eta = \int_{\RR^N} \eta.
    \end{equation}
    Extracting a subsequence in $N$ (again, independent of $j$) we get that $H_N(\tau_j)$ converges to $\lambda_j$ in the vague topology. Using~\Cref{re:limpotential},
    \begin{equation}
        \liminf_{N \to \infty} \Psi_N(x,\tau_j) = (\mathcal{I}_\sigma*\lambda_j) (x) \quad \text{ quasi-everywhere}. 
    \end{equation}
    Since they are equal quasi-everywhere, we can use that $(u-v_s)(\tau_j)\phi_R  \in L^1\cap L^\infty(\RR^N)$ and~\Cref{re:cabsolutelyaluso} to prove that
    \begin{equation}
        \int_{\RR^N} (u(x,\tau_j)-v_s(x,\tau_j)) \phi_R(x) \Psi^j (x)\, dx = \int_{\RR^N} (u(x,\tau_j)-v_s(x,\tau_j)) \phi_R(x) \mathcal{I}_\sigma*\lambda_j\, dx.
    \end{equation}

    With this we can finish taking the limits in $n$ and $R$ in~\eqref{limitenR} to conclude the argument. We have arrived to the following expression
    \begin{equation}
        \int_{\RR^N} (u(x,T)-v_s(x,T)) \theta(x) \, dx =  \int_{\RR^N} (u(x,\tau_j)-v_s(x,\tau_j)) \Psi^j(x) \, dx.
    \end{equation}
    As we mentioned, $\{\Psi^j\}_j$ have the monotonicity in time inherited from the one for~$\{\psi_j\}_j$, explained in~\Cref{re:auxequations},
    \begin{equation}
        \label{monotonia2}
        0\leq \Psi^{j+1}(x) \leq \Psi^j(x) \leq \theta(x) \text{ for a.e } x \in \RR^N.
    \end{equation}
    Since $\Psi^j = \mathcal{I}_\sigma*\lambda_j$ quasi-everywhere, which is dense, and we have that the potentials satisfy 
    \begin{equation}
        (\mathcal{I}_\sigma*\lambda_j) (x) = \liminf_{y \to x} (\mathcal{I}_\sigma*\lambda_j) (y),
    \end{equation}
    see~\Cref{re:mark}, then
    \begin{equation}
        \label{monotonia3}
        0\leq (\mathcal{I}_\sigma*\lambda_{j+1}) (x) \leq (\mathcal{I}_\sigma*\lambda_j) (x) \leq \theta(x) \text{ for all } x \in \RR^N.
    \end{equation}
    The following conservation of mass condition is also available
    \begin{equation}
        \label{consmasa}
        \int_{\RR^N} d\lambda_j = \int_{\RR^N} \eta(x) \, dx.
    \end{equation}
    From~\eqref{monotonia3}, using~\Cref{re:potentialmonotoneconv}, we deduce that there exists a positive measure $\lambda_\infty$ such that
    \begin{gather}
        \lim_{j \to \infty} \lambda_j = \lambda_\infty \quad \text{ in the vague topology},\\
        \Psi^\infty(x) \coloneq \lim_{j \to \infty} (\mathcal{I}_\sigma*\lambda_j) (x)  = (\mathcal{I}_\sigma * \lambda_\infty)(x),\quad \text{ quasi-everywhere}.
    \end{gather}
    Through~\Cref{re:monotoniapotencial}, we have monotonicity in time of the potentials $U(x,t) = (\mathcal{I}_\sigma*u)(x,t)$. With Fubini's Theorem
    \begin{equation}
        \begin{aligned}
            \int_{\RR^N} U(x,t) d\lambda_j(x) & = \int_{\RR^N} \left( \int_{\RR^N} \frac{u(y,t)}{|x-y|^{N-\sigma}}\, dy  \right) d\lambda_j(x) \\
                                                     & =\int_{\RR^N} \left( \int_{\RR^N} \frac{u(y,t)}{|x-y|^{N-\sigma}}d\lambda_j(x)  \right) dy     \\
                                                     & = \int_{\RR^N} u(x,t) \Psi^j(x) \, dx.
        \end{aligned}
    \end{equation}

    Now, using the monotonicity of $U(x,t)$, we get for $k \leq j$ that
    \begin{equation}
        \int_{\RR^N} u(x,\tau_j) \Psi^j(x) \, dx =\int_{\RR^N} U(x,\tau_j)\,  d\lambda_j(x) \geq   \int_{\RR^N} U(x,\tau_k)\,  d\lambda_j(x)=\int_{\RR^N} u(x,\tau_k) \Psi^j(x) \,dx.
    \end{equation}
    Taking $\liminf_{j \to \infty}$ we obtain 
    \begin{equation}
        \liminf_{j \to \infty}\int_{\RR^N} U(x,\tau_j) \, d\lambda_j(x) \geq   \int_{\RR^N} U(x,\tau_k) \, d\lambda_\infty(x).
    \end{equation}
    Since this is true for every $k$ and $U$ converges monotonically to $\mathcal{I}_\sigma*\mu$,
    \begin{equation}\label{eq:firstinequality}
        \liminf_{j \to \infty}\int_{\RR^N} u(x,\tau_j) \Psi^j(x) \, dx \geq \int_{\RR^N} (\mathcal{I}_\sigma*\mu)(x) \, d\lambda_\infty(x) .
    \end{equation}
    It is important to point out that in this last step we require monotone convergence for every point, not just an almost everywhere convergence. This is because the measure $\lambda_\infty$ and the Lebesgue measure a priori may not have the same sets of measure zero.

    Recall now that $v_s(x,t) = v(x,t+s)$. The potential $V$ of $v$ satisfies the same condition as the potential of $u$, that is, it decreases as time increases, so the last inequality is verified thanks to
    \begin{equation}
        V_s(0) = V(s) \leq V(0) = \mathcal{I}_\sigma*\mu.
    \end{equation}
    Thus,
    \begin{equation}\label{eq:secondinequality}
        \begin{aligned}
            \limsup_j \int_{\RR^N} v_s(x,\tau_j) \Psi_j \, dx = \limsup_j \int_{\RR^N} V_s(x,\tau_j) \, d\lambda_j  \leq\limsup_j \int_{\RR^N} V_s(x,0) \, d\lambda_j & \\
                                                                                                                                  =  \limsup_j \int_{\RR^N} v_s(x,0) \Psi_j \, dx  = \int_{\RR^N} v_s(x,0) \, \Psi_\infty \,dx = \int_{\RR^N} V_s(x,0) \, d\lambda_\infty.&
        \end{aligned}
    \end{equation}

    The main reason why the translation in $s$ was introduced is to use Fubini's Theorem in the previous computation. This may not be possible for $s=0$. The monotonicity of $\{\Psi^j\}_j$ is also used here.

    We now join the last obtained inequalities~\eqref{eq:firstinequality} and~\eqref{eq:secondinequality} to get
    \begin{equation}
        \liminf_j \int_{\RR^N} (u-v_s)(\tau_j) \Psi_j \, dx \geq \int_{\RR^N} [(\mathcal{I}_\sigma*\mu)(x) - V_s(x,0)] \, d\lambda_\infty (x) \geq 0. 
    \end{equation}
    Therefore, we obtained from~\eqref{limitenR} that
    \begin{equation}
        \int_{\RR^N} (u-v_s)(T) \theta \geq 0.
    \end{equation}
    Since $v_s$ is a translation in time of $v$, we may take the limit $s \to 0$ since translations are continuous operators in $L^1$ and both $u,v$ are in $L^1(\RR^N \times (0,T))$.
    Finally, we got that
    \begin{equation}
        \int_{\RR^N} (U-V)(T) \eta \geq 0,
    \end{equation}
    so $U\geq V$ almost everywhere. Reversing the roles of $u$ and $v$, since $\eta$ was arbitrary, we obtain $U (T) = V (T)$ almost everywhere. Then, using~\Cref{re:potentialuniqueness} we conclude that $u (T) = v(T)$ almost everywhere.
\end{proof}

\begin{Remark} \label{re:slowdifchanges}
    The previous result works without almost any change for $m>1$. We just need to perform slightly different computations for the estimates found in the integrals $I_n^3(R)$ and $I_n^4(R)$.     
\end{Remark}

Let us prove now what is the main result of this paper. In it,~\Cref{re:firstuniqueness},~\Cref{re:measurecomparison} and~\Cref{re:orderedcomparison} are fundamental, since the proof relies on using weak solutions as an approximation and comparing them to weighted solutions.

\begin{Theorem} \label{re:mainuniqueness}
    Let $m_c < m <1$ and $\mu \in \mathfrak{M}_\sigma$. Then, there exists a unique weighted solution $u$ of~\eqref{equation} that takes $\mu$ as its initial condition.
\end{Theorem}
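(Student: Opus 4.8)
The plan is to trap an arbitrary weighted solution with initial trace $\mu$ between $0$ and itself by a \emph{fixed} monotone sequence of weak solutions with finite measure data, and then squeeze. Existence of such a solution is~\Cref{re:existencia}; so it suffices to show that any two nonnegative weighted solutions $u,v$ with the same trace $\mu\in\mathfrak{M}_\sigma$ coincide a.e.

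\textbf{Step 1 (the approximating sequence).} First I would choose $h_n\in\cont^\infty_c(\RR^N)$ with $0\le h_n\le h_{n+1}\le 1$ and $h_n\uparrow 1$. Then each $h_n\mu$ is a finite nonnegative Radon measure, so by~\Cref{re:existenciafinita} there is a weak solution with initial data $h_n\mu$, and by~\Cref{re:firstuniqueness} it is \emph{unique}; call it $w_n$. This uniqueness is the crucial point, since it makes $w_n$ depend only on $\mu$ and $h_n$, not on $u$ or $v$. Applying~\Cref{re:measurecomparison} to $u$ iteratively along the pairs $h_n\le h_{n+1}$ (starting from $w_0=0$) produces weak solutions with data $h_n\mu$ lying below $u$ and increasing in $n$; by uniqueness these are exactly the $w_n$, so $0\le w_1\le w_2\le\cdots\le u$. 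Running the same argument with $v$ gives $w_n\le v$. Hence $w_n\uparrow\bar u$ pointwise a.e.\ for some $\bar u$ with $\bar u\le u$ and $\bar u\le v$, and $\bar u$ is the same object for both solutions. Note also that each $w_n$ is itself a weighted solution, since $0\le w_n\le u\in L^1_\vartheta$.

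\textbf{Step 2 (the squeeze).} Since $u\ge w_n\ge 0$ are ordered weighted solutions,~\Cref{re:orderedcomparison} gives, for $0<\tau\le t$ and $R\ge 1$,
\[
\Big(\int_{\RR^N}(u-w_n)(t)\,\vartheta_R\Big)^{1-m}\le\Big(\int_{\RR^N}(u-w_n)(\tau)\,\vartheta_R\Big)^{1-m}+\frac{C(t-\tau)}{R^{\sigma-N(1-m)}}.
\]
Letting $\tau\to 0^+$ and using~\Cref{re:initialtraces} (the trace is attained against $\vartheta_R$, both for $u$ and for $w_n$, whose trace is $h_n\mu$), the first term on the right becomes $\big(\int_{\RR^N}(1-h_n)\vartheta_R\,d\mu\big)^{1-m}$. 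Now I would let $n\to\infty$: that term tends to $0$ by dominated convergence (as $\int_{\RR^N}\vartheta_R\,d\mu<\infty$ for $\mu\in\mathfrak M_\sigma$), while the left-hand side tends to $\big(\int_{\RR^N}(u-\bar u)(t)\vartheta_R\big)^{1-m}$ because the integrands decrease to a nonnegative limit dominated by $u(t)\vartheta_R\in L^1$ (finiteness here comes from~\Cref{re:orderedcomparison} with $v=0$). Thus $\big(\int_{\RR^N}(u-\bar u)(t)\vartheta_R\big)^{1-m}\le C t\,R^{-(\sigma-N(1-m))}$. Since $m>m_c$, the exponent $\sigma-N(1-m)$ is positive, and since $\vartheta_R\ge\vartheta$ for $R\ge 1$, letting $R\to\infty$ forces $\int_{\RR^N}(u-\bar u)(t)\vartheta=0$, i.e.\ $u(t)=\bar u(t)$ a.e.\ for every $t>0$. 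Repeating with $v$ in place of $u$ yields $v(t)=\bar u(t)$ a.e., so $u=v$ a.e.\ in $Q$ by Fubini.

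\textbf{Main obstacle.} The one genuinely delicate ingredient is the legitimate use of~\Cref{re:firstuniqueness} to pin the sequence $\{w_n\}$ down independently of $u$ and $v$; the rest is bookkeeping with the weighted $L^1$ estimate. The remaining care is routine: checking that each $w_n$ is a weighted solution so that~\Cref{re:orderedcomparison} applies to the pair $u\ge w_n$, and justifying the interchange of the limits $\tau\to 0^+$, $n\to\infty$, $R\to\infty$ together with the finiteness of $\int_{\RR^N}\vartheta_R\,d\mu$ and $\int_{\RR^N}u(t)\vartheta_R$ at each stage, all of which are guaranteed by $\mu\in\mathfrak M_\sigma$ and by~\Cref{re:orderedcomparison}.
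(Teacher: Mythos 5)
Your proposal is correct and follows essentially the same route as the paper: the same monotone sequence $\{w_n\}$ of weak solutions with data $h_n\mu$, pinned down independently of $u$ and $v$ by~\Cref{re:firstuniqueness} and~\Cref{re:measurecomparison}, then squeezed against $u$ via~\Cref{re:orderedcomparison}. The only (harmless) difference is the order of limits in the squeeze: you send $\tau\to 0^+$ for each fixed $n$ using the known traces $\mu$ and $h_n\mu$ and only then let $n\to\infty$, whereas the paper first forms $w_\infty=\lim_n w_n$ and separately verifies that its trace is $\mu$.
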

\begin{proof}
    Let us define $u$ as the weighted solution of~\eqref{equation} from initial condition $\mu$, which exists due to~\Cref{re:existencia}. Choose a sequence of smooth compactly supported functions $\{h_n\}_n$ such that~$h_n \leq h_{n+1}$ and $\lim_{n\to \infty} h_n = 1$. Let $w_1$ be the solution with initial condition $h_1 \mu$, which exists due to~\Cref{re:existenciafinita}. Now, define $w_{2}$ as the weak solution with initial data $h_{2}\mu$ such that~$w_1 \leq w_2 \leq u$, which exists thanks to~\Cref{re:measurecomparison}. Iterating this process, we have a monotone sequence $\{w_n\}_n$ of weak solutions satisfying $w_1 \leq w_2 \leq \cdots \leq u$. Bear in mind that, thanks to~\Cref{re:firstuniqueness}, we now know that these weak solutions $w_n$ are unique.   
    
    Define $w_\infty \coloneq \lim_n w_n$. We can easily check that $w_\infty$ is a weighted solution using the fact that~$w_\infty \in L_\text{loc}^\infty((0,\infty) \! : \! L^1_\vartheta)$, since $w_\infty \leq u$. Due to them being ordered solutions,~\Cref{re:orderedcomparison} is available and we get
    \begin{equation}
        \left( \int_{\RR^N} (u-w_\infty)(x,t) \vartheta_R(x) \, dx\right)^{1-m} \leq \left( \int_{\RR^N} (u-w_\infty)(x,\tau ) \vartheta_R(x) \, dx\right)^{1-m} + \frac{C |t-\tau| }{R^{\sigma - N(1-m)}}.
    \end{equation}
    From this, observe that if
    \begin{equation} \label{eq:finishingstep}
        \lim_{\tau \to 0 }\int_{\RR^N} w_\infty(x,\tau ) \vartheta_R(x) \, dx = \int_{\RR^N} \vartheta_R \, d\mu,
    \end{equation}
    then $u \leq w_\infty$ a.e.\ and thus $u = w_\infty$ almost everywhere. Similarly to previous arguments, using~\eqref{eq:salto_tiempo} with $\vartheta_R$ instead of $\phi \in \cont^\infty_c(\RR^N)$,
    \begin{equation}
        \left| \int_{\RR^N} w_n(t) \vartheta_R dx - \int_{\RR^N} h_n \vartheta_R d\mu \right| \leq \int_0^{t} \int_{\RR^N} w_n^m \vartheta_R^m \frac{|(-\Delta)^{\sigma/2}\vartheta|}{\vartheta_R^m},
    \end{equation}
    and, just as we did in previous instances, taking first the limit in $n\to \infty$ and then $t \to 0$ we obtain~\eqref{eq:finishingstep}. We have obtained that $u$ coincides with the monotone limit $w_\infty$ of the sequence~$\{w_n\}_n$. This sequence only depends on the initial data $\mu$, and its elements are unique thanks to~\Cref{re:firstuniqueness}. Therefore, any other weighted solution starting from initial condition $\mu$ would also coincide with~$w_\infty$.
\end{proof}

\section{Comments and possible extensions} \label{se:extensions}

\textsc{The case $N\leq \sigma$:} We have left open the case $N=1$, $\sigma \in [1,2)$. In order to prove uniqueness, we would require a different approach to~\Cref{re:firstuniqueness}, without using the Riesz potential.

\textsc{More general initial conditions:} The biggest question left open from this work is whether the theory developed here applies to very weak solutions that are not weighted solutions. Let us restrict ourselves to using measurable functions as initial data. Intuitively, the possible set of initial conditions for very weak solutions of~\eqref{equation} should be measurable functions $u_0$ verifying 
\begin{equation} \label{eq:intuitiveinitialcondition}
    \int_{\RR^N} \frac{u_0^m(x)}{(1+|x|^2)^{\frac{N+\sigma}{2}}}\, dx<\infty
\end{equation}
for $0<m<1$. We know that, as long as the initial condition $u_0$ satisfies 
\begin{equation} \label{eq:actualinitialcondition}
    \int_{\RR^N} u_0 \vartheta \, dx < \infty
\end{equation}
for $\vartheta \in \varTheta$, the very weak solution starting from this initial data is a weighted solution. But there could be functions $u_0$ satisfying~\eqref{eq:intuitiveinitialcondition} and not~\eqref{eq:actualinitialcondition}. The main problem is that we require these weights to be regular enough so that $(-\Delta)^\sigma \vartheta(x)$ makes pointwise sense while also satisfying
\begin{equation} \label{eq:quotientcondition}
    \int_{\RR^N} \left(\frac{|(-\Delta)^{\sigma/2}\vartheta(x)|}{\vartheta(x)^m} \right)^{\frac{1}{1-m}} < \infty.
\end{equation}
This is a technical restriction that only allows us to state that, for the moment, weighted solutions are subclass of very weak solutions and might not be the whole set. Understanding the set of functions that satisfy~\eqref{eq:quotientcondition} seems like a delicate problem that would allow us to answer this question.   

\textsc{The operator:} The operator used in this work is $(-\Delta)^\sigma$, the fractional Laplacian. We could easily adapt all the previous results, except those concerning uniqueness, to a more general divergence-form operator defined as
\begin{equation} \label{operator_def}
    \mathcal{L}f(x) = P.V. \int_{\mathbb{R}^N} (f(x)-f(y)) J(x,y)\, dy,
\end{equation}
where $P.V.$ is the Cauchy principal value, and $J$ is a measurable kernel such that
\begin{equation}\label{nucleo}
\tag{H$_J$}
\left\{\begin{aligned}
    & J(x,y) = J(y,x), \quad x,y \in \mathbb{R}^N, \, x \neq y,       \\
    & \frac{ 1 }{\Lambda |x-y|^{N+\sigma}} \leq J(x,y) \leq \frac{\Lambda}{|x-y|^{N+\sigma}}, \quad x,y \in \mathbb{R}^N, \, x \neq y, 
                                                                        \\
\end{aligned} \right.
\end{equation}
for some constants $\sigma\in (0,2)$ and $\Lambda >0$. The problem is that there were many results in~\Cref{se:uniqueness} that greatly benefited from using the fractional Laplacian specifically. This was due to the use of the Riesz potential. If analogous results could be found for the inverse of these more general operators $\mathcal{L}$, then it would be possible to extend these results to those operators. It might also be interesting to study this problem for the operators found in the very recent paper~\cite{WidderIrene}.

\textsc{The nonlinearity:} Another possible path to expand the previous results is to generalize the nonlinearity used here, $\varphi(s) = s^m$, to an increasing, non-Lipschitz function satisfying $\varphi(0) = 0$ and maybe some other additional conditions, just as the ones found in~\cite{DahlbergKenigFast}.

\textsc{Growing solutions for the slow diffusion equation:} In~\cite{BonforteVazquezQuantitativeEstimates} and in the present paper we consider solutions that may grow at infinity for the fractional fast diffusion equation. Recently, growing solutions for the fractional $p$-Laplacian for $1<p<2$ have also been studied, see~\cite{VazquezgrowingPLaplacian}. For the fractional slow diffusion equation, we have not found anything in the literature regarding this subject. Considering that in the local slow diffusion case growing solutions have been extensively studied, see~\cite{VazquezPorousMedium}, this is very interesting territory yet to be explored.  

\textsc{Smoothing effects:} In~\cite{HerreroPierreFastRegularizing} (and for more general nonlinearities in~\cite{DahlbergKenigFast}), a local smoothing effect for the local fast diffusion equation is proven when $m>m_c$. Solutions become instantaneously~$L^\infty_\text{loc}$ starting from $L^1_\text{loc}$ initial data. This is not studied in this work. It would be interesting to check if weighted solutions satisfy a similar estimate. 

\section*{Acknowledgements}
I am immensely grateful to my PhD advisors, \textsc{A. de Pablo} and \textsc{F. Quirós}, for their constant support, helpful advice and fruitful discussions on the topic.

I would also like to thank \textsc{M. Bonforte} for his useful feedback, which enhanced the clarity of this work. 

This research has been supported by grants PID2023-146931NB-I00, CEX2023-001347-S and RED2022-134784-T, all of them funded by MCIU/AEI/10.13039/501100011033.



\begin{thebibliography}{99}

    \bibitem{AronsonNonnegativeSolutions}
    {\sc Aronson, D.~G.}
    \newblock Non-negative solutions of linear parabolic equations.
    \newblock {\em Ann. Scuola Norm. Sup. Pisa Cl. Sci. (3) 22\/} (1968), 607--694.
    \newblock Addendum: {\em Ann. Scuola Norm. Sup. Pisa Cl. Sci. (3) 25\/} (1971), 221--228.
    
    
    \bibitem{AronsonWidderInversion}
    {\sc Aronson, D.~G.}
    \newblock Widder's inversion theorem and the initial distribution problems.
    \newblock {\em SIAM J. Math. Anal. 12}, 4 (1981), 639--651.
    
    \bibitem{AronsonCaffarelli1983_InitialTrace}
    {\sc Aronson, D.~G., and Caffarelli, L.~A.}
    \newblock The initial trace of a solution of the porous medium equation.
    \newblock {\em Trans. Amer. Math. Soc. 280}, 1 (1983), 351--366.
    
    \bibitem{BanachSaks}
    {\sc Banach, S., and Saks, S.}
    \newblock Sur la convergence forte dans les champs $L^p$.
    \newblock {\em Studia Mathematica 2}, 1 (1930), 51--57.
    
    \bibitem{BarriosPeralSoriaValdinoci2024_NonlocalWidder}
    {\sc Barrios, B.~n., Peral, I., Soria, F., and Valdinoci, E.}
    \newblock A {W}idder's type theorem for the heat equation with nonlocal diffusion.
    \newblock {\em Arch. Ration. Mech. Anal. 213}, 2 (2014), 629--650.
    
    \bibitem{BenilanCrandallPierre1984_InitialValue}
    {\sc B\'{e}nilan, P., Crandall, M.~G., and Pierre, M.}
    \newblock Solutions of the porous medium equation in {${\bf R}\sp{N}$} under optimal conditions on initial values.
    \newblock {\em Indiana Univ. Math. J. 33}, 1 (1984), 51--87.
    
    \bibitem{BonforteSireVazquez2017_OptimalWidder}
    {\sc Bonforte, M., Sire, Y., and V\'{a}zquez, J.~L.}
    \newblock Optimal existence and uniqueness theory for the fractional heat equation.
    \newblock {\em Nonlinear Anal. 153\/} (2017), 142--168.
    
    \bibitem{BonforteVazquezQuantitativeEstimates}
    {\sc Bonforte, M., and V\'{a}zquez, J.~L.}
    \newblock Quantitative local and global a priori estimates for fractional nonlinear diffusion equations.
    \newblock {\em Adv. Math. 250\/} (2014), 242--284.
    
    \bibitem{ChenZhang2016_Parametrix}
    {\sc Chen, Z.-Q., and Zhang, X.}
    \newblock Heat kernels and analyticity of non-symmetric jump diffusion semigroups.
    \newblock {\em Probab. Theory Related Fields 165}, 1-2 (2016), 267--312.
    
    \bibitem{ChenZhang2016_ParametrixWithTime}
    {\sc Chen, Z.-Q., and Zhang, X.}
    \newblock Heat kernels for time-dependent non-symmetric stable-like operators.
    \newblock {\em J. Math. Anal. Appl. 465}, 1 (2018), 1--21.
    
    \bibitem{CordobaCordobaKatoInequality}
    {\sc C\'ordoba, A., and C\'ordoba, D.}
    \newblock A pointwise estimate for fractionary derivatives with applications to partial differential equations.
    \newblock {\em Proc. Natl. Acad. Sci. USA 100}, 26 (2003), 15316--15317.
    
    \bibitem{CrowleyTrick}
    {\sc Crowley, A.~B.}
    \newblock On the weak solution of moving boundary problems.
    \newblock {\em J. Inst. Math. Appl. 24}, 1 (1979), 43--57.
    
    \bibitem{DahlbergKenigPorous}
    {\sc Dahlberg, B. E.~J., and Kenig, C.~E.}
    \newblock Nonnegative solutions of the porous medium equation.
    \newblock {\em Comm. Partial Differential Equations 9}, 5 (1984), 409--437.
    
    \bibitem{DahlbergKenigGeneralizedPorous}
    {\sc Dahlberg, B. E.~J., and Kenig, C.~E.}
    \newblock Nonnegative solutions of generalized porous medium equations.
    \newblock {\em Rev. Mat. Iberoamericana 2}, 3 (1986), 267--305.
    
    \bibitem{DahlbergKenigFast}
    {\sc Dahlberg, B. E.~J., and Kenig, C.~E.}
    \newblock Nonnegative solutions to fast diffusions.
    \newblock {\em Rev. Mat. Iberoamericana 4}, 1 (1988), 11--29.
    
    \bibitem{delTesoEndalEspen2017_DistUniqueness}
    {\sc del Teso, F., Endal, J.~r., and Jakobsen, E.~R.}
    \newblock Uniqueness and properties of distributional solutions of nonlocal equations of porous medium type.
    \newblock {\em Adv. Math. 305\/} (2017), 78--143.
    
    \bibitem{GrilloMuratoriPunzo2015_Measure}
    {\sc Grillo, G., Muratori, M., and Punzo, F.}
    \newblock Fractional porous media equations: existence and uniqueness of weak solutions with measure data.
    \newblock {\em Calc. Var. Partial Differential Equations 54}, 3 (2015), 3303--3335.
    
    \bibitem{GrilloMuratoriPunzo2020_VeryWeak}
    {\sc Grillo, G., Muratori, M., and Punzo, F.}
    \newblock Uniqueness of very weak solutions for a fractional filtration equation.
    \newblock {\em Adv. Math. 365\/} (2020), 107041, 35.

    \bibitem{WidderIrene}
    {\sc Gonzálvez, I., Quirós, F., Soria, F., and Vondraček, Z.}
    \newblock On the nonlocal heat equation for certain Lévy operators and the uniqueness of positive solutions.
    \newblock Available at \href{https://arxiv.org/abs/2504.04246}{arXiv:2504.04246} [math.ap], 2025.
    
    \bibitem{HerreroPierreFastRegularizing}
    {\sc Herrero, M.~A., and Pierre, M.}
    \newblock The {C}auchy problem for {$u_t=\Delta u^m$} when {$0<m<1$}.
    \newblock {\em Trans. Amer. Math. Soc. 291}, 1 (1985), 145--158.

    \bibitem{Krzyzanski}
    {\sc Krzyzanski, M.}
    \newblock Schr\"odinger operators with singular potentials.
    \newblock {\em Israel J. Math. 13\/} (1972), 135--148.
    
    \bibitem{LandkofPotentialbook}
    {\sc Landkof, N.~S.}
    \newblock {\em Foundations of modern potential theory}.
    \newblock Die Grundlehren der mathematischen Wissenschaften, Band 180. Springer-Verlag, New York-Heidelberg, 1972.
    \newblock Translated from the Russian by A. P. Doohovskoy.

    \bibitem{dePabloQuiros2011}
    {\sc de~Pablo, A., Quir\'{o}s, F., Rodr\'{\i}guez, A., and V\'{a}zquez, J.~L.}
    \newblock A fractional porous medium equation.
    \newblock {\em Adv. Math. 226}, 2 (2011), 1378--1409.
    
    \bibitem{dePabloQuiros2012}
    {\sc de~Pablo, A., Quir\'{o}s, F., Rodr\'{\i}guez, A., and V\'{a}zquez, J.~L.}
    \newblock A general fractional porous medium equation.
    \newblock {\em Comm. Pure Appl. Math. 65}, 9 (2012), 1242--1284.

    \bibitem{dePabloQuiros2016}
    {\sc de~Pablo, A., Quir\'{o}s, F., and Rodr\'{\i}guez, A.}
    \newblock Nonlocal filtration equations with rough kernels.
    \newblock {\em Nonlinear Anal. 137\/} (2016), 402--425.
    
    \bibitem{dePabloQuiros2018}
    {\sc de~Pablo, A., Quir\'{o}s, F., and Rodr\'{\i}guez, A.}
    \newblock Regularity theory for singular nonlocal diffusion equations.
    \newblock {\em Calc. Var. Partial Differential Equations 57}, 5 (2018), Paper No. 136, 14.
    
    \bibitem{PositivityAndAsymptoticBehaviour}
    {\sc de~Pablo, A., Quirós, F., and Ruiz-Cases, J.}
    \newblock Positivity and asymptotic behaviour of solutions to a general nonlocal fast diffusion equation.
    \newblock Available at \href{https://arxiv.org/abs/2403.06647}{arXiv:2403.06647} [math.ap], 2024.
    
    \bibitem{PerthameQuirosVazquez_HeleShaw}
    {\sc Perthame, B., Quir\'{o}s, F., and V\'{a}zquez, J.~L.}
    \newblock The {H}ele-{S}haw asymptotics for mechanical models of tumor growth.
    \newblock {\em Arch. Ration. Mech. Anal. 212}, 1 (2014), 93--127.
    
    \bibitem{PierreUniquenessMeasure}
    {\sc Pierre, M.}
    \newblock Uniqueness of the solutions of {$u_{t}-\Delta \varphi (u)=0$} with initial datum a measure.
    \newblock {\em Nonlinear Anal. 6}, 2 (1982), 175--187.

    \bibitem{Steinconstant}
    {\sc Stein, E.~M.}
    \newblock The characterization of functions arising as potentials.
    \newblock {\em Bull. Amer. Math. Soc. 67}, 1 (1961), 102--104.
    
    \bibitem{VazquezPorousMedium}
    {\sc V\'{a}zquez, J.~L.}
    \newblock {\em The porous medium equation}.
    \newblock Oxford Mathematical Monographs. The Clarendon Press, Oxford University Press, Oxford, 2007.
    \newblock Mathematical theory.
    
    \bibitem{VazquezBarenblattnolocal}
    {\sc V\'{a}zquez, J.~L.}
    \newblock Barenblatt solutions and asymptotic behaviour for a nonlinear fractional heat equation of porous medium type.
    \newblock {\em J. Eur. Math. Soc. (JEMS) 16}, 4 (2014), 769--803.

    \bibitem{VazquezgrowingPLaplacian}
    {\sc V\'{a}zquez, J.~L.}
    \newblock Growing solutions of the fractional $p$-Laplacian equation in the fast diffusion range.
    \newblock {\em Nonlinear Anal. 214}, 4 (2022).

    \bibitem{WidderPositiveTemperatures}
    {\sc Widder, D.~V.}
    \newblock Positive temperatures on an infinite rod.
    \newblock {\em Trans. Amer. Math. Soc. 55\/} (1944), 85--95.
    
    \bibitem{WidderHeatEquation}
    {\sc Widder, D.~V.}
    \newblock {\em The heat equation}, vol.~Vol. 67 of {\em Pure and Applied Mathematics}.
    \newblock Academic Press [Harcourt Brace Jovanovich, Publishers], New York-London, 1975.
    
    \bibitem{WilcoxPositiveTemperatures}
    {\sc Wilcox, C.~H.}
    \newblock Positive temperatures with prescribed initial heat distributions.
    \newblock {\em Amer. Math. Monthly 87}, 3 (1980), 183--186.
    
\end{thebibliography}
\end{document}